\documentclass[graybox]{svmult}
\usepackage{type1cm}        % activate if the above 3 fonts are
\usepackage{makeidx}         % allows index generation
\usepackage{graphicx}        % standard LaTeX graphics tool
\usepackage{tikz}
\usepackage{multicol}        % used for the two-column index
\usepackage[bottom]{footmisc}% places footnotes at page bottom
\usepackage{amsmath,amssymb,wasysym}

\usepackage{newtxtext}       % 
\usepackage[varvw]{newtxmath}       % selects Times Roman as basic font

\newcommand\FD{\mathrm{FD}}
\newcommand\FT{\mathrm{FT}}
\newcommand\FQ{\mathrm{FQ}}
\newcommand\FH{\mathrm{FH}}
\newcommand\FP{\mathrm{FP}}

\makeindex   

\begin{document}

\title*{Farey graphs and geodesic expansions of complex continued fractions}
\titlerunning{Farey graphs and geodesic expansions of complex continued fractions} 
\author{Hitoshi Nakada, Rie Natsui, and J\"org Thuswaldner}
% Use \authorrunning{Short Title} for an abbreviated version of
% your contribution title if the original one is too long
\institute{Hitoshi Nakada \at Department of Mathematics, Keio University, Kohoku-ku, Yokohama, Japan\\ 
\email{nakada@math.keio.ac.jp}
\and Rie Natsui \at Department of Mathematics, Physics and Computer Science, 
Japan Women's University, Bunkyo-ku, Tokyo, Japan\\ 
\email{natsui@fc.jwu.ac.jp}
\and J\"org Thuswaldner \at Chair of Mathematics, Statistics, and Geometry, Montanuniversit\"at Leoben, A-8700 Leoben, Austria \\ \email{joerg.thuswaldner@unileoben.ac.at}}
%
% Use the package "url.sty" to avoid
% problems with special characters
% used in your e-mail or web address
%
\maketitle

%\begin{center}
    \vskip -2.5cm
    {\it Dedicated to Vitaly Bergelson on the occasion of his 75th birthday}
%\end{center}
\vspace{2cm}

\abstract{We discuss complex Farey graphs for the Euclidean imaginary quadratic number fields $\mathbb Q(\sqrt{-d})$, $d\in\{1, 2, 3, 7, 11\}$. We  study hyperbolic versions of A.~Schmidt's Farey polygons living in $3$-dimensional hyperbolic space~$\mathbb{H}^3$. Using these Farey polygons we recover tessellations of the hyperbolic plane $\mathbb{H}^2$ that are defined by the action of the Hecke groups $H_4$ and $H_6$ and have been studied earlier by I.~Short and M.~Walker. Moreover, hyperbolic Farey polygons allow us to define polyhedra that induce Farey tessellations of $\mathbb{H}^3$ by the action of certain Bianchi groups. Using complex Farey graphs we consider geodesic complex continued fraction expansions. Our method provides a different and more general approach as the one from the discussion by M.~Hockman. 
%200 Words at most
}

\section*{Introduction}
\label{sec:introduction}

The main purpose of this paper is to give a brief explanation of some relations between complex versions of the Farey graph, the Farey tessellation, and complex continued fractions for  Euclidean imaginary quadratic number fields. In the case of the Gaussian field, Hockman \cite{H-1,H-2} already discussed this topic. We consider all Euclidean imaginary quadratic number fields by a different approach.  

The notion of Farey sequence, Farey graph, and Farey tessellation, as well as their connection with continued fraction expansions is well-studied in the field of the rational numbers~$\mathbb{Q}$, see {\em e.g.}, \cite{B-H-S,J-S-W,M}.  We give a brief sketch of this classical situation. Usually, one starts with $0 = \tfrac{0}{1}$ and $1=\tfrac{1}{1}$, which constitute the {\em Farey sequence} of level~$1$.  Then $\tfrac{0}{1}, \,\, \tfrac{1}{2} = \frac{0 + 1}{1+1}, \,\, \tfrac{1}{1}$ is 
of level~$2$, and $\tfrac{0}{1}, \,\, \tfrac{1}{3} = \tfrac{0+1}{1+2}, \,\, \tfrac{1}{2}, \,\, \tfrac{2}{3} = \tfrac{1+1}{2+ 1}, \,\, \tfrac{1}{1}$ is of level~$3$.  We define the Farey sequence of level~$k$ recursively from that of level~$k-1$ by inserting the {\em mediant fraction} $\tfrac{a + c}{b + d}$ between consecutive rational numbers $\tfrac{a}{b}$ and $\tfrac{c}{d}$ in the sequence of level~$k-1$.  Every rational number in $[0, 1]$ appears in the Farey sequence of level~$k$ for some $k \ge 1$, and each appearing fraction $\tfrac ab$ satisfies $\gcd(a,b)=1$.  It is easy to extend Farey sequences to all rational numbers by taking $\mathbb Z$ as the Farey sequence of level~$1$ and iterating in the same way.  One can check inductively that for fractions $\tfrac{a}{b}$ and $\tfrac{c}{d}$ that are ``Farey neighbors'' in some level~$k$ we always have $|bc-ad|=1$.

We now discuss the idea of Farey neighbors from a different point of view. Setting $i=\sqrt{-1}$ let $\mathbb H^{2}=\{ z = x + yi: x, y \in \mathbb R \,\,\mbox{and} \,\,y> 0\}$ be the hyperbolic plane with the Poincar\'{e} metric $ds^2 = \tfrac{dx^2+dy^2}{y}$.  
The boundary of $\mathbb H^{2}$, denoted by $\partial \mathbb H^{2}$, is $\mathbb R \cup \{\infty\}$.  A geodesic curve $S$ is either a half-circle or a straight half-line, perpendicular to the $\partial \mathbb H^{2}$, {\em i.e.},
\[
\begin{split}
S &= \big\{x+ yi\in \mathbb{C}\,: \, y>0\text{ and }  |x+yi - a|^{2} = r^{2}\big\} \;  \text{for some $a\in \mathbb{R}$ and $r>0$, or} \\
S &= \{x + yi \in \mathbb{C} \,:\, y>0\} \;  \text{for some $x\in \mathbb{R}$}.  
\end{split}
\]
We denote $S$ by $(a - r, a + r)$ in the first case and $(\infty,  x)$ in the second case. We interpret 
\[
{\rm PSL}(2, \mathbb Z) = \left\{\begin{pmatrix} a & b \\c & d \end{pmatrix} : 
a, b, c, d \in \mathbb Z, ad-bc = 1 \right\} /\left\{ \begin{pmatrix}\pm 1 & 0 \\ 0 &\pm 1 
\end{pmatrix} \right\}
\] 
as a set of M\"obius transformations defined by  $A(z) = \tfrac{a z + b}{cz + d}$ for $A = \begin{pmatrix}a & b \\ c & d \end{pmatrix}$ 
(note that $A$ and $-A$ define the same M\"obius transformation and and we may choose either of these representatives to define the fractional transformation). 

The {\em Farey graph} is the graph with vertices in $\mathbb Q^{*} = \mathbb Q \cup \{\infty\}$ and set of edges $\mathcal E = \{A(\infty) \to A(0) : A  \in {\rm PSL}(2, \mathbb Z)\}$. Because $\det A = 1$ one can show that $\frac ab, \frac cd \in \mathbb{Q}$ are connected by an edge if and only if $|bc-ad|=1$. Thus  $\frac ab$ and $\frac cd$ are connected by an edge in $\mathcal E$ if and only if $\frac ab$ and $\frac cd$ are Farey neighbors in a Farey sequence of some level $k$. In this paper, we regard the Farey graph just as the set of edges $\mathcal E$ and often (if the direction of the edge plays no role) interpret $\alpha\to\beta\in\mathcal{E}$ as the geodesic $(\alpha,\beta)$ in $\mathbb{H}^2$. Since the edges $\infty \to 0$ and $0 \to \infty$ correspond to the identity matrix and to   $\begin{pmatrix} 0 & -1 \\ 1 & 0 \end{pmatrix}$, respectively, $z_{1} \to  z_{2} \in \mathcal E$ is equivalent to $z_{2} \to  z_{1} \in \mathcal E$.\footnote{Although this means that $\mathcal{E}$ is an undirected graph it is convenient for us to use directed edges.}  Consider the hyperbolic triangle $\mathcal C = \left\{z = x + yi : 0 < x <1, \,\, y > 0, \,\, (x -\tfrac{1}{2})^{2} + y^{2} > \tfrac{1}{2} \right\}$. It is well known 
that 
\[
\bigcup_{A \in {\rm PSL}(2, \mathbb Z)} A \cdot {\mathcal C}^{\mathrm{cl}}
= \bigcup_{A \in {\rm PSL}(2, \mathbb Z)} \{ A(z) : z \in {\mathcal C}^{\mathrm{cl}} \} = 
\mathbb H^{2}, 
\] 
where ${B}^{\mathrm{cl}}$ is the closure of a given set $B$.  Also we see that $\mathcal C \cap A \mathcal C$ is either equal to $\mathcal{C}$ or to the empty set. In other words, $\mathcal C$ {\em tessellates} $\mathbb{H}^2$ by the action of the group $ {\rm PSL}(2, \mathbb Z)$. Because the edges of $\mathcal{E}$ are equal to the edges of the hyperbolic triangles $A\cdot C$, $A\in {\rm PSL}(2, \mathbb Z)$, this shows that for any $A_{1}, \,A_{2} \in {\rm PSL}(2, \mathbb Z)$ the intersection $(A_{1}(\infty), A_{1}(0)) \cap (A_{2}(\infty),A_{2}(0))$ is either equal to $(A_{1}(\infty), A_{1}(0))$ or empty,\footnote{Note that $\partial\mathbb{H}^2$ is not part of the geodesic curves $(\alpha,\beta)$ forming the edges $\alpha\to \beta$ of $\mathcal{E}$.} thus the Farey graph is a planar graph. An illustration of a part of the Farey graph, which also shows
\begin{figure}[h]
\includegraphics[width=0.9\textwidth]{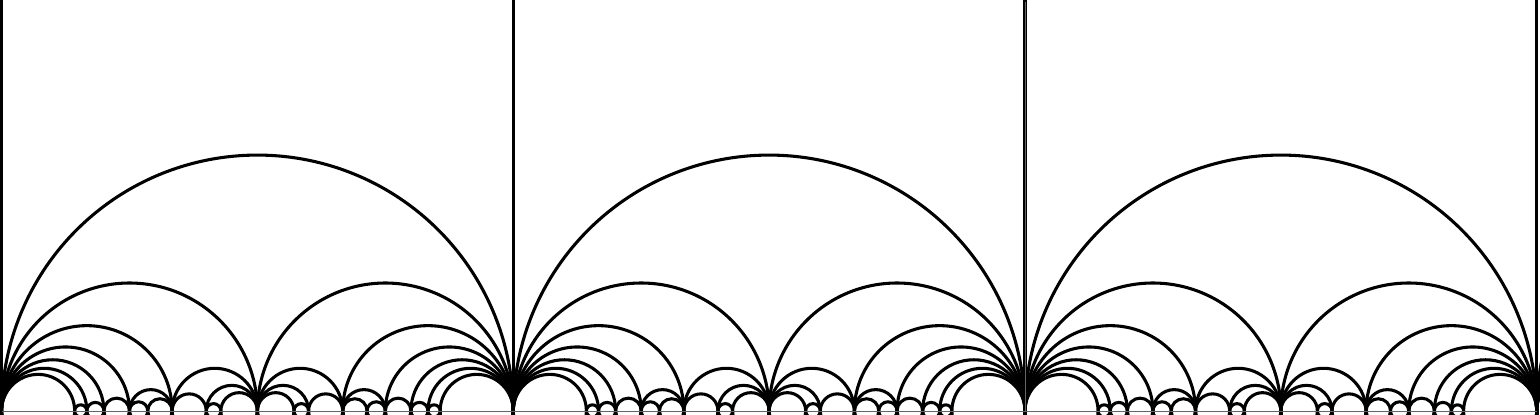} \qquad
\caption{The classical Farey tessellation of $\mathbb{H}^2$.
\label{fig:ClassicalFarey}}
\end{figure}
the Farey tessellation $\mathcal{T}=\{A\cdot C\,:\, A\in {\rm PSL}(2, \mathbb Z)\}$ of $\mathbb{H}^2$ into hyperbolic triangles, is provided in Figure~\ref{fig:ClassicalFarey}. It is well known that this tessellation is strongly related to the tessellaton of $\mathbb{H}^{2}$ by the orbit of a fundamental domain of the modular group (see {\em e.g.} Series~\cite{Series}).

For a rational number $r = \tfrac{p}{q}\in \mathbb{Q}$ with $\gcd(p,q)=1$, the {\em Ford circle} $\mathcal F(r)$ associated with $r$ is the Euclidean circle defined by 
\[
\mathcal F(r) = \Bigg\{x+yi \in \mathbb{C} :  \,\,y \ge 0, \,\, 
\bigg|x+yi - \bigg(r + \frac{i}{2 q^{2}}\bigg)
\bigg| = \frac{1}{2 q^{2}} \Bigg\},
\]
\begin{figure}[h]
\includegraphics[width=0.9\textwidth]{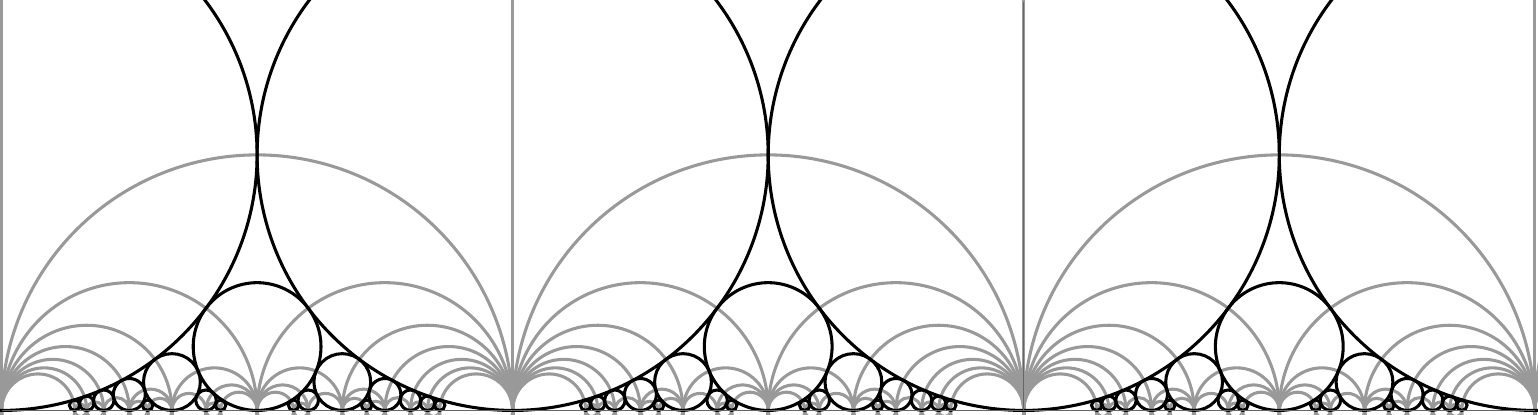} \qquad
\caption{Ford circles (black) and their interplay with the Farey graph (shaded).
\label{fig:ClassicalFord}}
\end{figure}
and $\mathcal F(\infty) = \{ z = x +i : x \in \mathbb R \} \cup \{\infty\}$.   
We refer to Ford \cite{F} for the original idea of the Ford circle.
One can show that the set of Ford circles is equal to $\{A\cdot \mathcal F(\infty) :  A \in {\rm PSL}(2, \mathbb Z) \}$. The (degenerate) Ford circle $\mathcal{F}(\infty)=\mathcal{F}\big(\frac10\big)$ intersects $\mathcal F\big(\frac01\big)$ in~$i$, which is contained in the geodesic $\big(\frac10,\frac01\big)$.  Because for each edge $\tfrac ab \to \tfrac cd\in\mathcal{E}$ there is $A = \begin{pmatrix}a & c \\ b & d \end{pmatrix} \in {\rm PSL}(2, \mathbb Z)$ such that $\tfrac ab=A(\infty)$ and $\tfrac cd=A(0)$, we see that $\tfrac ab \to \tfrac cd\in\mathcal{E}$ implies that  $A\cdot\mathcal{F}\big(\frac10\big)=\mathcal F\big(\tfrac{a}{b}\big)$ and  $A\cdot\mathcal{F}\big(\frac01\big)=\mathcal F\big(\tfrac{c}{d}\big)$ are tangent and intersect in a point contained in the geodesic $\big(\tfrac{a}{b}, \tfrac{c}{d}\big)$. Moreover,  if 
$\tfrac ab \to \tfrac cd\not\in\mathcal{E}$, then one can check by simple calculation that $\mathcal F\big(\tfrac{a}{b}\big)$ and $\mathcal F\big(\tfrac{c}{d}\big)$ are disjoint. See Figure~\ref{fig:ClassicalFord} for an illustration of this interplay between Ford circles and the Farey graph.

It is well known that the Farey graph, the Farey tessellation, and Ford circles have tight relations to continued fraction algorithms. In particular, one can prove that a continued fraction expansion
\begin{equation}\label{eq:cf}
[a_0;a_1,\ldots,a_n]=
\displaystyle a_0+ \frac1{a_1+ 
 \displaystyle \frac1{\ddots + 
 \displaystyle \frac1{a_n}}}
\end{equation}
with $a_0,a_1,\ldots, a_n \in \mathbb {Z}$ corresponds to a walk\footnote{Recall that a {\em walk} in a graph is a consecutive sequence of edges. A {\em path} is a walk in which no vertex occurs more than one time. The shortest walk from a vertex $a$ to a vertex $b$ (which has to be a path) is called a {\em geodesic path}. Keep in mind  the ambigous use of the word {\em geodesic} throughout the paper.}
of length $n+1$ in the Farey graph leading from $\infty$ to $x=[a_0;a_1,\ldots, a_n]$. This is explored in  Beardon {\em et al.}~\cite{B-H-S}. It is natural to ask which continued fraction expansion is the shortest in the sense that it corresponds to the shortest walk from $\infty$ to $x$ in the Faray graph. Such continued fraction expansions are called {\em geodesic continued fraction expansions}. In \cite{B-H-S}, rational geodesic continued fraction expansions are characterized.

In the present paper we extend these ideas to Euclidean imaginary quadratic number fields. This will lead to new versions of the Farey graph that live in the 3-dimensional hyperbolic upper half space $\mathbb{H}^3$. Moreover, Ford circles will become Ford spheres and the associated Farey tessellation is a tessellation of $\mathbb{H}^3$ by hyperbolic polyhedra under the action of Bianchi groups of Euclidean imaginary quadratic number fields (see \cite{H-1,H-2}, where aspects of the Gaussian field are treated; a different generalization of Farey graphs is studied in~\cite{FKST}). This Farey tessellation is related to certain ``modular'' tessellations of $\mathbb{H}^3$ by Bianchi groups as studied for instance by Swan~\cite{Swan:71}. The faces of the hyperbolic polyhedra are intimately related to the Farey polygons studied by Schmidt~\cite{SchmidtAsmus:69,SchmidtAsmus:78}. Indeed, as discussed by Nakada~\cite{N-Monatshefte,N}, the ``subdivision'' of Schmidt's Farey polygons becomes more natural when viewed in the hyperbolic upper half space $\mathbb{H}^3$.  
We also refer to \cite{C, hatcher1983hyperbolic, Marklof:10, S-vS-Z, Stange:18, Stange2014} for some related topics. 

Following \cite{B-H-S,H-1,H-2}, we will use Farey graphs and Ford spheres to establish some results on geodesic complex continued fraction expansions in Euclidean imaginary quadratic number fields. In particular, we are able to show that certain continued fraction algorithms that go back to Hurwitz~\cite{H:1887} and Lakein~\cite{lakein1973approximation} always produce geodesic continued fraction expansions in the sense mentioned above.

Nakada~\cite{N2} applied the results on geodesic continued fraction expansions of \cite{B-H-S} in order to solve a conjecture of Kraaikamp {\em et al.}~\cite{KSS} on the entropy of $\alpha$-continued fraction algorithms. In particular, it is shown in \cite[Theorem~1]{N2} that the expansions produced by the $\alpha$-continued continued fraction map $T_\alpha(x) = \big| \frac1x\big| - \big\lfloor \big| \frac1x\big| + 1 - \alpha \big\rfloor$ are always geodesic if $\big(\tfrac{\sqrt 5 - 1}{2}\big)^2 \le \alpha \le \tfrac{\sqrt 5 - 1}{2}$. This shows that our results have the potential to be applied to related questions in the complex case. In a forthcoming work we want to come back to such applications.

\section{The generalized setting in the imaginary quadratic case}\label{sec:2}

We now want to set up the analogs of the Farey graph and Ford spheres in the case of imaginary quadratic number fields.
Let $d\ge 1$ be a squarefree integer and consider the number field $\mathbb{Q}(\sqrt{-d})$. If we put
\[
\omega=\omega_d=\begin{cases}
\sqrt{-d}, & \hbox{for } d \not\equiv 3\bmod 4, \\
\frac{1+\sqrt{-d}}2, & \hbox{for } d \equiv 3\bmod 4,
\end{cases}
\] 
the ring of algebraic integers in $\mathbb Q(\sqrt{-d})$ is given by 
$\mathbb{Z}[\omega_d]$. The $3$-dimensional hyperbolic upper half space is defined by $\mathbb H^{3} = \{x_{1} + x_{2}{i} + x_{3}{j} : 
x_{1}, x_{2}, x_{3} \in \mathbb R, \,\, x_{3}> 0 \}$, where $i$ and $j$ are elements of the usual basis $\{1, i, j, k\}$ of the quaternion field.   Note that $\mathbb Q(\sqrt{-d})$ is Euclidean if and only if $d\in \{1, 2, 3, 7, 11\}$.   We define the Farey graph for $\mathbb Q(\sqrt{-d})$ for an arbitrary squarefree integer $d\in\mathbb{N}$, however, the Euclidean case will be of particular interest.

${\rm PSL}(2, \mathbb{Z}[\omega_d])$ denotes the {\em Bianchi group} of M\"obius transformations with coefficients in $\mathbb{Z}[\omega_d]$ defined on  $\mathbb H^{3}$, whose matrix representation is given by 
\[
{\rm PSL}(2, \mathbb{Z}[\omega_d]) = {\rm SL}(2,  \mathbb{Z}[\omega_d])/ 
\left\{ \begin{pmatrix} \pm 1 & 0 \\ 0 & \pm 1 \end{pmatrix}\right\}, 
\]
where ${\rm SL}(2, \mathbb{Z}[\omega_d])$ is the group of $2\times 2$-matrices of determinant $1$ with elements in $ \mathbb{Z}[\omega_d]$.
Thus, for $A= \begin{pmatrix} \alpha_{1 1} & \alpha_{1 2} \\ \alpha_{2 1}& \alpha_{2 2} \end{pmatrix}  \in 
{\rm SL}(2, \mathbb Z[\omega_{d}])$ we get the M\"obius transformation 
\[
A(w) = (\alpha_{1 1}w + \alpha_{1 2})(\alpha_{2 1} w + \alpha_{2 2})^{-1} \quad \mbox{for} \quad w \in \mathbb H^{3}.
\] 
This can also be regarded as a M\"obius transformation on $\mathbb{C}$. We note that if we consider a linear fractional transformation for complex numbers, there is no difference between $A$ and $iA$ as M\"obius transformations on $\mathbb{C}$ but for quaternion numbers, $A$ and $iA$ are not the same.  We refer to Ahlfors~\cite{A} for a detailed discussion of $\mathbb{H}^3$. For $d \in\{1,3\}$, we have to be careful concerning the definition of the M\"obius transformation because $\pm1$ are not the only units of $\mathbb{Z}[\omega_d]$ in these cases.  
For the Gaussian case $d=1$, if $A \in {\rm SL}(2, \mathbb{Z}[\omega_1])$, the M\"obius transformation $A$ maps $\mathbb H^{3}$ onto itself, however, $iA$ has $\det (iA) = -1$ and maps $\mathbb H^{3}$ to the lower half space $\{ x_{1} + x_{2} i + x_{3} j : x_{1}, x_{2}, x_{3} \in \mathbb R,\, x_{3} <0 \}$. This shows that if $\det A = -1$,  then $iA \in {\rm SL}(2, \mathbb{Z}[\omega_1])$ and $iA$ maps $\mathbb H^{3}$ onto itself.  For example, if $A = \begin{pmatrix} -1 & 0 \\ 0 &1 \end{pmatrix}$, then  $A(j) = -j$ but $iA(j)= \begin{pmatrix} -i & 0 \\ 0 &i \end{pmatrix}(j) = j$. 
In the Eisenstein case $d=3$ the set of units is  $\{ \omega_{3}^{\ell}\,:\, 0 \le \ell \le 5\}$. Thus, if  $A = \begin{pmatrix} \alpha_{11} &\alpha_{12} \\ \alpha_{21} & \alpha_{22} \end{pmatrix}$ has elements in $\mathbb{Z}[\omega_3]$ and $\det A \in\{ \omega_{3}^{2}, \omega_{3}^{4}\}$, then we have either $\omega_{3}^{2} A\in {\rm SL}(2,\mathbb{Z}[\omega_3])$ or $\omega_{3} A \in {\rm SL}(2,\mathbb{Z}[\omega_3])$.   For example, if $A = \begin{pmatrix} \omega_{3}^{2} & 0 \\ 0 &1 \end{pmatrix}$, then  $A(j) = \tfrac{-j + \sqrt{-3}j}{2}$ and $\omega_{3}^{2}A(j)= \begin{pmatrix} \omega_{3}^{4}  & 0 \\ 0 &\omega_{3}^{2} \end{pmatrix}(j) = j$.   

\begin{remark} 
For $A\in \mathrm{PSL}(2, \mathbb{Z}[\omega_d])$ depending on the context, we regard $A$ as a matrix and 
also a linear fractional transformation.  For instance, in Definition~\ref{def:fareytriangle} below  $A$ is first used as a matrix and then a linear fractional transformation. Since the role of $A$ will always be clear from the context, there will be no risk of confusion.

Moreover, in the sequel we will use the notation $A\cdot M = \{ A(z) : z \in M\}$ for $M\subset \mathbb{H}^3$. 
\end{remark}

\begin{definition}[Generalization of Farey Graph]
For each squarefree $d\in\mathbb{N}$, the {\em Farey graph}  $\mathcal E_{d}$ is the graph with vertices in ${\rm PSL}(2,  \mathbb{Z}[\omega_d]) (\infty)$  whose set of edges is $\big\{A(\infty) \to A(0)\,: \, A \in {\rm PSL}(2, \mathbb{Z}[\omega_d])\big\}$. 
\end{definition}
%%%%%%%%%%%%%%%%%%%%%%
%%%%%%%%%%%%%%%%%%%%%%
\begin{remark}  In general, if $d$ is a  squarefree integer, then 
by a classical result of Bianchi and Hurwitz, ${\rm PSL}(2, \mathbb{Z}[\omega_d]) (\infty) = \mathbb Q(\sqrt{-d}) \cup \{\infty\} $ if and 
only if the class number of $\mathbb Q(\sqrt{-d})$ is one (see for instance Swan~\cite[Proposition~3.10 and the remark after it]{Swan:71}). Therefore, in the case of the Euclidean imaginary quadratic number fields that are of interest to us, the set of vertices of the Farey graph  $\mathcal E_{d}$ equals $\mathbb Q(\sqrt{-d}) \cup \{\infty\}$.
\end{remark}

The Farey graph $\mathcal E_{d}$ is a directed graph, the direction being defined by $A(\infty)\to A(0)$. However, usually the direction plays no role and   we interpret $\alpha \to \beta$ as the geodesic line $(\alpha,\beta)$ connecting $\alpha$  and $\beta$ in $\mathbb H^{3}$. Note that $(\alpha,\beta)$ is the (possibly degenerate) semi-circle from $\alpha$ to $\beta$ perpendicular to the complex plane $\partial\mathbb H^{3}$. The following result shows that in the Euclidean case the edges of $\mathcal E_{d}$ satisfy the analog of the ``Farey neighbor relation'' from the rational case.

%%%%%%%%%%%%%%%%%%%%%%%%
%%%%%%%%%%%%%%%%%%%%%%%%
\begin{proposition}\label{prop:EdCrit}
For $d\in\{1,2,3,7,11\}$ let $\tfrac{p_{k}}{q_{k}} \in \mathbb Q(\sqrt{-d}) \cup \{\infty\}$ with $\gcd(p_{k}, q_{k})=1$ for $k\in\{1,2\}$.  Then 
$\tfrac{p_{1}}{q_{1}}\to \tfrac{p_{2}}{q_{2}}$ is an edge of $\mathcal E_{d}$ if and only if $|p_{1}q_{2} - p_{2}q_{1}| = 1$ 
\end{proposition}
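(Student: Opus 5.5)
Both directions come down to the determinant of the matrix whose columns are representatives of the two endpoints. The two ingredients are that reduced representatives are unique up to units, and that in these fields every unit has absolute value $1$.

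We fix conventions first. We write $\infty=\tfrac{1}{0}$, and a representation $\tfrac pq$ with $\gcd(p,q)=1$ means that $p,q\in\mathbb Z[\omega_d]$ generate the unit ideal. Since $\mathbb Z[\omega_d]$ is a principal ideal domain (indeed Euclidean) for $d\in\{1,2,3,7,11\}$, such a representation exists for every point of $\mathbb Q(\sqrt{-d})\cup\{\infty\}$. It is unique up to multiplication of both $p$ and $q$ by a common unit $\varepsilon\in\mathbb Z[\omega_d]^\times$. Because every unit has $|\varepsilon|=1$, the quantity $|p_1q_2-p_2q_1|$ depends only on the points and not on the chosen reduced representatives, so the statement is well posed.

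\emph{Only if.} Suppose $\tfrac{p_1}{q_1}\to\tfrac{p_2}{q_2}$ is an edge. Then there is $A=\begin{pmatrix}a&b\\c&d'\end{pmatrix}\in{\rm SL}(2,\mathbb Z[\omega_d])$ with $A(\infty)=\tfrac ac$ and $A(0)=\tfrac b{d'}$. From $ad'-bc=1$ we get that $\gcd(a,c)=\gcd(b,d')=1$, so these are reduced representatives. By uniqueness up to units, $(p_1,q_1)=\varepsilon_1(a,c)$ and $(p_2,q_2)=\varepsilon_2(b,d')$. Hence
\[
|p_1q_2-p_2q_1|=|\varepsilon_1\varepsilon_2|\,|ad'-bc|=1 .
\]

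\emph{If.} Suppose $|p_1q_2-p_2q_1|=1$. Since $p_1q_2-p_2q_1$ is an algebraic integer of absolute value $1$ in an imaginary quadratic field, it is a unit $u$. Its norm is $|u|^2=1$, which invertible elements of $\mathbb Z[\omega_d]$ have, and integers of norm $1$ are units. The matrix $B=\begin{pmatrix}p_1&p_2\\q_1&q_2\end{pmatrix}$ therefore has determinant $u$, and we must fix this to determinant $1$.

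\begin{itemize}
\item Replace $p_2,q_2$ by $u^{-1}p_2,u^{-1}q_2$. This does not change the point $\tfrac{p_2}{q_2}$, and the resulting matrix $A$ has $\det A=1$.
\item Then $A\in{\rm SL}(2,\mathbb Z[\omega_d])$, with $A(\infty)=\tfrac{p_1}{q_1}$ and $A(0)=\tfrac{p_2}{q_2}$.
\end{itemize}

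The values $A(\infty)$ and $A(0)$ at boundary points agree with the fractional linear action on $\mathbb C\cup\{\infty\}$. The quaternionic subtlety discussed for $iA$ concerns only points inside $\mathbb H^3$, and the rescaling is done on the column vectors, not by a scalar multiple of $A$. So the edge exists.

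The only delicate point is the uniqueness of reduced representatives up to units, including the case of $\infty$ (where $q=0$ and $p$ is a unit). This follows from unique factorization in the PID $\mathbb Z[\omega_d]$: if $p/q=p'/q'$ with both pairs coprime, then $pq'=p'q$ forces $q\mid q'$ and $q'\mid q$. This is also the only place where the restriction to class number one is used. The rest is routine, so I expect no real obstacle beyond this bookkeeping.
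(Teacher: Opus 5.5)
Your proof is correct and follows the paper's argument. Both directions reduce to the determinant of the matrix whose columns are coprime representatives of the two endpoints, and in the converse direction a unit rescaling of one column fixes the determinant to $1$; you use $u^{-1}$ on the second column, the paper a unit $\xi$ on the first. Your version also spells out what the paper leaves implicit: reduced representatives are unique up to units of modulus $1$, so the forward direction genuinely follows from $\det A=1$. You also show that $p_1q_2-p_2q_1$ is itself a unit, and you avoid the paper's typo writing the matrix as $\begin{pmatrix} p_1 & q_2\\ q_1 & p_2\end{pmatrix}$.
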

%%%%%%%%%%%%%%%%%%%%%%%
%%%%%%%%%%%%%%%%%%%%%%%
\begin{proof} 
If $\tfrac{p_{1}}{q_{1}}\to \tfrac{p_{2}}{q_{2}}$ is an edge of the Farey graph $\mathcal{E}_d$, then there exists 
$A \in {\rm SL}(2, \mathbb{Z}[\omega_d])$ such that $\tfrac{p_{1}}{q_{1}}\to \tfrac{p_{2}}{q_{2}}$ equals $A(\infty) \to  A(0)$, 
which implies $|p_{1}q_{2} - p_{2}q_{1}| = 1$.  On the other hand, 
if $|p_{1}q_{2} - p_{2}q_{1}| = 1$, then $A = \begin{pmatrix} p_{1} & q_{2} \\ q_{1} & p_{2} \end{pmatrix}$ maps $\infty$ to 
$\tfrac{p_{1}}{q_{1}}$ and $0$ to $\tfrac{p_{2}}{q_{2}}$.  If $\det A = 1$, then we see $\tfrac{p_{1}}{q_{1}}\to \tfrac{p_{2}}{q_{2}}$ equals $A(\infty) \to  A(0)$.  If not, then we can find a unit $\xi$ of $\mathbb{Z}[\omega_d]$ so that 
${\rm det} \begin{pmatrix}\xi p_{1}, & p_{2} \\ \xi q_{1} & q_{2} \end{pmatrix} = 1$ and 
$\begin{pmatrix}\xi p_{1}, & p_{2} \\ \xi q_{1} & q_{2} \end{pmatrix}$ maps $\infty$ to $\tfrac{p_{1}}{q_{1}}$ and $0$ to $\tfrac{p_{2}}{q_{2}}$. 
\end{proof}     

{\em Ford spheres} (horospheres) in $\mathbb{H}^3$ are the analogs of the Ford circles in $\mathbb{H}^2$ and play a role in our proofs. They are used for instance by Beardon and Short~\cite{Beardon-Short:14} in the context of complex continued fraction algorithms. We slightly generalize their definition.

\begin{definition}[Ford sphere]
For $d\in\{1,2,3,7,11\}$ let $\beta>0$ and $\tfrac pq \in \mathbb{Q}(\sqrt{-d})\cup\{\infty\}$ with $\gcd(p,q)=1$. If $\tfrac pq\not=\infty$, the (Euclidean) sphere $S$ with midpoint $(\frac pq,\frac{\beta}{2|q|^2})$ and radius $\frac{\beta}{2|q|^2}$ is called the \emph{generalized Ford sphere of size $\beta$ for $\frac pq$}. If $\tfrac pq=\infty$, the plane $\{w = x_{1} + x_{2} i + \beta j \}$  is called the \emph{generalized Ford sphere of size $\beta$ for $\infty$}. 

If $\beta =1$ then $S$ is called the \emph{Ford sphere for $\frac pq$}  and $S$ will be denoted by $\mathcal F\big(\tfrac pq\big)$. 
\end{definition}

 The Ford spheres $\mathcal F\big(\tfrac10\big)$ and $\mathcal F\big(\tfrac01\big)$ are tangent to each other and the touching point $0 + 0i + 1j$ is located on the geodesic $\big(\tfrac10,\tfrac01\big)$. It is easy to see that $P=A(\infty)$ implies that $\mathcal F(P) = A\cdot \mathcal F(\infty)$ for $A \in {\rm PSL}(2,\mathbb{Z}[\omega_d])$. Thus the geodesic $\big(\tfrac ab,\tfrac cd\big)=A\big(\tfrac10,\tfrac01\big)\in\mathcal{E}_d$ passes through the unique tangent point of $\mathcal F(\tfrac ab)$ and $\mathcal F(\tfrac cd)$. It follows that the structure of the Farey graph $\mathcal{E}_d$ is equivalent to that of Ford spheres. This is illustrated in Figure~\ref{fig:sqrt2Ed}.  Moreover, one can show by direct calculation that the generalized Ford spheres for $\tfrac ab$ and $\tfrac cd$ of size $\beta$ touch each other if and only if $|bc-ad|=\beta$. In this case the geodesic connecting $\tfrac ab$ and $\tfrac cd$ passes through this touching point. We refer to \cite{N} for an application of generalized Ford spheres. 

\begin{figure}[h]
\begin{tikzpicture}[scale=0.3]
\node[draw=none,fill=none] at (0,0){\includegraphics[trim=50 0 50 90,width=0.48\textwidth]{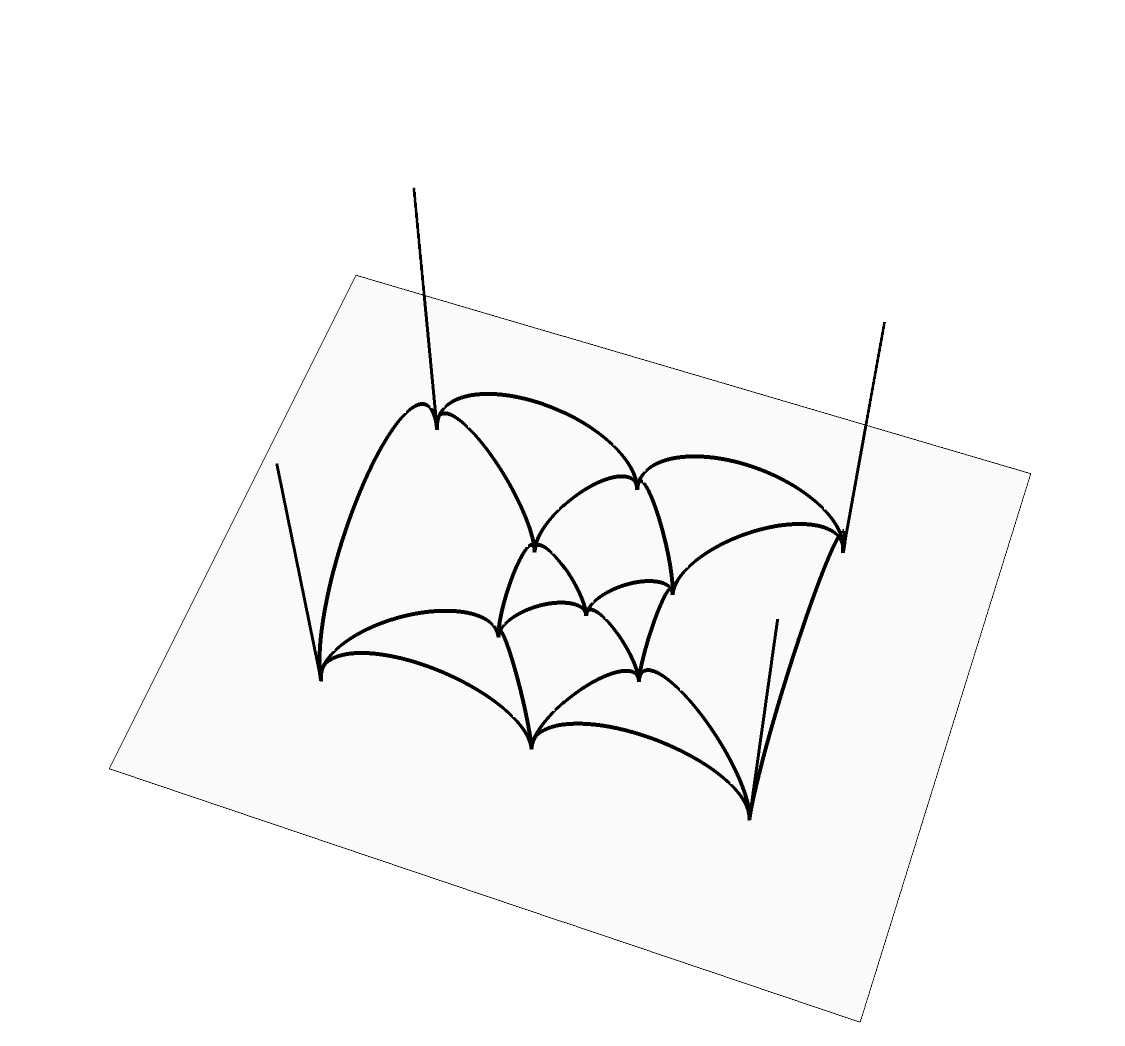}};
\node[draw=none,fill=none] at (20,0){\includegraphics[trim=50 0 50 90,width=0.48\textwidth]{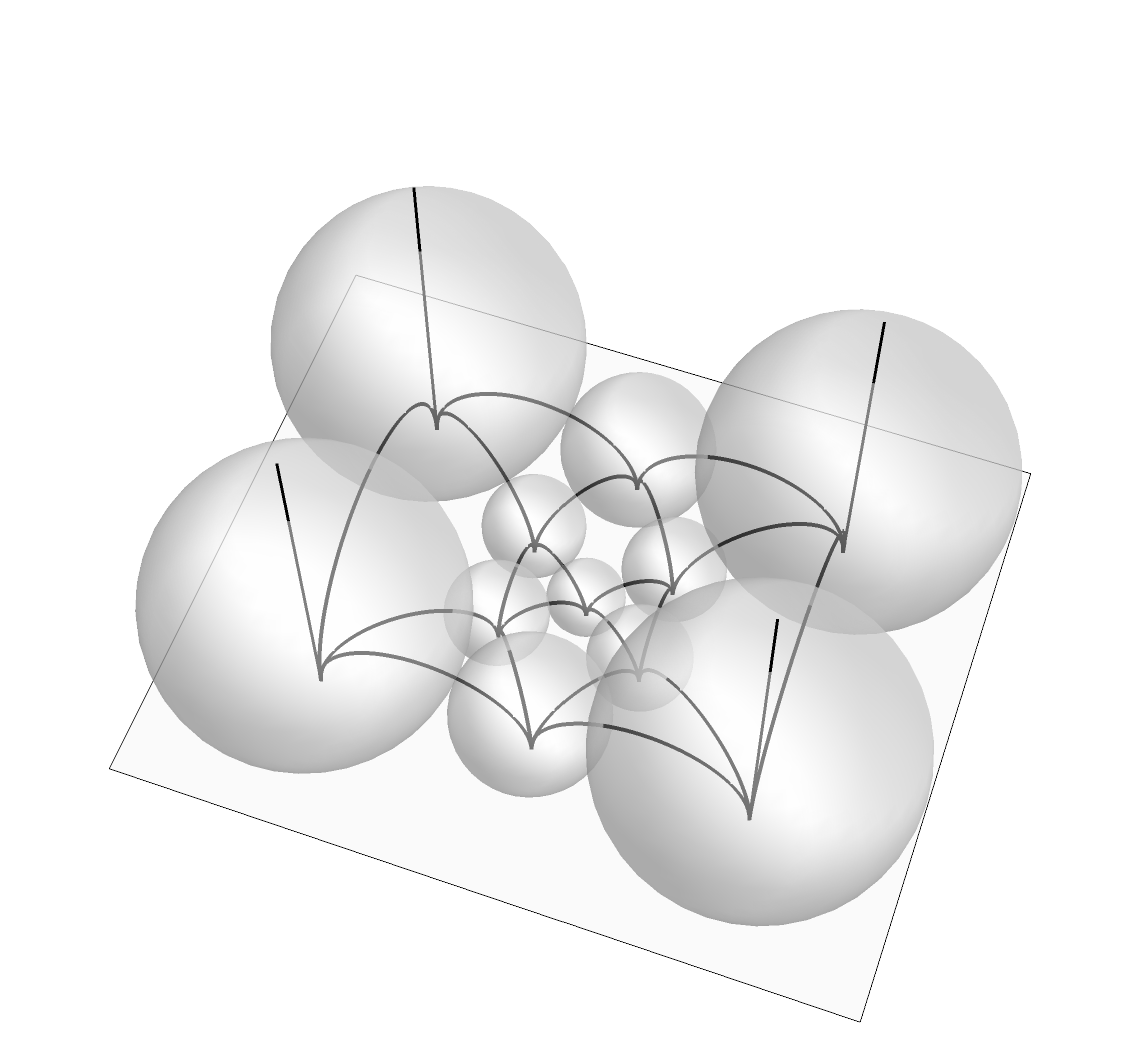}};
\node at (-5,-1.8) {$(\omega_2,0)$};
\node at (4.2,-4.9) {$(0,0)$};
\node at (7.6,1.5) {$(1,0)$};
\node at (-0.2,5.4) {$(1+\omega_2,0)$};
\end{tikzpicture}
\caption{A part of the Farey graph $\mathcal{E}_2$ with its edges drawn as geodesics in $\mathbb{H}^3$ (left) and its interplay with Ford spheres (right). \label{fig:sqrt2Ed}}
\end{figure}

In order to set up analogs in $\mathbb{H}^3$ for the Farey tessellation of $\mathbb{H}^2$ we need some more terminology.  Inspired by the work of Schmidt~\cite{SchmidtAsmus:69,SchmidtAsmus:78} we define certain hyperbolic polygons in $\mathbb{H}^3\cong\mathbb{C}\times\mathbb{R}_+$.

\begin{definition}[Farey triangle]\label{def:fareytriangle}
Let $d\in\{1,2,3,7,11\}$ and let $\triangle$ be the closed hyperbolic triangle in $\mathbb{H}^3$ with vertices $\frac01$, $\frac10=\infty$, and $\frac11$. For $A \in \mathrm{PSL}(2, \mathbb{Z}[\omega_d])$ set 
\[
\begin{pmatrix} p_1& p_2&p_3 \\ q_1&q_2&q_3\end{pmatrix}=A \cdot \begin{pmatrix} 0&1&1 \\ 1&0&1\end{pmatrix}.
\]
Then $\FT\big(\frac{p_1}{q_1},\frac{p_2}{q_2},\frac{p_3}{q_3}\big)=A\cdot \triangle$ is called a \emph{Farey triangle}.
\end{definition}

Since $\triangle$ is contained in a half-plane orthogonal to $\mathbb{C}\times\{0\}$, for $A \in \mathrm{PSL}(2, \mathbb{Z}[\omega_d])$ the Farey triangle $A\cdot \triangle=\FT\big(\frac{p_1}{q_1},\frac{p_2}{q_2},\frac{p_3}{q_3}\big)\subset\mathbb{H}^3$ is contained in a half-sphere (possibly degenerated to a half-plane) orthogonal to $\mathbb{C}\times\{0\}$. Its boundary consists of three geodesics. The Farey triangle $\FT\big(\frac{p_1}{q_1},\frac{p_2}{q_2},\frac{p_3}{q_3}\big)$ has the set of {\em cusps} $\big\{\frac{p_1}{q_1},\frac{p_2}{q_2},\frac{p_3}{q_3}\big\}$.

The notion of Farey triangle in Schmidt~\cite{SchmidtAsmus:69} coincides with the orthogonal projection of our Farey triangles to $\mathbb{C}\times\{0\}$ (this projection is a possibly degenerate Euclidean triangle). 

For some values of $d$ we need Farey quadrangles and Farey hexagons.

\begin{definition}[Farey quadrangle]
Let $d\in\{2, 7\}$ and let $\square$ be the closed hyperbolic quadrangle in $\mathbb{H}^3$ with vertices $\frac01$, $\frac10$, $\frac{\omega_d}1$, and $\frac1{\overline{\omega}_d}$. For $A \in \mathrm{PSL}(2, \mathbb{Z}[\omega_d])$  set 
\[
\begin{pmatrix} p_1& p_2&p_3&p_4 \\ q_1&q_2&q_3&q_4 \end{pmatrix}=A \cdot \begin{pmatrix} 0&1&\omega_d&1 \\ 1&0&1&\overline{\omega}_d\end{pmatrix}.
\]
Then $\FQ\big(\frac{p_1}{q_1},\frac{p_2}{q_2},\frac{p_3}{q_3},\frac{p_4}{q_4}\big)=A\cdot \square$ is called a \emph{Farey quadrangle}.
\end{definition}

Again, for $A \in \mathrm{PSL}(2, \mathbb{Z}[\omega_d])$ the quadrangle $\FQ(\frac{p_1}{q_1},\frac{p_2}{q_2},\frac{p_3}{q_3},\frac{p_4}{q_4})=A\cdot \square$ is contained in a half sphere which is orthogonal to the base plane. Its boundary consists of four geodesics which connect pairs of the cusps $\big\{\frac{p_1}{q_1},\frac{p_2}{q_2},\frac{p_3}{q_3},\frac{p_4}{q_4}\big\}$. 

\begin{definition}[Farey hexagon]
Let $d=11$ and let $\hexagon$ be the closed hyperbolic hexagon in $\mathbb{H}^3$ with vertices $\frac01$, $\frac10$, $\frac{\omega_d}1$,  $\frac2{\overline{\omega}_d}$, $\frac{\omega_d}2$,  and $\frac1{\overline{\omega}_d}$. For $A \in \mathrm{PSL}(2,\mathbb{Z}[\omega_d])$ set 
\[
\begin{pmatrix} p_1&p_2 & p_3&p_4&p_5&p_6 \\ q_1&q_2&q_3&q_4&q_5&q_6 \end{pmatrix}=A \cdot \begin{pmatrix} 0&1&\omega_d&2&\omega_d&1 \\ 1&0&1&\overline{\omega}_d&2&\overline{\omega}_d\end{pmatrix}.
\]
Then $\FH\big(\frac{p_1}{q_1},\frac{p_2}{q_2},\frac{p_3}{q_3},\frac{p_4}{q_4},\frac{p_5}{q_5},\frac{p_6}{q_6}\big)=A\cdot \hexagon$ is called a \emph{Farey hexagon}.
\end{definition}

To facilitate notation we will sometimes regard a geodesic as a {\em Farey digon} $\FD\big(\frac{p_1}{q_1},\frac{p_2}{q_2}\big)$. Moreover, a {\em Farey polygon} $\FP\big(\frac{p_1}{q_1},\ldots,\frac{p_n}{q_n}\big)$ can be a Farey digon, a Farey triangle, a Farey quadrangle, or a Farey hexagon (depending on $n\in\{2,3,4,6\}$, which may be unspecified).  Note that $\FP\big(\frac{p_1}{q_1},\ldots,\frac{p_n}{q_n}\big)$ has $n$ cusps. For $n\ge 3$ each Farey polygon is a subset of a unique possibly degenerate half-sphere whose equator lies in $\mathbb{C}\times\{0\}$. 

We are interested in the $\mathrm{PSL}(2,\mathbb{Z}[\omega_d])$-action on $\mathbb{Q}(\sqrt{-d})\cup\{\infty\}$. This action will induce a tessellation of the upper half space $\mathbb{H}^3\cong \mathbb{C}\times \mathbb{R}_+$ whose cells are bounded by Farey polygons. 

If $d\not\equiv 3 \pmod 4$ then $W_d\subset \mathbb{C}$ is the rectangle with corner points $0,1,\omega_d,\omega_d+1$ and if $d\equiv 3 \pmod 4$ then $W_d\subset \mathbb{C}$ is the triangle with corner points $0,1,\omega_d$. 

\begin{definition}[Wall and standard cell]\label{def:standard}
Let $Y\subset \mathbb{C}$ be an edge of the polygon $W_d$. A half-plane $X\subset \mathbb{H}^3 \cup \partial \mathbb{H}^3$ containing $Y \times \mathbb{R}_+$  is called a \emph{wall}.

The \emph{standard cell} $\mathcal{C}_d$ is the closure of the set of all points in $W_d\times \mathbb{R}_+$ that lie above the union of all non-degenerate spheres of the form $A \cdot X$, where $A\in \mathrm{PSL}(2,\mathbb{Z}[\omega_d])$ and $X$ is a wall.
\end{definition}

Later, in Section~\ref{sec:polyhedron}, we will see that $\mathcal{C}_d$ is a hyperbolic polyhedron with finitely many faces for each $d\in\{1,2,3,7,11\}$. Moreover, we will show that $\mathcal{C}_d$ will serve as tile in a Farey tessellation of $\mathbb{H}^3$, which forms an analog of the classical Farey tessellation of~$\mathbb{H}^2$.

Strongly related to Bianchi groups and Farey graphs are {\em complex continued fraction expansions}. We use the abbreviations 
\begin{equation*} 
[a_0; a_1,a_2\ldots]=
\displaystyle  a_0+\frac1{a_1+ 
 \displaystyle  \frac1{a_2 + 
 \displaystyle \frac1{\ddots }}}, 
 \;
 [a_0; a_1,\ldots,a_n]=\displaystyle  a_0+\frac1{a_1+ 
 \displaystyle \frac1{\ddots + 
 \displaystyle \frac1{a_n}}}, 
\end{equation*}
for continued fraction expansions and call $\frac{p_n}{q_n} = [a_0; a_1,\ldots,a_n]$ the {\em $n$-th convergent} of the continued fraction expansion $[a_0; a_1,a_2\ldots]$; in particular $\tfrac{p_{0}}{q_{0}} = \tfrac{a_{0}}{1}$. If $a_0=0$ we just write $[a_1,a_2,\ldots]=[0;a_1,a_2,\ldots]$ and $[a_1,\ldots,a_n]=[0;a_1,\ldots,a_n]$. 
It is well-known that $p_{n-1}q_{n}- p_{n}q_{n-1} = (-1)^{n}$ for $n \ge 1$.  Hence, $\tfrac{p_{n-1}}{q_{n-1}} \to  \tfrac{p_{n}}{q_{n}}$ is an edge of $\mathcal E_{d}$.  We can immediately generalize \cite[Theorem~3.1]{B-H-S} to the complex setting. In particular, in the following theorem we show that for  $\infty, R_{1}, R_{2},  \ldots, R_{n} \in \mathbb Q(\sqrt{-d})$ such that $R_{1}\to R_{2}\to  \cdots \to R_{n}$ is a walk in $\mathcal E_{d}$ we can construct a continued fraction expansion of $R_{n}$ satisfying $\frac{p_k}{q_k}=R_k$ for $k\in\{1,\ldots,n\}$.

\begin{theorem}\label{th:correspondencePathCF}
Fix $d\in\{1,2,3,7,11\}$ and let $x\in \mathbb{Q}(\sqrt{-d})$. Then $R_1,\ldots, R_n$ with $R_n=x$, are the consecutive convergents of some complex continued fraction expansion of $x$ if and only if $\infty \to  R_1 \to \cdots \to  R_n$ is a walk in $\mathcal{E}_{d}$ from 
$\infty$ to $x$.
\end{theorem}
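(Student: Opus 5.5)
The proof follows \cite[Theorem~3.1]{B-H-S} and rests on Proposition~\ref{prop:EdCrit}, which characterizes edges by $|p_1q_2-p_2q_1|=1$. Throughout, ``complex continued fraction expansion'' means a finite expansion $[a_0;a_1,\ldots,a_m]$ with $a_k\in\mathbb{Z}[\omega_d]$. The indexing convention is that the convergents are $R_1=\frac{p_0}{q_0},\ldots,R_n=\frac{p_{n-1}}{q_{n-1}}$ and $\infty=\frac{p_{-1}}{q_{-1}}=\frac10$, so the statement should be read up to this shift of indices.

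For the direction ``convergents $\Rightarrow$ walk'' I use the standard recursions
\[
p_k=a_kp_{k-1}+p_{k-2},\qquad q_k=a_kq_{k-1}+q_{k-2},
\]
with $p_{-1}=1$, $q_{-1}=0$, $p_{-2}=0$, $q_{-2}=1$. These give $p_{k-1}q_k-p_kq_{k-1}=\pm1$ for all $k\ge 0$. Together with $\gcd(p_k,q_k)=1$, which follows from that identity, Proposition~\ref{prop:EdCrit} shows that $\frac{p_{k-1}}{q_{k-1}}\to\frac{p_k}{q_k}$ is an edge of $\mathcal{E}_d$. The first step $\infty\to\frac{a_0}{1}$ is an edge for the same reason. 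Hence $\infty\to R_1\to\cdots\to R_n$ is a walk.

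For the converse I argue by induction on $n$, building the matrices $M_k=\begin{pmatrix}p_{k}&p_{k-1}\\ q_{k}&q_{k-1}\end{pmatrix}$. Given the walk, write $R_1=\frac{p_0}{q_0}$ in lowest terms. Since $\infty\to R_1$ is an edge, $|q_0|=1$, so $q_0$ is a unit and I rescale to $q_0=1$. This gives $a_0=p_0\in\mathbb{Z}[\omega_d]$ and $M_0=\begin{pmatrix}a_0&1\\1&0\end{pmatrix}$.

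Inductively, suppose $M_{k-1}$ has determinant a unit and columns representing $R_k$ and $R_{k-1}$. Write $R_{k+1}=\frac{p}{q}$ in lowest terms. Because $R_k\to R_{k+1}$ is an edge, Proposition~\ref{prop:EdCrit} gives $|p_{k-1}q-pq_{k-1}|=1$, so $p_{k-1}q-pq_{k-1}=\varepsilon$ is a unit. Then $M_{k-1}^{-1}\binom{p}{q}$ has second coordinate $\pm\varepsilon/\det M_{k-1}$, which is a unit, and first coordinate some $c\in\mathbb{Z}[\omega_d]$. After multiplying $(p,q)$ by a suitable unit, I get $\binom{p}{q}=M_{k-1}\binom{a_k}{1}$ with $a_k\in\mathbb{Z}[\omega_d]$. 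This is exactly the recursion $p_k=a_kp_{k-1}+p_{k-2}$, $q_k=a_kq_{k-1}+q_{k-2}$.

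Then $M_k=M_{k-1}\begin{pmatrix}a_k&1\\1&0\end{pmatrix}$, and unwinding the product gives $M_{n-1}(0\text{ or }\infty)=[a_0;a_1,\ldots,a_{n-1}]$ with $\frac{p_k}{q_k}=R_{k+1}$. In particular the expansion $[a_0;a_1,\ldots,a_{n-1}]$ equals $x$ and has the $R_k$ as its successive convergents.

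The main obstacle is the unit bookkeeping. For $d\in\{1,3\}$ the representatives $(p,q)$ are determined only up to units, and the determinant of $M_k$ may be a unit other than $\pm1$. I plan to handle this by allowing $\det M_k$ to be any unit, as in the proof of Proposition~\ref{prop:EdCrit}. Only the fractions $\frac{p_k}{q_k}$ matter, and the recursion with integral $a_k$ then forces the rescaled pair to satisfy the standard determinant identity $\pm1$ automatically.

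One degenerate case needs a remark. If the walk revisits a vertex, say $R_{k+1}=R_{k-1}$, then $a_k=0$. A zero partial quotient is allowed, since the expansions are formal and $x\in\mathbb{Q}(\sqrt{-d})$, so nothing more is needed.
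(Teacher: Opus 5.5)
Your proposal is correct and is essentially the paper's argument written with column vectors instead of M\"obius maps: your $M_k$ is the paper's product $S_{\alpha_1}\cdots S_{\alpha_k}$. Your observation that $M_{k-1}^{-1}\binom{p}{q}$ has unit second coordinate is exactly the paper's step that the pulled-back vertex is a Farey neighbour of $\infty$ and hence lies in $\mathbb{Z}[\omega_d]$; your explicit index shift also matches what the paper's proof actually computes. Two cosmetic points: the unit bookkeeping is automatic since $\det M_k=(-1)^{k+1}$, and it is $M_{n-1}(\infty)$, not $M_{n-1}(0)$, that equals $x$.
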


For the sake of completeness we give the proof, which follows the classical case.

\begin{proof}
We first start with consecutive convergents $R_1,\ldots, R_n$, $R_n=x$, of a continued fraction expansion of $x \in \mathbb{Q}(\sqrt{-d})$. For convenience, set 
$S_{\alpha}= \begin{pmatrix}
\alpha &1\\
1&0
\end{pmatrix}$.
Then, by definition, 
$R_k=S_{\alpha_1}\cdots S_{\alpha_k}(\infty)$ holds for each $k\in\{1,\ldots,n\}$. For $k=1$ we  see that $\infty \to  R_1$ is an edge in $\mathcal{E}_{d}$, because $R_1=S_{\alpha_1}(\infty)=\alpha_1\in \mathbb{Z}[\omega_{d}]$. Now, we argue by induction. Since $\infty$ and $\alpha_{k+1}$ are Farey neighbors, the same is true for  
\[
R_k=S_{\alpha_1} \cdots S_{ \alpha_k}(\infty) \quad\hbox{and}\quad
 R_{k+1}=S_{\alpha_1} \cdots S_{\alpha_{k+1}}(\infty)=R_k=S_{\alpha_1} \cdots 
 S_{\alpha_k}(\alpha_{k+1}).
\]
Thus there is an edge from $R_k\to R_{k+1} \in \mathcal{E}_d$ and, hence, by induction $\infty \to  R_1 \to \cdots \to  R_{k+1}$ is a walk in $\mathcal{E}_{d}$.

For the converse direction we assume that $\infty \to  v_1 \to \cdots \to  v_n$ with $v_n=x$ is a walk in $\mathcal{E}_{d}$ from $\infty$ to $x$. We need to construct 
$\alpha_1,\ldots \alpha_n \in \mathbb{Z}[\omega_d]$ satisfying $v_k=S_{\alpha_1} \cdots 
S_{\alpha_k}(\infty) $. 
Then clearly $v_k$ is a convergent of  $[\alpha_1,\ldots, \alpha_n]=x$ for $k\in\{1,\ldots,n\}$. For $k=1$ we see that $v_1\in \mathbb{Z}[\omega_d]$ and set $\alpha_1=v_1$, hence, $v_1=S_{\alpha_1}(\infty)$. Assume now that we already constructed $\alpha_1,\ldots,\alpha_k\in \mathbb{Z}[\omega_d]$ in a way that 
$v_k=S_{\alpha_1} \cdots S_{\alpha_k}(\infty)$.  Then set $\alpha_{k+1}=S_{\alpha_k}^{-1}\cdots S_{\alpha_1}^{-1}(v_{k+1})$. This immediately yields $v_{k+1}=S_{\alpha_1} \cdots 
S_{\alpha_{k+1}}(\infty)$. Moreover, because $v_k =S_{\alpha_1} \cdots S_{\alpha_{k}}(\infty)
\sim S_{\alpha_1} \cdots S_{\alpha_{k}}(\alpha_{k+1})$ and $S_{\alpha_i}^{-1}$ preserves the Farey neighbor relation ``$\sim$'', we gain $\infty \sim \alpha_{k+1}$ and, hence, 
$\alpha_{k+1}\in \mathbb{Z}[\omega_d]$. The proof again follows by induction.
\end{proof}

Theorem~\ref{th:correspondencePathCF} allows to define  geodesic continued fraction expansions as follows.
 
\begin{definition}[Geodesic continued fraction expansion]
Let $d\in\{1,2,3,7,11\}$. The continued fraction expansion $z=[a_0;a_1,a_2,\ldots]$ with $a_0,a_1,\ldots\in\mathbb{Z}[\omega_d]$ of $z\in\mathbb{C}$ is said to be a {\em geodesic continued fraction expansion} if $\infty \to \tfrac{p_{0}}{q_{0}} \to \tfrac{p_{1}}{q_{1}} \to \cdots \to \tfrac{p_{n}}{q_{n}}$ is a geodesic path in $\mathcal E_{d}$ for any $n \ge 0$. 
\end{definition} 

Following Lakein~\cite{lakein1973approximation} we define the {\em nearest integer continued fraction algorithms} for $d\in\{1,2,3\}$ as follows.

\begin{definition}[Nearest integer continued fraction algorithm]\label{def:nicf}
Let
\[
\begin{split}
U_{1}  &= \left\{z= x + yi : -\tfrac{1}{2} \le x < \tfrac{1}{2}, \,\,  -\tfrac{1}{2} \le y < 
\tfrac{1}{2} \right\}, \\
U_{2}  &= \left\{z= x + yi : -\tfrac{1}{2} \le x < \tfrac{1}{2}, \,\,  -\tfrac{\sqrt{2}}{2} \le y < 
\tfrac{\sqrt{2}}{2} \right\}, 
\end{split}
\]
and let $U_3$ be the hexagon with set of vertices 
$\big\{\big(\pm \tfrac12, \pm \tfrac1{2\sqrt3}\big),\big(0,\pm\tfrac{1}{\sqrt{3}}\big)\big\}$ including its left and two bottom sides with two vertices (see Figure~\ref{fig:regionU3}). 
\begin{figure}[h]
\begin{center}
\includegraphics[width=0.2\textwidth]{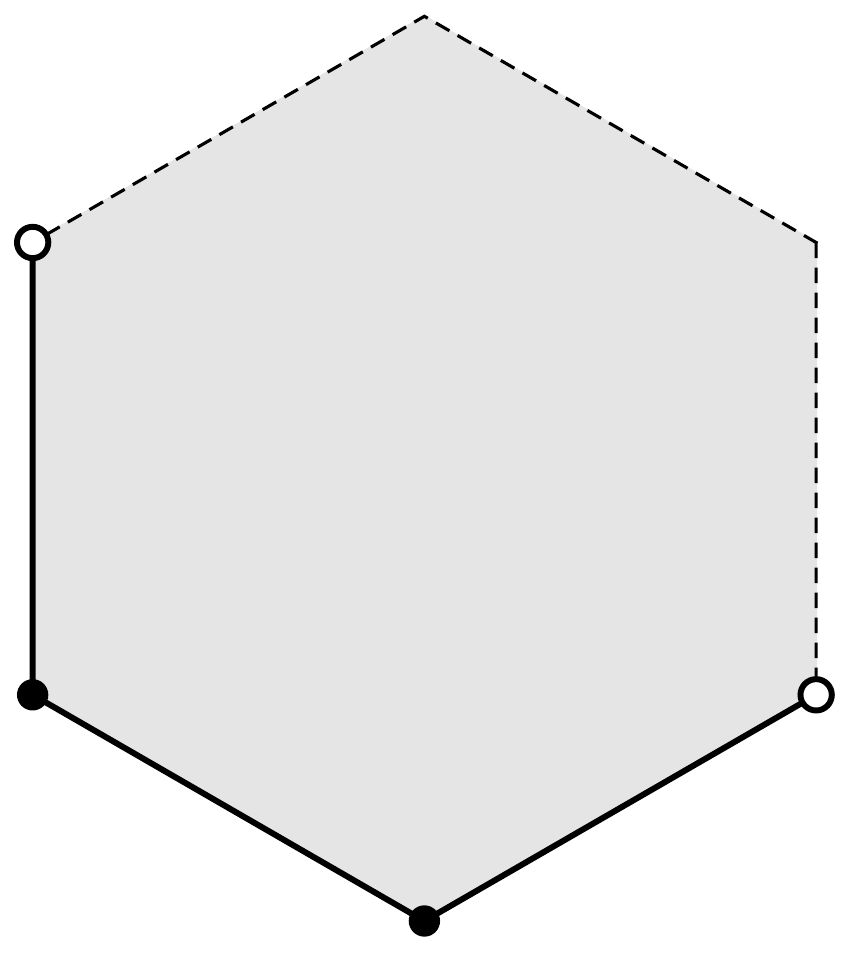} \qquad
\end{center}
\caption{The region $U_3$.
\label{fig:regionU3}}
\end{figure}
For $d\in \{1, 2, 3\}$, the {\em nearest integer continued fraction mapping} is given by 
\[
T_{d}(z) = \frac{1}{z} - a_{d}(z) \,\, \mbox{if} \,\, z \ne 0 \,\,\mbox{and}\,\, T_{d}(0) = 0,
\]
where $a_{d}(z) \in \mathbb{Z}[\omega_d]$ is chosen in a way that $\tfrac{1}{z} - a_{d}(z) \in U_{d}$ if $z\ne 0$ and $a_{d}(0) = \infty$. The the {\em nearest integer continued fraction expansion} of $z \in U_{d}$ is given by
$z = [a_{d,1}(z), a_{d,2}(z), \ldots, a_{d,n}(z)]$ with $a_{d, n}(z) = a_{d}(T_{d}^{n-1}(z))$. 
\end{definition}

One can define versions of the nearest integer continued fraction algorithm also for $d\in\{7,11\}$. However, we will only deal with the cases $d\in\{1,2,3\}$ in the sequel. In particular, we are able to prove that these algorithms always produce geodesic continued fraction expansions.

\section{Properties of the Farey graph}

In this section we provide some results on the Farey graphs $\mathcal E_{d}$ for the Euclidean cases $d\in\{1,2,3,7,11\}$. We start with the following connectivity result.

%%%%%%%%%%%%%%%
\begin{proposition}
If $d\in\{1,2,3,7,11\}$ then the Farey graph $\mathcal E_{d}$ is connected.  
\end{proposition}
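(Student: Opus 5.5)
The plan is to show that every vertex of $\mathcal E_d$ can be joined to $\infty$ by a walk. Once this is done, any two vertices are connected through $\infty$, because edges can be traversed in both directions ($z_1\to z_2\in\mathcal E_d$ iff $z_2\to z_1\in\mathcal E_d$, via $A\mapsto A\begin{pmatrix}0&-1\\1&0\end{pmatrix}$). By the remark after the definition of $\mathcal E_d$, the vertex set is $\mathbb{Q}(\sqrt{-d})\cup\{\infty\}$, since the class number is one. So it suffices to treat a given $x\in\mathbb{Q}(\sqrt{-d})$.

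By Theorem~\ref{th:correspondencePathCF}, a walk $\infty\to R_1\to\cdots\to R_n=x$ exists as soon as $x$ has a finite continued fraction expansion $x=[a_0;a_1,\ldots,a_{n}]$ with $a_k\in\mathbb{Z}[\omega_d]$. Its convergents then form the walk. Strictly, the theorem is phrased with $R_1$ adjacent to $\infty$; we use the convergent $\frac{p_0}{q_0}=a_0\in\mathbb{Z}[\omega_d]$ as the first step, and $\infty\to a_0$ is an edge by Proposition~\ref{prop:EdCrit}, since $|1\cdot1-a_0\cdot 0|=1$.

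The key step is therefore to produce a finite expansion, and this uses the Euclidean property. Write $x=p/q$ with $p,q\in\mathbb{Z}[\omega_d]$ coprime, $q\neq0$. Since $\mathbb{Z}[\omega_d]$ is norm-Euclidean for $d\in\{1,2,3,7,11\}$, there is $a_0\in\mathbb{Z}[\omega_d]$ with $p=a_0q+r$ and $|r|<|q|$. If $r=0$ we stop. Otherwise we set $x_1=q/r$ and repeat.

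The quantities $|q|^2$ are positive rational integers (norms) and strictly decrease, so the process terminates after finitely many steps. The outcome is $x=[a_0;a_1,\ldots,a_n]$. Equivalently, one may invoke the standard fact that $\mathrm{SL}(2,\mathbb{Z}[\omega_d])$ is generated by the matrices $S_\alpha$ together with units for Euclidean rings, but the direct Euclidean algorithm is cleaner.

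I expect the main (minor) obstacle to be checking that the termination step really yields convergents in the sense of Theorem~\ref{th:correspondencePathCF}, in particular that the final convergent equals $x$ even when $\gcd$ issues or units intervene. This is handled by the matrix identity $S_{a_0}\cdots S_{a_n}(\infty)=x$, which holds by construction regardless of normalization. Alternatively, one can bypass the theorem entirely. Successive convergents satisfy $|p_{k-1}q_k-p_kq_{k-1}|=1$, so by Proposition~\ref{prop:EdCrit} consecutive ones are adjacent. This gives the walk $\infty\to p_0/q_0\to\cdots\to p_n/q_n=x$ directly.
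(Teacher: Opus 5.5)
Your proof is correct. It rests on the same ingredient as the paper's proof, namely Euclidean division in $\mathbb{Z}[\omega_d]$ with strict decrease of the integer-valued norm, but you organize it differently.

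The paper argues by minimal counterexample. It takes an unreachable $\frac{\alpha_0}{\alpha_1}$ with $|\alpha_1|>1$ minimal and writes a B\'ezout relation $\alpha_0 s-\alpha_1 r=1$. Dividing $s$ by $\alpha_1$ then gives a neighbor $\frac{r-a\alpha_0}{\alpha_2}$ of $\frac{\alpha_0}{\alpha_1}$ with $|\alpha_2|<|\alpha_1|$. So its descent runs backward from the target vertex toward $\infty$, one edge at a time, and uses only Proposition~\ref{prop:EdCrit}.

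You instead run the Euclidean algorithm forward on $(p,q)$ to get a finite expansion $x=[a_0;a_1,\ldots,a_n]$. You then read off the whole walk from its convergents, via the determinant identity $p_{k-1}q_k-p_kq_{k-1}=\pm1$ (equivalently via Theorem~\ref{th:correspondencePathCF}).

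What your version buys:
\begin{itemize}
\item It is constructive and gives an explicit walk whose length is the number of division steps.
\item It needs neither a coprime representation of $x$ nor B\'ezout; coprimality of the convergents comes for free from the determinant identity.
\item As a by-product it makes the Bianchi--Hurwitz remark unnecessary. The matrix $S_{a_0}\cdots S_{a_n}$, with its second column negated if its determinant is $-1$, lies in $\mathrm{SL}(2,\mathbb{Z}[\omega_d])$ and sends $\infty$ to $x$. This shows directly that every $x\in\mathbb{Q}(\sqrt{-d})$ is in $\mathrm{PSL}(2,\mathbb{Z}[\omega_d])(\infty)$.
\end{itemize}

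The paper's version is shorter and avoids the continued fraction machinery. The price is that it presupposes the class-number-one facts: coprime representatives and B\'ezout.
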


\begin{proof}  Because $R_1\to R_2$ is an edge in $\mathcal{E}_{d}$ if and only if $R_2\to R_1$ is an edge in $\mathcal{E}_{d}$ it suffices to show that for any $R \in \mathbb Q(\sqrt{-d})$, there exists a walk in $\mathcal E_{d}$ leading from $\infty$ to~$R$. Let $\alpha\in \mathbb{Z}[\omega_d]$ be arbitrary. Because $\infty \to 0$ is an edge in $\mathcal{E}_{d}$ and for $A = \begin{pmatrix} 1 & \alpha \\ 0 & 1\end{pmatrix}$ we have $A(\infty) = \infty$ and  $A(0) = \alpha$, 
also $\infty \to \alpha$ is an edge in $\mathcal{E}_{d}$.  Assume that $\mathcal E_{d}$ is not connected. Then there is $\frac{\alpha_{0}}{\alpha_{1}} \in \mathbb Q(\sqrt{-d})\setminus \mathbb{Z}[\omega_d]$ with $\alpha_0,\alpha_1\in\mathbb{Z}[\omega_d]$ coprime such that there is no walk from $\infty$ to $\frac{\alpha_{0}}{\alpha_{1}}$ in $\mathcal{E}_d$. Assume that $|\alpha_1|>1$ is minimal with this property. Since $\alpha_0,\alpha_1$ are coprime, there are $r,s\in \mathbb{Z}[\omega_d]$ such that $\alpha_0s - \alpha_1 r=1$. By the Euclidean algorithm there exist $a,\alpha_2\in \mathbb{Z}[\omega_d]$ with $|\alpha_2| < |\alpha_1|$ such that $s=a\alpha_1+\alpha_2$. Then $\alpha_0 \alpha_2 - \alpha_1(r-a \alpha_0)=\alpha_0 (s-a\alpha_1) - \alpha_1(r-a \alpha_0)=\alpha_0s + \alpha_1 r=1$ and, hence,  by Proposition~\ref{prop:EdCrit}, $\frac{r-a\alpha_0}{\alpha_2} \to \frac{\alpha_{0}}{\alpha_{1}}$. But by the minimality of $|\alpha_1|$,  there is a walk from $\infty$ to $\frac{r-a\alpha_0}{\alpha_2}$ in $\mathcal{E}_d$ and, hence, a walk from $\infty$ to $\frac{\alpha_{0}}{\alpha_{1}}$, a contradiction.
\end{proof}

If $d \in\{1,3\}$, the restriction of  $\mathcal E_{d}$ to the set of vertices $\mathbb{Z}[\omega_d]$ forms a connected subgraph of 
$\mathcal E_{d}$, because $|\omega_1| = |\omega_3| = 1$. For $d \in\{ 2, 7,11\}$ this is not the case,  however, we have the following result (see Figure~\ref{fund3} for an illustration).
%%%%%%%%%%%%%%%%%%
%%%%%%%%%%%%%%%%%%
\begin{proposition}
Define
$G_2 = \mathbb{Z}[\omega_2] \cup \big(\mathbb{Z}[\omega_2] + \tfrac{\omega_2}{2}\big)$,
$G_7 = \mathbb{Z}[\omega_7] \cup \big(\mathbb{Z}[\omega_7] + 
\tfrac{\omega_7}{2}\big)$, and
$G_{11} = \mathbb{Z}[\omega_{11}] \cup \big(\mathbb{Z}[\omega_{11}] + 
\tfrac{\omega_{11}}{3} \big) \cup \big(\mathbb{Z}[\omega_{11}] + 
\tfrac{\omega_{11}}{2} \big) \cup \big(\mathbb{Z}[\omega_{11}] + 
\tfrac{2\omega_{11}}{3} \big)$.
Then, for $d\in\{2,7,11\}$, the restriction of  $\mathcal E_{d}$ to the set of vertices $G_d$ 
forms a connected subgraph of $\mathcal E_{d}$.
\end{proposition}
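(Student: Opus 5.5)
We exploit the translation invariance of $\mathcal E_d$ and reduce to finitely many explicit edge checks via Proposition~\ref{prop:EdCrit}. Translations $z\mapsto z+\alpha$ with $\alpha\in\mathbb{Z}[\omega_d]$ are given by $\begin{pmatrix}1&\alpha\\0&1\end{pmatrix}\in\mathrm{PSL}(2,\mathbb{Z}[\omega_d])$, so they map edges to edges. They also map each $G_d$ onto itself, since $G_d$ is a union of cosets of $\mathbb{Z}[\omega_d]$. Hence it suffices to show two things. First, every point of $G_d$ is joined within $G_d$ to some point of $\mathbb{Z}[\omega_d]$. Second, $0$ is joined within $G_d$ to $1$ and to $\omega_d$. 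Connectivity then follows: translates of paths from $0$ to $1$ and from $0$ to $\omega_d$ connect all of $\mathbb{Z}[\omega_d]$, and every coset representative attaches to this set.

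For the edge checks, write a point as $\frac pq$ with $\gcd(p,q)=1$ and test $|p_1q_2-p_2q_1|=1$ via Proposition~\ref{prop:EdCrit}. For $d=2$ we have $\omega_2^2=-2$, so $\frac{\omega_2}{2}=\frac{-1}{\omega_2}$ in lowest terms. Then $0=\frac01\to\frac{-1}{\omega_2}$ gives $|0\cdot\omega_2-(-1)\cdot 1|=1$, an edge. Similarly $\frac{\omega_2}{1}\to\frac{-1}{\omega_2}$ gives $|\omega_2^2+1|=|-1|=1$, an edge. So the path $0\to\frac{\omega_2}{2}\to\omega_2$ lies in $G_2$. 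Translating by integers handles the coset $\mathbb{Z}[\omega_2]+\frac{\omega_2}{2}$. The edge $0\to1$ is immediate from $|0\cdot1-1\cdot1|=1$.

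For $d=7$, $\omega_7$ has norm $2$ with $\omega_7\overline\omega_7=2$, so $\frac{\omega_7}{2}=\frac1{\overline\omega_7}$. The edges $0\to\frac1{\overline\omega_7}$ and $\frac1{\overline\omega_7}\to\omega_7$ hold because $|\omega_7\overline\omega_7-1|=1$. Since $0\to1$ and $0\to\omega_7$ are already edges ($|\omega_7|$ is irrelevant for $\infty\to\alpha$, but $0\to\omega_7$ needs $|\omega_7\cdot1-0|=|\omega_7|\neq1$), we route $0\to\frac1{\overline\omega_7}\to\omega_7$ instead. Together with $0\to1$ this suffices.

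For $d=11$, $|\omega_{11}|^2=3$. Here $\frac{\omega_{11}}{3}=\frac1{\overline\omega_{11}}$ and $\frac{2\omega_{11}}{3}=\frac{2}{\overline\omega_{11}}$. Since $2$ is inert, $\frac{\omega_{11}}{2}$ is already in lowest terms. We check the chain
\[
0\to\frac1{\overline\omega_{11}}\to\frac{\omega_{11}}{2}\to\frac2{\overline\omega_{11}}\to\omega_{11},
\]
mirroring the hexagon $\hexagon$. The determinants are $1$, $|2-\omega_{11}\overline\omega_{11}|=|2-3|=1$, $|4-3|=1$, and $|2-3|=1$. The middle one uses $\omega_{11}\overline\omega_{11}=3$, and the edge $0\to 1$ is again immediate. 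Every point of $G_{11}$ lies in one of the three non-integral cosets, so after a translation it appears on this chain. This proves the claim.

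The main obstacle is bookkeeping rather than any conceptual difficulty. One has to write each coset representative in reduced form $\frac pq$ with the right unit, and confirm that the required determinants are units. Note also that in the statement the cosets $\mathbb{Z}[\omega_d]+\frac{\omega_d}{2}$ for $d=2$ and $\frac{\omega_d}{2}$ for $d=7$ must be read with $\omega_d$ as defined, which the computations above do. If some determinant comes out as a unit other than $\pm1$, we use $|\cdot|=1$ as in Proposition~\ref{prop:EdCrit}, so only absolute values need checking.
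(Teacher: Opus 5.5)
Your proof is correct and is essentially the paper's argument: Proposition~\ref{prop:EdCrit} plus invariance under translations by $\mathbb{Z}[\omega_d]$, using the same walks. These are $0\to1$ together with $0\to\frac{-1}{\omega_2}\to\omega_2$, $0\to\frac{1}{\overline\omega_7}\to\omega_7$, and $0\to\frac1{\overline\omega_{11}}\to\frac{\omega_{11}}2\to\frac2{\overline\omega_{11}}\to\omega_{11}$. The only blemish is your $d=7$ sentence calling $0\to\omega_7$ ``already'' an edge, which you then correctly retract because $|\omega_7|=\sqrt2\neq1$, so you should simply delete that clause.
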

%%%%%%%%%%%%%%%%%
%%%%%%%%%%%%%%%%%

\begin{proof} We use Proposition~\ref{prop:EdCrit}.  Since 
$1\cdot1 - 0\cdot1=1$, $0\cdot \omega_2 - (-1)\cdot 1=1$, and $(-1)\cdot1 - \omega_2 \cdot \omega_2=1$ we conclude that $0\to 1$ and
$0 \to \tfrac{\omega_2}{2}=\tfrac{-1}{\omega_2} \to \omega_2$ are walks in $\mathcal E_{2}$.  Therefore, $\alpha \to \alpha+1$ and $ \alpha \to \alpha + \tfrac{\omega_2}{2} \to \alpha+\omega_2$  are walks in $\mathcal E_{2}$ for any $\alpha \in \mathbb{Z}[\omega_2]$. This shows  
assertion for $d=2$.  The case $d=7$ follows by the same reasoning because $\tfrac01\to\tfrac{1}{\overline \omega_7}\to\tfrac{\omega_7}{1}$ is a walk in $\mathcal{E}_7$.   
Finally, the case $d=11$ follows because
$0 \to \frac{1}{\overline \omega_{11}}\to\frac{\omega_{11}}{2} \to  
\frac{2}{\overline{\omega}_{11}} \to \omega_{11}$ is a walk in $\mathcal E_{11}$. 
\end{proof}
%%%%%%%%%%%%%%%%%%%%%%%%%%%
%%%%%%%%%%%%%%%%%%%%%%%%%%%

We now show that an edge of $\mathcal{E}_d$ cannot cross a wall (see Definition~\ref{def:standard}).  

\begin{theorem}\label{GeodesicIntersectW}
Fix $d\in\{1,2,3,7,11\}$. Let $p_1,q_1,p_2,q_2\in\mathbb{Z}[\omega_d]$ with $|p_1q_2-p_2q_1|< \frac{2\Im(\omega_d)}{|\omega_d|}$ and a wall $X$ be given. If $X_1,X_2$  denote the two complementary components of $\mathbb{H}^3\setminus X$ then there is $i\in\{1,2\}$ such that the geodesic $\big(\frac{p_1}{q_1}, \frac {p_2}{q_2}\big)$ is contained in ${X_i}^{\mathrm{cl}}$. Thus $\big(\frac{p_1}{q_1}, \frac {p_2}{q_2}\big)$ cannot cross~$X$. 
\end{theorem}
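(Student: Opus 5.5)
The plan is to reduce the statement to an arithmetic inequality about how far the endpoints $\alpha=\frac{p_1}{q_1}$ and $\beta=\frac{p_2}{q_2}$ lie from a line in $\mathbb{C}$. A wall $X$ is the vertical half-plane $L\times\mathbb{R}_+$ (together with its boundary line), where $L\subset\mathbb{C}$ is the line through an edge $Y$ of $W_d$.

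\textbf{Step 1: geometric reduction.} If one endpoint is $\infty$, the geodesic is a vertical half-line. It either lies in $X$ or misses $X$, so it lies in some $X_i^{\mathrm{cl}}$. Otherwise $\big(\alpha,\beta\big)$ is a semicircle contained in the vertical plane over the segment $[\alpha,\beta]$. Its projection to $\mathbb{C}$ is the open segment between $\alpha$ and $\beta$. Hence the geodesic meets both open components $X_1,X_2$ only if $\alpha$ and $\beta$ lie \emph{strictly} on opposite sides of $L$. So it suffices to prove the following: if $\alpha,\beta$ lie strictly on opposite sides of $L$, then $|p_1q_2-p_2q_1|\ge \frac{2\Im(\omega_d)}{|\omega_d|}$. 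I expect this step, making the ``crossing'' notion precise including endpoints lying on $L$ and degenerate cases, to be the only delicate point. The rest is a computation.

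\textbf{Step 2: normalization by translations.} A translation $z\mapsto z+\tau$ with $\tau\in\mathbb{Z}[\omega_d]$ replaces $p_k$ by $p_k+\tau q_k$ and leaves $p_1q_2-p_2q_1$ unchanged. Using such translations, every edge line of $W_d$ becomes a line $\mathbb{R}\lambda$ through $0$ with $\lambda\in\{1,\omega_d,\overline{\omega}_d\}$:
\begin{itemize}
\item the lines through $0,1$ and through $\omega_d,\omega_d+1$ give $\lambda=1$;
\item the lines through $0,\omega_d$ and through $1,1+\omega_d$ give $\lambda=\omega_d$;
\item for $d\equiv 3\bmod 4$, the line through $1$ and $\omega_d$ has direction $\omega_d-1=-\overline{\omega}_d$, since $\omega_d+\overline{\omega}_d=1$, so $\lambda=\overline{\omega}_d$.
\end{itemize}
In every case $\lambda\in\mathbb{Z}[\omega_d]$ and $|\lambda|\le|\omega_d|$, because $|\omega_d|\ge1$.

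\textbf{Step 3: distance estimate.} The signed distance of $z$ from $\mathbb{R}\lambda$ is $\Im(z\overline{\lambda})/|\lambda|$. For $z=\alpha$ this equals
\[
\frac{\Im(p_1\overline{q_1}\,\overline{\lambda})}{|\lambda|\,|q_1|^2}.
\]
Since $p_1\overline{q_1}\overline{\lambda}\in\mathbb{Z}[\omega_d]$ and the imaginary parts of elements of $\mathbb{Z}[\omega_d]$ are integer multiples of $\Im(\omega_d)$, a point strictly off the line satisfies $\delta_1\ge \frac{\Im(\omega_d)}{|\lambda||q_1|^2}$. Likewise $\delta_2\ge \frac{\Im(\omega_d)}{|\lambda||q_2|^2}$ for $\beta$.

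\textbf{Step 4: conclusion.} Because $\alpha$ and $\beta$ lie on opposite sides, $|\alpha-\beta|\ge\delta_1+\delta_2$. By AM--GM,
\[
\delta_1+\delta_2\;\ge\;\frac{\Im(\omega_d)}{|\lambda|}\Big(\frac{1}{|q_1|^2}+\frac{1}{|q_2|^2}\Big)\;\ge\;\frac{2\Im(\omega_d)}{|\lambda|\,|q_1||q_2|}.
\]
On the other hand $|\alpha-\beta|=\frac{|p_1q_2-p_2q_1|}{|q_1||q_2|}$. Combining the two gives
\[
|p_1q_2-p_2q_1|\;\ge\;\frac{2\Im(\omega_d)}{|\lambda|}\;\ge\;\frac{2\Im(\omega_d)}{|\omega_d|},
\]
which contradicts the hypothesis. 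Hence the geodesic does not meet both open sides of $X$, i.e.\ it lies in some $X_i^{\mathrm{cl}}$.
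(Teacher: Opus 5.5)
Your proof is correct and is essentially the paper's argument. The paper likewise handles each wall through $0$ (directions $1$, $\omega_d$, and the third side for $d\equiv 3 \bmod 4$, reached by translation). It bounds the distance of each endpoint $\frac{p_i}{q_i}$ from the wall below by $\frac{\Im(\omega_d)}{|\lambda|\,|q_i|^2}$, using that imaginary parts of elements of $\mathbb{Z}[\omega_d]$ are integer multiples of $\Im(\omega_d)$, and derives a contradiction.

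The difference is only in the last step. The paper observes that the size-$\delta$ generalized Ford spheres at the two endpoints must be tangent on the geodesic yet are confined to opposite open sides of $X$. You unpack that tangency directly via $|\alpha-\beta|=\frac{|p_1q_2-p_2q_1|}{|q_1||q_2|}$ and AM--GM, and you also treat the $\infty$ and on-the-wall endpoint cases that the paper passes over.
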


\begin{proof}
Let $\delta=|p_1q_2-p_2q_1| \in\mathbb{N}$. Note that $\delta<\frac{2\Im(\omega_d)}{|\omega_d|}$ by assumption. Let $i\in\{1,2\}$. Assume that $\frac{p_i}{q_i} \in X_i$ for $i\in\{1,2\}$.   
We first show that the Ford sphere $F_i$ of size $\delta$ for $\frac {p_i}{q_i}$ satisfies $F_i\cap X=\emptyset$. 

If  $X=(\mathbb{R}+0\cdot i)\times\mathbb{R}_+$ then the Euclidean distance between $\frac{p_i}{q_i} $ and $X$ is equal to $\big|\Im(\frac{p_i}{q_i})\big|$, which is nonzero by assumption. However, 
\[
\bigg|\Im\Big(\frac {p_i}{q_i} \Big)\bigg| =  \frac{|\Im(p_i \overline{q}_i)| }{|q_i^2|} \ge \frac{\Im(\omega_d)}{|q_i|^2}=\frac{2 \Im(\omega_d)}{2|q_i|^2}   \ge \frac{2\Im(\omega_d)}{|\omega_d|}\frac{1}{2|q_i|^2},
\]
and since the radius of $F_i$ is $\frac{\delta}{2|q_i|^2}$ with $\delta<\frac{2\Im(\omega_d)}{|\omega_d|}$ this implies that $F_i\cap X=\emptyset$.

If $X=\omega_d (\mathbb{R}+0\cdot i)\times\mathbb{R}_+$ then the Euclidean distance between $\frac{p_i}{q_i}$ and $X$ is equal to $|\omega_d| \big|\Im(\omega_d^{-1} \frac{p_i}{q_i})\big|$. In this case
\[
|\omega_d|\cdot\Big|\Im\Big(\omega_d^{-1} \frac{p_i}{q_i}\Big) \Big|
= \frac1{|\omega_d q_i^2|} |\Im(\overline{\omega}_d p_i \overline{q}_i)|
\ge 
\frac1{|\omega_d q_i^2|} \Im(\omega_d)
=
\frac{2\Im(\omega_d)}{|\omega_d|}\frac{1}{2|q_i|^2}
\]
and since the radius of $F_i$ is $\frac \delta{2|q_i|^2}$ with $\delta< \frac{2\Im(\omega_d)}{|\omega_d|}$ this implies that $F_i\cap X=\emptyset$.

For $d\in \{1,2\}$ the remaining two walls are just translations of the two walls we treated above. For $d\equiv 3\pmod{4}$ it remains to deal with the wall $L'\times\mathbb{R}_+$, where $L'$ is the line in $\mathbb{C}$ passing through $1$ and $\omega_d$. This is done in the same way as the wall $\omega_d(\mathbb{R}+0\cdot i) \times\mathbb{R}_+$. Summing up this yields that $F_i \cap X = \emptyset$ for each wall $X$ and $i\in\{1,2\}$

Thus $F_1 \cap F_2=\emptyset$. Since a geodesic from $\frac pq$ to $\frac rs$ with $|ps-rq|=\delta$ has to pass through a point in $F_1 \cap F_2$, the theorem is proved.
\end{proof}

The following corollary is an immediate consequence of this theorem.

\begin{corollary}
Suppose that $R_{1} \to R_{2}$ is an edge of $\mathcal E_{d}$ and $R_{1} \in 
(W_{d} \times \mathbb{R}_+)^{\circ}$, then $R_{2} \in (W_{d} \times \mathbb{R}_+)^{\mathrm{cl}}$.  
\end{corollary}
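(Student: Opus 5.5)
The plan is to derive the corollary directly from Theorem~\ref{GeodesicIntersectW}. Write $R_1=\frac{p_1}{q_1}$ and $R_2=\frac{p_2}{q_2}$ in lowest terms.

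\emph{Step 1.} Since $R_1\to R_2$ is an edge of $\mathcal E_d$, Proposition~\ref{prop:EdCrit} gives $|p_1q_2-p_2q_1|=1$.

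\emph{Step 2.} Check the hypothesis $1<\frac{2\Im(\omega_d)}{|\omega_d|}$ for each $d\in\{1,2,3,7,11\}$:
\begin{itemize}
\item for $d\not\equiv 3 \pmod 4$ the ratio equals $2$;
\item for $d\equiv 3\pmod 4$ it equals $\frac{\sqrt d}{\sqrt{(1+d)/4}}=\frac{2\sqrt d}{\sqrt{1+d}}$, which is $\sqrt3$, $\sqrt7$, $\sqrt{11/3}$ for $d=3,7,11$ respectively, all exceeding $1$.
\end{itemize}
So Theorem~\ref{GeodesicIntersectW} applies to the geodesic $(R_1,R_2)$ and every wall $X$. (If $R_2=\infty$, then $R_2$ lies in the closure of $W_d\times\mathbb{R}_+$ taken in $\mathbb{H}^3\cup\partial\mathbb{H}^3$, or we simply exclude this trivial case. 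If $R_1$ or $R_2$ lies on $X$, the Ford sphere argument in that proof is not needed; see Step 3.)

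\emph{Step 3.} Fix a wall $X$, i.e.\ the vertical half-plane over the line $L$ containing an edge $Y$ of $W_d$. Since $R_1$ lies in the interior of $W_d\times\mathbb{R}_+$, it lies strictly on the side of $L$ containing $W_d^\circ$; call the corresponding component $X_1$. The geodesic $(R_1,R_2)$ has endpoints $R_1$ and $R_2$ on $\partial\mathbb{H}^3$, and by the theorem it lies entirely in $X_1^{\mathrm{cl}}$ or $X_2^{\mathrm{cl}}$. Points of the geodesic near $R_1$ lie in $X_1$, because $R_1\notin X$ and $X$ is closed. Hence the whole geodesic lies in $X_1^{\mathrm{cl}}$, and so does its endpoint $R_2$, as a limit point. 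Thus $R_2$ lies on the closed side of $L$ containing $W_d$.

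\emph{Step 4.} Intersect over all walls. $W_d$ is convex (a rectangle or a triangle), so it equals the intersection of the closed half-planes determined by its edge lines. Therefore $R_2\in W_d$, i.e.\ $R_2\in (W_d\times\mathbb{R}_+)^{\mathrm{cl}}$, viewing boundary points as its trace on $\mathbb{C}\times\{0\}$.

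The only delicate point is Step 3: whether the theorem's proof covers the case where $R_2$ lies on $X$. In that case the desired conclusion holds trivially, so I would split off that case first. Otherwise both endpoints are off $X$ and the Ford sphere argument applies verbatim.
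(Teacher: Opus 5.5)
Your proof is correct and is exactly the argument the paper has in mind when it calls the corollary an immediate consequence of Theorem~\ref{GeodesicIntersectW}. An edge has $|p_1q_2-p_2q_1|=1<\frac{2\Im(\omega_d)}{|\omega_d|}$, so for each wall $R_2$ lies on the closed side containing $R_1$, and intersecting these closed half-planes gives $R_2\in W_d$. There is one arithmetic slip: for $d=7$ the ratio is $2\sqrt7/\sqrt8=\sqrt{7/2}$, not $\sqrt7$, though it still exceeds $1$. Also, the case $R_2=\infty$ is vacuous, since it would force $R_1\in\mathbb{Z}[\omega_d]$ and $W_d^\circ$ contains no such point.
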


In the introduction we mentioned that the Farey graph is planar because the geodesics defining its edges do not intersect.  The following corollary provides the disjointness of the geodesic edges of $\mathcal{E}_d$ also for the complex case.

\begin{corollary}\label{cor:geocor}
Let $\big(\frac {p_1}{q_1}, \frac {p_2}{q_2}\big)$ and $\big(\frac {r_1}{s_1}, \frac {r_2}{s_2}\big)$ be two geodesics with $|p_1q_2-p_2q_1|< \frac{2\Im(\omega_d)}{|\omega_d|}$, and $|r_1s_2-r_2s_1| < \frac{2\Im(\omega_d)}{|\omega_d|}$. Then they are either disjoint with zero or one common cusp, or they are equal.
\end{corollary}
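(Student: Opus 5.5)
Since $\frac{2\Im(\omega_d)}{|\omega_d|}\le 2$ for all $d\in\{1,2,3,7,11\}$ and the determinants are nonzero norms of integers, both hypotheses force $|p_1q_2-p_2q_1|=|r_1s_2-r_2s_1|=1$. (We also need $\frac{2\Im(\omega_d)}{|\omega_d|}>1$ in each case, so that the condition is not vacuous.) By Proposition~\ref{prop:EdCrit}, both geodesics are therefore edges of $\mathcal E_d$. Hence there is $A\in\mathrm{PSL}(2,\mathbb{Z}[\omega_d])$ with $A(\infty)=\frac{p_1}{q_1}$ and $A(0)=\frac{p_2}{q_2}$. Since $A^{-1}$ is a hyperbolic isometry preserving the determinant condition, we may replace both geodesics by their images under $A^{-1}$. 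The first geodesic then becomes $(\infty,0)=\{0\}\times\mathbb{R}_+$, and the second is still a geodesic $\big(\frac{r}{s},\frac{r'}{s'}\big)$ with determinant $1<\frac{2\Im(\omega_d)}{|\omega_d|}$.

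The key step is to apply Theorem~\ref{GeodesicIntersectW} to the walls through the origin. The vertical line $\{0\}\times\mathbb{R}_+$ is the intersection of the two walls $(\mathbb{R}+0i)\times\mathbb{R}_+$ and $\omega_d\mathbb{R}\times\mathbb{R}_+$, which meet only along that line because $1$ and $\omega_d$ are $\mathbb{R}$-linearly independent. By Theorem~\ref{GeodesicIntersectW}, the second geodesic lies in the closure of one side of each wall. Suppose it meets $\{0\}\times\mathbb{R}_+$ at a point $P$ but is not equal to it. A geodesic (a semicircle orthogonal to $\mathbb{C}$, or a vertical half-line) that touches a vertical half-plane from one closed side at an interior point must lie entirely in that half-plane: a circle arc tangent to a plane at an interior point without crossing it would have to lie in the plane, since a semicircle perpendicular to $\mathbb{C}$ meeting a vertical plane non-transversally lies in it. So the second geodesic lies in both walls, hence in their intersection, the line itself. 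Being a complete geodesic inside this line, it equals $(\infty,0)$, which contradicts the assumption that it is different.

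I expect this tangency argument to be the main obstacle. I would handle it concretely. The geodesic is the intersection of a vertical plane $V$ with a sphere or plane orthogonal to $\mathbb{C}$. If $V$ differs from the wall plane $H$, then $V\cap H$ is a vertical line $\ell$ through $P$. If $\ell$ is the line above $0$, the geodesic meets $\ell$ transversally, so it crosses any vertical plane containing $\ell$ other than $V$, and in particular it crosses one of the two walls unless $V$ is that wall. If $\ell$ is another vertical line, then $P\notin\ell$, contradicting the choice of $P$. So if the geodesics meet at an interior point, the second lies in one wall plane and meets the other wall transversally at $P$. Such a transversal crossing contradicts Theorem~\ref{GeodesicIntersectW} unless the geodesic equals the vertical line.

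Finally, if the geodesics are disjoint in $\mathbb{H}^3$, their common cusps form a subset of their endpoint sets. Two distinct geodesics cannot share both endpoints, since a geodesic is determined by its endpoints. Hence they have zero or one common cusp, and transporting back by $A$ gives the statement.
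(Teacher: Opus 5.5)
\paragraph{The reduction to modulus $1$ is false.} Your first step fails. $|p_1q_2-p_2q_1|$ is the square root of the norm of an element of $\mathbb{Z}[\omega_d]$, not a norm itself. So besides $1$ it can take the values $\sqrt2$ (for $d\in\{1,2,7\}$) and $\sqrt3$ (for $d\in\{2,11\}$). These lie below $\frac{2\Im(\omega_d)}{|\omega_d|}$, which equals $2,2,\sqrt3,\sqrt{7/2},\sqrt{11/3}$ for $d=1,2,3,7,11$; only for $d=3$ is modulus $1$ forced. The paper itself applies Theorem~\ref{GeodesicIntersectW} with $\delta=|\omega_d|$ and $\delta=\sqrt3$ in the proof of Proposition~\ref{lem:GeodesicIntersectGeneral}.

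For such geodesics your normalization is unavailable, because $\mathrm{PSL}(2,\mathbb{Z}[\omega_d])$ preserves the quantity $|p_1q_2-p_2q_1|$ of a pair of cusps. Worse, the statement is false there:
\begin{itemize}
\item For $d=1$, the geodesics $(0,1+i)$ and $(1,i)$ both have determinant of modulus $\sqrt2<2$. Both are semicircles centered at $\frac{1+i}{2}$ with radius $\frac{\sqrt2}{2}$, so they meet at the point above $\frac{1+i}{2}$ at height $\frac{\sqrt2}{2}$, yet they are not equal.
\item The diagonals $(0,\omega_d)$ and $\big(\infty,\frac{1}{\overline{\omega}_d}\big)$ of the Farey quadrangle $\square$ (for $d=2,7$, modulus $\sqrt2$) cross in the same way.
\item The same two geodesics, now diagonals of the Farey hexagon (for $d=11$, modulus $\sqrt3$), also cross.
\end{itemize}

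\paragraph{What you actually prove.} No argument can establish the statement as written. The paper's one-line proof has the same implicit restriction: its ``without loss of generality'' step needs the first geodesic to be an edge of $\mathcal{E}_d$. The true version is: if one of the two geodesics has determinant of modulus $1$ and the other satisfies the bound, then they are disjoint with at most one common cusp, or equal. Your argument proves exactly this once you replace your first sentence by that explicit hypothesis, instead of claiming it follows from the bound.

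With that hypothesis your route is the paper's. You move the edge to $(\infty,0)$ using Proposition~\ref{prop:EdCrit}, then apply Theorem~\ref{GeodesicIntersectW} to the two walls through $0$. Your transversality discussion correctly spells out the paper's ``follows immediately''. After normalizing, the second geodesic only needs to satisfy the bound, not have modulus $1$, since Theorem~\ref{GeodesicIntersectW} is stated under the bound. This corrected version is all that is used later, in Case~3 of Proposition~\ref{lem:GeodesicIntersectGeneral}, where both geodesics are edges.
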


\begin{proof}
We may assume without loss of generality that $\frac{p_1}{q_1}=\frac01$ and $\frac{p_2}{q_2} = \frac 10$. Then the result follows immediately from Theorem~\ref{GeodesicIntersectW}.
\end{proof}

In the complex case, not only the geodesics are disjoint but even the Farey polygons (whose boundary lines are edges of $\mathcal{E}_d$) are disjoint. This is the content of the next result that will also be important when we deal with a generalization of the Farey tessellation in Section~\ref{sec:polyhedron}.

\begin{proposition}\label{lem:GeodesicIntersectGeneral}
Let $d\in\{1,2,3,7,11\}$ and let $\Pi_1$ and $\Pi_2$ be distinct Farey polygons. Then their intersection $\Pi_1\cap \Pi_2$ is empty with zero or one common cusp, or $\Pi_1\cap \Pi_2$ is a common edge of $\Pi_1$ and $\Pi_2$ (which is a geodesic connecting two cusps).
\end{proposition}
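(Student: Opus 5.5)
We follow the pattern of Corollary~\ref{cor:geocor}: use the transitivity of $\mathrm{PSL}(2,\mathbb{Z}[\omega_d])$ to normalize one polygon, and then use Theorem~\ref{GeodesicIntersectW} to confine the other.

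\emph{Normalization.} Every edge of a Farey polygon is an edge of $\mathcal{E}_d$. For the standard polygons this is checked via Proposition~\ref{prop:EdCrit}; for instance, the cusps of $\square$ satisfy $|p_iq_{i+1}-p_{i+1}q_i|=1$ cyclically. Since $\mathrm{PSL}(2,\mathbb{Z}[\omega_d])$ acts transitively on edges, we may assume that one edge of $\Pi_1$ is $(0,\infty)$ and that $\Pi_1$ is one of the standard polygons $\triangle,\square,\hexagon$, or a digon, up to a stabilizer element of the form $z\mapsto u^2z$ with $u$ a unit. Then $\Pi_1$ lies in a vertical half-plane through $0$ and $\infty$ whose trace on $\mathbb{C}$ is a line through $0$ along an edge direction of $W_d$. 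Concretely, the trace contains $[0,1]$ for $\triangle$, and it contains $0,\omega_d,\frac1{\overline\omega_d},\ldots$ for $\square$ and $\hexagon$, which all lie on $\omega_d\mathbb{R}$ because $\frac1{\overline\omega_d}=\omega_d/|\omega_d|^2$. So $\Pi_1$ lies in a wall (or a translate of one).

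\emph{Case 1: $\Pi_2$ not contained in the same vertical half-plane $V$ spanned by $\Pi_1$.} Every edge of $\Pi_2$ is an edge of $\mathcal{E}_d$, so by Theorem~\ref{GeodesicIntersectW} it does not cross $V$. Because $\Pi_2$ is a hyperbolic polygon in a half-sphere bounded by its edges, $\Pi_2$ lies in the closure of one side of $V$. Hence $\Pi_1\cap\Pi_2\subset \Pi_2\cap V$. This set is the intersection of a hemisphere orthogonal to $\mathbb{C}$ with a vertical plane, so it is a single geodesic $g$ or empty. Applying Theorem~\ref{GeodesicIntersectW} once more with walls transverse to~$V$, $g\cap\Pi_2$ is either an edge of $\Pi_2$ or meets $\Pi_2$ only at cusps. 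The delicate point is to exclude $g$ being an interior chord of $\Pi_2$ tangent to $V$. This cannot happen: if the hemisphere of $\Pi_2$ met $V$ in a chord interior to $\Pi_2$, then $\Pi_2$ would have points on both sides of $V$, contradicting the confinement just shown. Thus $\Pi_1\cap\Pi_2$ is either contained in a common edge, is a cusp, or is empty. By Corollary~\ref{cor:geocor}, two edges that share a point coincide, so in the first situation the intersection is a full common edge of both polygons.

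\emph{Case 2: $\Pi_2\subset V$.} Then both polygons are ideal polygons in the same hyperbolic plane $V\cong\mathbb{H}^2$, with cusps in the set of points of $\mathbb{Q}(\sqrt{-d})\cup\{\infty\}$ lying on the line $V\cap\mathbb{C}$. Their edges are pairwise non-crossing by Corollary~\ref{cor:geocor}. We will show that on this line the Farey polygons contained in $V$ tessellate $V$, like the classical Farey tessellation. Identify $V$ with $\mathbb{H}^2$ via $\mathbb{R}$ or $\omega_d\mathbb{R}$. The stabilizer of $V$ in the Bianchi group contains a Hecke-type group ($H_4$ for $d=2$, $H_6$ for $d=3$, etc.), under which the standard polygon is a fundamental ideal polygon, and the polygons in $V$ are exactly its images.

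Once this is established, two distinct polygons of a tessellation by ideal polygons with non-crossing edges meet only in a common edge or a cusp. The remaining check is that no polygon contains a cusp of another polygon in its interior. This follows because every point of $\mathbb{Q}(\sqrt{-d})$ on the line is a cusp in the orbit, and such points can lie in the interior of an ideal polygon only on the boundary at infinity, never inside.

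\emph{Main obstacle.} We expect the hardest step to be the case analysis showing that a polygon lies on one closed side of every wall-type plane containing an edge of another polygon. This needs the strict bound $1<\frac{2\Im\omega_d}{|\omega_d|}$, which must be verified for each $d$; it holds, since for instance $d=11$ gives $\sqrt{11}/\sqrt3>1$. It also requires handling the non-generic walls through $\frac1{\overline\omega_d}$ and $\frac{2}{\overline\omega_d}$ by translation and unit rotation.
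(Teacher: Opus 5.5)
\textbf{There is a genuine gap in your Case~1.} The step ``every edge of $\Pi_2$ does not cross $V$, hence $\Pi_2$ lies in the closure of one side of $V$'' fails when two \emph{non-adjacent} cusps of $\Pi_2$ lie on the trace line $L=V\cap\mathbb{C}$.

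Consider the configuration you must rule out: a Farey quadrangle ($d\in\{2,7\}$) with one diagonal in $V$ and its two remaining cusps on opposite sides of $L$. Each of its four edges has one endpoint on $L$ and otherwise lies in an open side of $V$. So no edge crosses $V$, and Theorem~\ref{GeodesicIntersectW} with $\delta=1$ gives nothing. Yet $\Pi_2$ would straddle $V$, and $\Pi_2\cap V$ would be the diagonal, which a priori could cut through the interior of $\Pi_1$. The same issue arises for a Farey hexagon ($d=11$) with a diagonal in $V$.

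Your dismissal of this case (``then $\Pi_2$ would have points on both sides of $V$, contradicting the confinement just shown'') is circular. The confinement was deduced only from the edges, and this is exactly the configuration the edges cannot exclude. Consistently, your ``main obstacle'' paragraph invokes only the bound $1<\frac{2\Im(\omega_d)}{|\omega_d|}$, which is not enough.

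\textbf{The missing idea} is the core of the paper's Case~3 ($k=2$ cusps on the wall): apply Theorem~\ref{GeodesicIntersectW} to non-adjacent cusps, i.e.\ with $\delta>1$.
\begin{itemize}
\item For $\square$ both diagonals have $|ps-qr|=|\omega_d|=\sqrt2$. This is below $\frac{2\Im(\omega_d)}{|\omega_d|}$, which equals $2$ for $d=2$ and $\sqrt{7/2}$ for $d=7$. So the complementary diagonal cannot have its endpoints on opposite sides of $V$.
\item For $\hexagon$ the diagonals have values $\sqrt3$ and $2$, while $\frac{2\Im(\omega_{11})}{|\omega_{11}|}=\sqrt{11/3}\approx 1.915$ lies strictly between them. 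So you cannot use a long diagonal directly. Instead take the two neighbours of an endpoint of the diagonal lying in $V$: they would lie on opposite sides, and they have value $\sqrt3$.
\end{itemize}
With this, all cusps of $\Pi_2$ off $L$ lie on one side, and any two cusps on $L$ are adjacent. Hence $\Pi_2\cap V$ is empty or an edge $g$ of $\Pi_2$.

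You still need one small extra step here. If $g\cap\Pi_1\neq\emptyset$, either $g$ meets an edge of $\Pi_1$, and Corollary~\ref{cor:geocor} forces equality, or $g$ lies inside $\Pi_1$. The latter is impossible, because a chord of value $1$ between cusps of a Farey polygon is an edge.

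\textbf{Minor points.}
\begin{itemize}
\item $H_6$ occurs for $d=11$, not $d=3$; for $d\in\{1,3\}$ all walls carry the classical Farey graph.
\item Your Case~2 relies on the wall tessellation of Section~\ref{sec:hecke}, plus the unproved claim that every Farey polygon contained in $V$ is a tile. The paper's Case~4 avoids this: it uses only non-crossing edges and the fact (Figure~\ref{fig:fareyhex}) that all diagonals of Farey polygons have value $>1$, so no Farey polygon can sit strictly inside another.
\end{itemize}
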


\begin{proof}
Without loss of generality we may assume that $\Pi_1$ is contained in a wall $X$ of $W_d\times\mathbb{R}$. Moreover, $\Pi_2$ is an $n$-gon with $n\in\{2,3,4,6\}$. It is clear that the number $k$ of cusps of $\Pi_2$  contained in $X$ satisfies $k\in\{0,1,2,n\}$. We subdivide the proof according to the value of~$k$.

Case 1: $k=0$. We claim that the cusps of $\Pi_2$ all lie on the same side of $X$. Suppose that this is wrong. Then there exist two cusps $\frac pq$ and $\frac rs$ of $\Pi_2$ lying on different sides of $X$. By the definition of a Farey polygon we may assume that the $\frac pq$ and $\frac rs$ are chosen in a way that $|ps - qr|=1$. But this contradicts Theorem~\ref{GeodesicIntersectW} (since $1<\frac{2\Im(\omega_d)}{|\omega_d|}$) and the claim follows. Since the claim implies that $\Pi_1 \cap \Pi_2 \subset X \cap \Pi_2  = \emptyset$ with no common cusp, this case is finished.

Case 2: $k=1$. Let $v$ be the cusp of $\Pi_2$ contained in $X$. As in case~1 we see that all the other cusps of $\Pi_2$ lie on the same side of $X$. Thus $\Pi_1 \cap \Pi_2 \subset X \cap \Pi_2  \subset \{v\}$. This implies that $\Pi_1 \cap \Pi_2=\emptyset$ and $\Pi_1$ and $\Pi_2$ have zero or one common cusp.

Case 3: $k=2$. Let $\frac pq$ and $\frac rs$ be the two cusps of $\Pi_2$ contained in $X$. We claim that $|ps-qr|=1$. Assume that this is wrong. Then $\Pi_2$ cannot be a Farey digon or triangle. If $\Pi_2$ is a Farey quadrangle then $d\in\{2,7\}$ and $|ps-qr|=|\omega_{d}|$ (see Figure~\ref{fig:fareyhex}). In this case the diagonal of the Farey quadrangle is contained in $X$. Thus the other two cusps $\frac{p'}{q'}$ and $\frac{r'}{s'}$ of $\Pi_2$ lie on opposite sides of $X$. As in this case we have $|p's'-q'r'|=|\omega_{d}|$, Theorem~\ref{GeodesicIntersectW} (for $d\in\{2,7\}$ we have $|\omega_{d}|<\frac{2\Im(\omega_{d})}{|\omega_{d}|}$) gives a contradiction. Thus the claim is true also for Farey quadrangles. If $\Pi_2$ is a Farey hexagon then $d=11$ and $|ps-qr|\in \{\sqrt{3},2\}$. According to Figure~\ref{fig:fareyhex} there exist two other cusps $\frac{p'}{q'}$ and $\frac{r'}{s'}$ of $\Pi_2$ that lie on opposite sides of $X$. Moreover, we can choose these cusps in a way that $|ps-qr|=\sqrt{3}$. Since this contradicts Theorem~\ref{GeodesicIntersectW} (for $d=11$ we have $\sqrt{3}<\frac{2\Im(\omega_{11})}{|\omega_{11}|}$) the claim also holds for Farey hexagons.
\begin{figure}
\begin{tikzpicture}[scale=0.3]

\node at (-8.5,8.5) {$\frac{1}{\overline{\omega}_d}$};
\node at (8.5,8.5) {$\frac{\omega_d}{1}$};
\node at (-8.5,-8.5) {$\frac{0}{1}$};
\node at (8.5,-8.5) {$\frac{1}{0}$};

\node at (4.7,3) {$|\omega_d|$};
\node at (-4.7,3) {$|\omega_d|$};
\node at (-4.7,-3) {$|\omega_d|$};
\node at (4.7,-3) {$|\omega_d|$};

\node at (15,8.5) {$\frac{\omega_d}{2}$};
\node at (25,8.5) {$\frac{1}{\overline{\omega}_d}$};
\node at (10,0) {$\frac{2}{\overline{\omega}_d}$};
\node at (29.5,0) {$\frac{0}{1}$};
\node at (15,-8.5) {$\frac{\omega_d}{1}$};
\node at (25,-8.5) {$\frac{1}{0}$};

\node at (20,0.7) {$2$};
\node at (24,4.2) {$|\omega_d|$};
\node at (16,4.2) {$|\omega_d|$};
\node at (16,-4.2) {$|\omega_d|$};
\node at (24,-4.2) {$|\omega_d|$};

\node[draw=none,fill=none] at (0,0){\includegraphics[width=.4\textwidth]{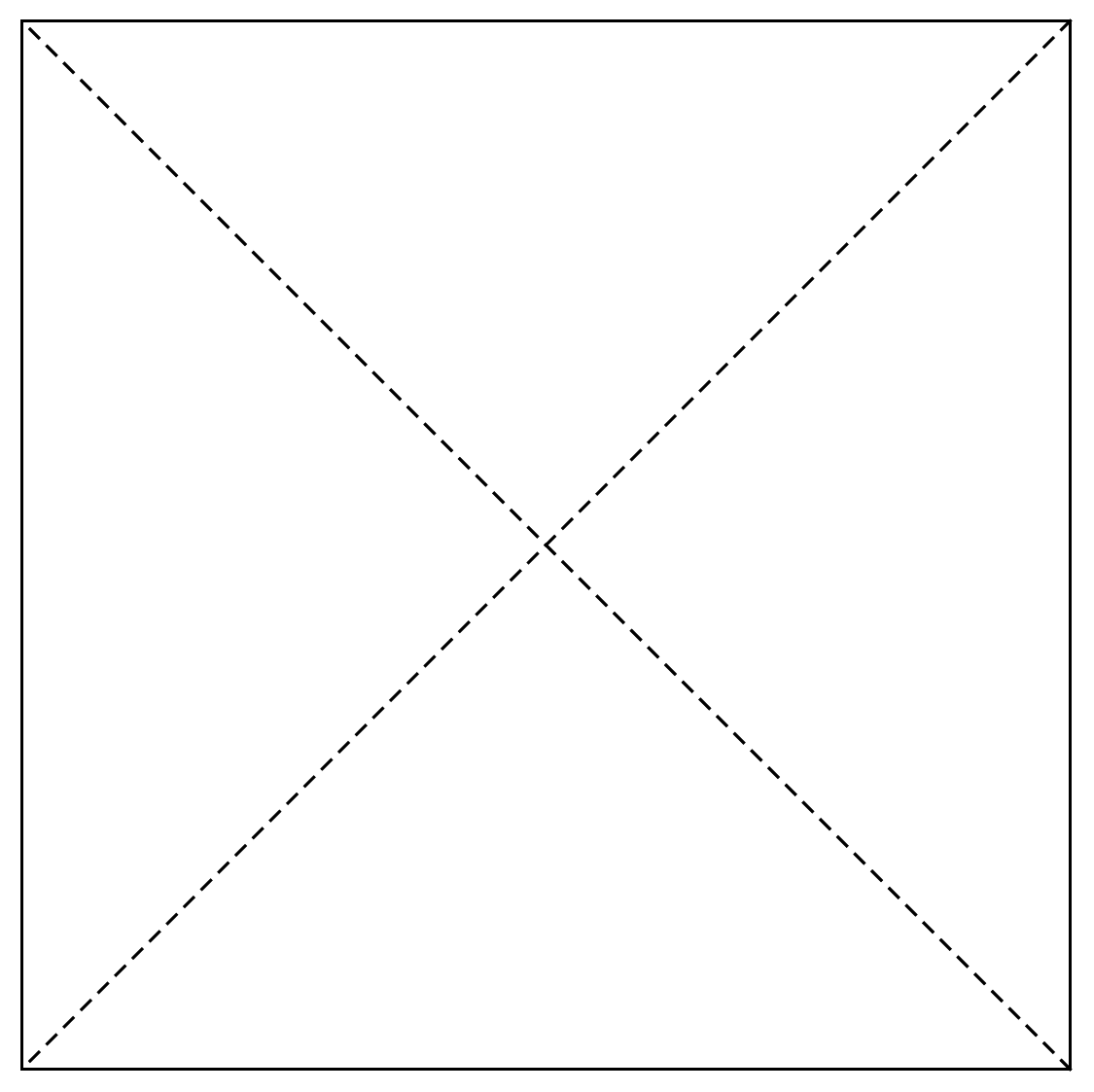}};
\node[draw=none,fill=none] at (20,0){\includegraphics[width=.46\textwidth]{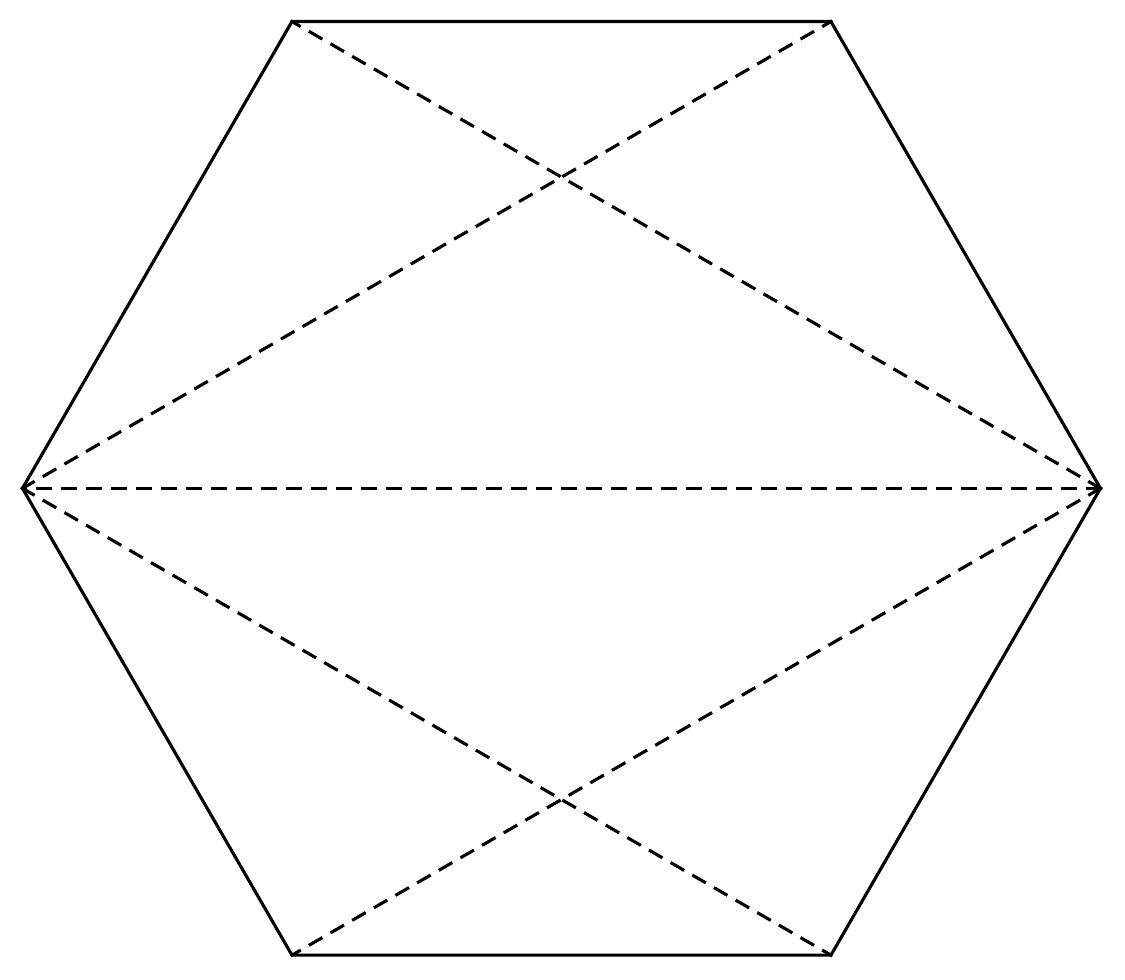}};
\end{tikzpicture}
\caption{The Farey quadrangle $\square$ and the Farey hexagon $\hexagon$. The numbers on the dashed lines between cusps $\frac pq$ and $\frac rs$ are the values of $|ps-qr|$ (for the hexagon, not all connecting lines are drawn, however, the corresponding values of $|ps-qr|$ are symmetric as can be confirmed by a short direct calculation). \label{fig:fareyhex}}
\end{figure}

The claim now immediately yields that $\Pi_1 \cap \Pi_2 \subset X\cap \Pi_2 = \big(\frac pq,\frac rs\big)$. If $\Pi_1 \cap \Pi_2$ is equal to this geodesic we are done. Thus we may assume that at least one of the cusps $\frac pq,\frac rs$ is not a cusp of $\Pi_1$. Without loss of generality assume that this is $\frac pq$. Suppose that $\Pi_1 \cap \big(\frac pq,\frac rs\big) \not=\emptyset$. Then only a part of $\big(\frac pq,\frac rs\big)$ is contained in $\Pi_1$. Let $x$ be the first point of $\big(\frac pq,\frac rs\big)$ (starting from $\frac pq$) that is contained in $\Pi_1$. Then $x$ is contained in some geodesic $\big(\frac{p'}{q'},\frac{r'}{s'}\big)$ bounding $\Pi_1$. Thus $x\in \big(\frac pq,\frac rs\big)\cap \big(\frac{p'}{q'},\frac{r'}{s'}\big)$, but Corollary~\ref{cor:geocor} implies that such an $x$ cannot exist. This shows that $\Pi_1 \cap \Pi_2=\emptyset$ where $\Pi_1$ and $\Pi_2$ have the common cusp $\frac rs$.

Case 4: $k=n$ ($n\ge 3$). Let $\big(\frac pq, \frac rs\big)\in\mathcal{E}_d$ be an edge of $\Pi_2$. As in case~3 we can prove that $\big(\frac pq, \frac rs\big)$  is either a subset of $\Pi_1$ or disjoint from $\Pi_1$. If $\big(\frac pq, \frac rs\big)$ is a subset of $\Pi_1$ its cusps have to be cusps of $\Pi_1$. Because $\big(\frac pq, \frac rs\big)$ is an edge of $\Pi_2$, we have $|ps-qr|=1$. Thus $\big(\frac pq, \frac rs\big)$ therefore has to coincide with an edge of $\Pi_1$ (see Figure~\ref{fig:fareyhex}). Thus $\Pi_1\cap\Pi_2$ consists of a union of edges. If $\Pi_1\cap\Pi_2$ has more than two cusps, it contains all edges connecting these cusps. Moreover, if it contains more than two edges then $\Pi_1= \Pi_2$ ($\Pi_1 \subsetneq \Pi_2$ and $\Pi_1 \supsetneq \Pi_2$ are impossible, see Figure~\ref{fig:fareyhex}) which is excluded. Thus, $\Pi_1\cap\Pi_2$ is as specified in the statement.
\end{proof}

\section{The subgraph of $\mathcal{E}_d$ on the walls and Hecke groups} \label{sec:hecke}

In the present section we consider subgraphs of the Farey graph $\mathcal{E}_d$, $d\in\{1,2,3,7,11\}$, whose edges lie in the walls of $W_d\times\mathbb{R}_+$. These graphs are of interest in their own right because they are related to certain Hecke groups. Moreover, we will need these graphs in order to establish our results on Farey tessellations of $\mathbb{H}^3$.

Obviously, the subgraph of $\mathcal E_{d}$ whose edges lie in the wall  $(\mathbb{R}+ 0\cdot i)\times \mathbb{R}_+$ is isomorphic to the classical Farey graph depicted in Figure~\ref{fig:ClassicalFarey}. For $d\in \{1,3\}$ the same is true for all the other walls of $W_d\times\mathbb{R}_+$, and for $d=2$ it is also true for the wall $(\mathbb{R}+ \omega_2)\times \mathbb{R}_+$.
Thus we only have to consider the remaining walls of $W_d\times\mathbb{R}_+$ for $d \in\{2, 7, 11\}$. It turns out that the the subgraphs of $\mathcal E_{d}$ lying in one of these walls are related to the Hecke groups of indexes $4$ and $6$ defined by 
\[ 
H_{4} =\left\langle \begin{pmatrix} 0 & 1 \\ -1 & 0 \end{pmatrix}, \, 
\begin{pmatrix} 1 & \sqrt{2} \\ 0 & 1  \end{pmatrix} \right\rangle \qquad\text{and}\qquad
H_{6} = \left\langle \begin{pmatrix} 0 & 1 \\ -1 & 0 \end{pmatrix}, \, 
\begin{pmatrix} 1 & \sqrt{3} \\ 0 & 1  \end{pmatrix} \right\rangle,
\]
respectively. This relation  will be established via the following real continued fraction algorithms.
Write $\mathbb I_{4} = [0, \sqrt{2})$ and $\mathbb I_{6} = [0, \sqrt{3})$ and define
\[
\lceil x \rceil_{4} = k \sqrt{2} \,\, \mbox{if} \,\,x \in \big( (k-1) \sqrt{2}, 
k\sqrt{2} \big] , \quad\text{and}\quad  \lceil x \rceil_{6} = k \sqrt{3} \,\, \mbox{if} \,\, x \in \big( (k-1) \sqrt{3}, k\sqrt{3} \big]
\]
for $x \in \mathbb R$.  For $\ell\in\{4,6\}$  we define the continued fraction map $F_{\ell}:\mathbb I_{\ell}\to\mathbb I_{\ell}$ by 
\[
F_{\ell}(x) = \left\{\begin{array}{lll}
\big\lceil \tfrac{1}{x} \big\rceil_{\ell} - \tfrac{1}{x} & \mbox{if} & x \ne 0, \\
0 & \mbox{if} & x = 0. \end{array}\right.
\]
As usual, the partial quotients of $x\in \mathbb I_{\ell}$ are given by $b_{\ell, n}(x) = \big\lceil \tfrac{1}{F_{\ell}^{n-1}(x)}\big\rceil$ provided that $F_{\ell}^{n-1}(x) \not\in\{0,\infty\}$, and we have 
\[
x=
\displaystyle \frac1{b_{\ell, 1}(x)- 
 \displaystyle  \frac1{b_{\ell, 2}(x) - 
 \displaystyle \frac1{\ddots}}}.
\]
The convergents of the algorithm $F_\ell$ are defined by 
\begin{equation}\label{eq:Fconv}
\begin{pmatrix} -p_{\ell, n-1}(x) & p_{\ell, n}(x) \\  -q_{\ell, n-1}(x) & q_{\ell, n}(x) 
\end{pmatrix} 
=
\begin{pmatrix} 0 & 1 \\ -1 & b_{\ell, 1}(x) \end{pmatrix} 
\begin{pmatrix} 0 & 1 \\ -1 & b_{\ell, 2}(x) \end{pmatrix}  \,\cdots \,
\begin{pmatrix} 0 & 1 \\ -1 & b_{\ell, n}(x) \end{pmatrix} \quad (n\ge1), 
\end{equation}
with $p_{\ell, 0}(x) = 0$ and $q_{\ell, 0}(x) = 1$. 
One can easily check by induction that for $k \ge 0$ we have $p_{\ell, 2k + 1}(x)$, $q_{\ell, 2k}(x) 
\in \mathbb N$ and $p_{\ell, 2k}(x)$, $q_{\ell, 2k+1}(x)\in  \sqrt{2}\cdot \mathbb N$.   
%%%%%%%%%%%%%%%%%%%%%%%
%%%%%%%%%%%%%%%%%%%%%%%
\begin{remark} We can define $F_{\ell}$ for any $\ell \ge 3$ in the same way.  In particular, the map $F_3$ is the so-called backward continued fraction map defined on $[0, 1)$. However, in this paper we only need the cases $\ell =4$ and $6$.  The map $F_{\ell}$ and the induced continued fraction is essentially the same as the one studied in Gr\"ochenig and Haas~\cite{G-H} for each $\ell \ge 3$.   
\end{remark}
%%%%%%%%%%%%%%%%%%%%%%%%%
%%%%%%%%%%%%%%%%%%%%%%%%%
Let $\ell\in\{4,6\}$. We can define the Farey graph associated with the Hecke group $H_{\ell}$ as the graph whose edges are given by $A(\infty)\to A(0)$ with $A \in H_{\ell}$. As in the classical case, the edge $A(\infty) \to  A(0)$ can be interpreted as the geodesic $(A(\infty), A(0))$ in $\mathbb{H}^2$. Moreover, we can associate  the Ford circles $\mathcal{F}(A(\infty)) = A(\{x + i\,:\, x\in\mathbb{R} \})$, $A\in H_\ell$, with $H_{\ell}$. Then $(x_{1}, x_{2})$ is an edge of the Farey graph associated with $H_{\ell}$ if and only if $\mathcal{F}(x_{1})$ and $\mathcal{F}(x_{2})$ are tangent to each other.  Moreover, the geodesic path from $x_{1}$ to $x_{2}$ passes through this tangent point. This is illustrated for $\ell=4$ in Figures~\ref{fig:Farey7wall} and~\ref{fig:Farey7wall2}.  
\begin{figure}[h]
\includegraphics[width=0.9\textwidth]{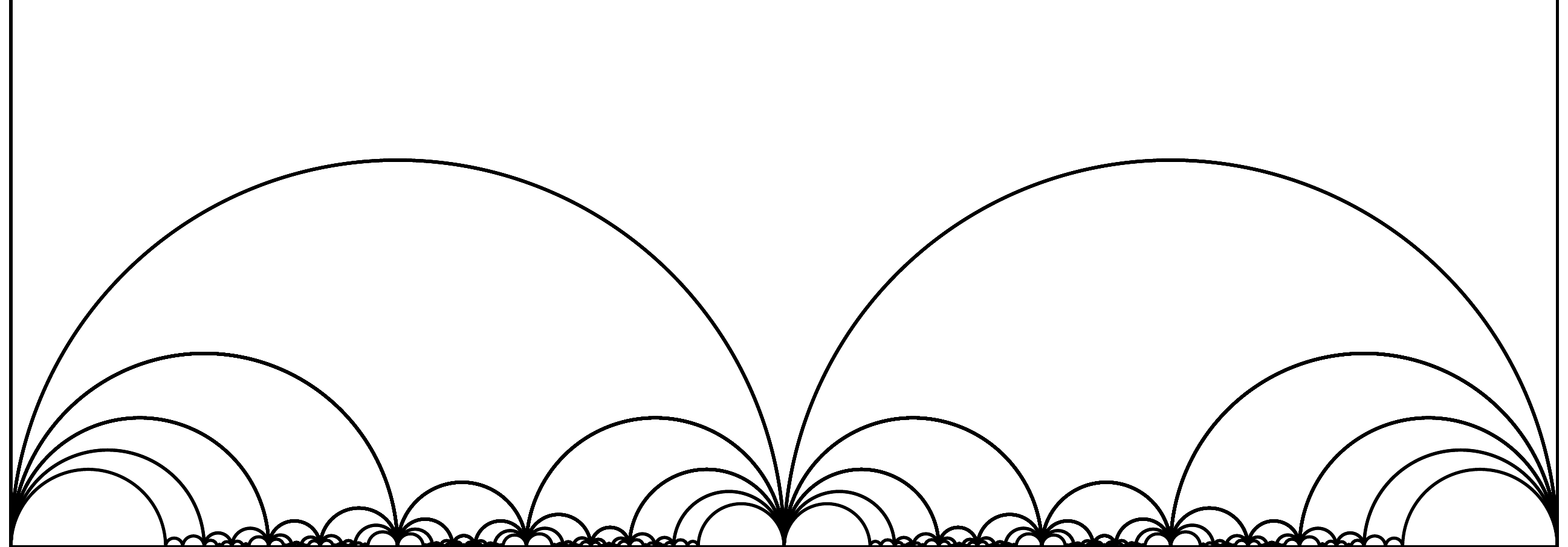} \\
\caption{The Farey graph associated with the Hecke group $H_4$. \label{fig:Farey7wall}}
\end{figure}
\begin{figure}[h]
\includegraphics[width=0.9\textwidth]{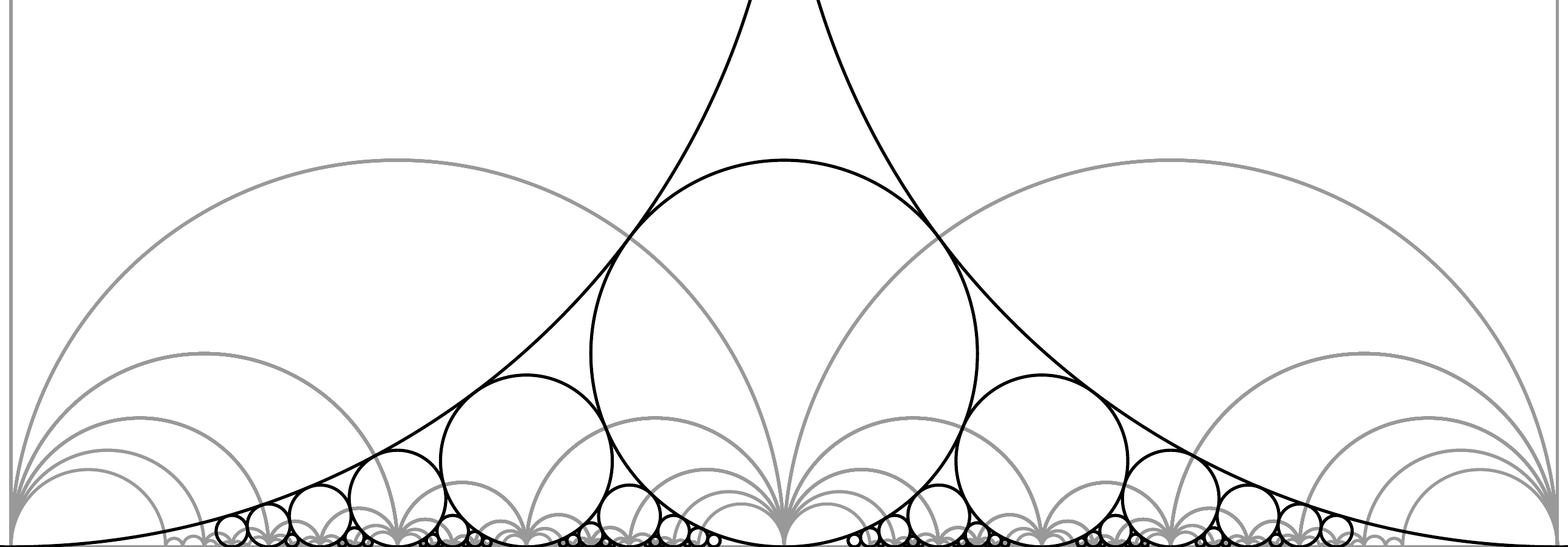} 
\caption{Interplay between the Farey graph of Figure~\ref{fig:Farey7wall} and Ford circles. 
\label{fig:Farey7wall2}}
\end{figure}

The algorithm $F_\ell$ is related to the Farey graph associated with the Hecke group $H_{\ell}$ in the following way.  By the Euclidean algorithm, we see that any element in $H_{\ell}(\infty) \cap \mathbb I_{\ell}$ can be expressed by a finite continued fraction expansion given by the algorithm $F_{\ell}$. Even more, from \eqref{eq:Fconv} one can derive that each edge of the Farey graph associated with the Hecke group $H_{\ell}$ corresponds to a geodesic of the form $\big( \frac{p_{\ell, n-1}(x)}{q_{\ell, n-1}(x)},  \frac{p_{\ell, n}(x)}{q_{\ell, n}(x)}\big)$, where $x\in H_{\ell}(\infty) \cap \mathbb I_{\ell}$. Thus this Farey graph is completely determined by the convergents of~$F_\ell$.

The following theorem shows that the restriction of the Farey graph $\mathcal{E}_d$ to certain walls is the Farey graph associated with the Hecke group $H_4$ for $d\in \{2,7\}$ and the Farey graph associated with the Hecke group $H_6$ for $d=11$.

%%%%%%%%%%%%%%%%% 
%%%%%%%%%%%%%%%%%
\begin{theorem} 
\label{th:Hecke}
\mbox{}
\begin{itemize}
\item[(i)]\; The subgraphs of $\mathcal E_{2}$ restricted to vertices on $\{y\omega_{2} : 0 \le y <1\}$ and restricted to the vertices on $\{1+y\omega_{2} : 0 \le y <1\}$ each are isomorphic to the Farey graph defined on~$\mathbb I_{4}$ associated with $H_{4}$.  

\item[(ii)]\;The subgraphs of $\mathcal E_{7}$ restricted to vertices on $\{y\omega_{7} : 0 \le y <1\}$ and restricted to the vertices on $\{1+y(\omega_{7}-1) : 0 \le y <1\}$ each are isomorphic to the Farey graph defined on $\mathbb I_{4}$ associated with $H_{4}$.  

\item[(iii)]\; The subgraphs of $\mathcal E_{11}$ restricted to vertices on $\{y \omega_{11} : 0 \le y <1\}$ and restricted to the vertices on $\{1+y(\omega_{11}-1) : 0 \le y <1\}$ each are isomorphic to the  Farey graph defined on $\mathbb I_{6}$ associated with $H_{6}$.

\end{itemize}
\end{theorem}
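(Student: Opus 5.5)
The plan is to reduce everything to the edge criterion of Proposition~\ref{prop:EdCrit}: an edge of $\mathcal E_d$ is a pair $\frac{p_1}{q_1},\frac{p_2}{q_2}$ with coprime representations and $|p_1q_2-p_2q_1|=1$. I will compare this with the analogous criterion for the Hecke Farey graph. For that I use the standard description of $H_\ell$ ($\ell\in\{4,6\}$, $\lambda=\sqrt2$ or $\sqrt3$). Every element of $H_\ell$ has one of the two shapes $\begin{pmatrix} a & b\lambda\\ c\lambda & d\end{pmatrix}$ or $\begin{pmatrix} a\lambda & b\\ c & d\lambda\end{pmatrix}$ with $a,b,c,d\in\mathbb Z$ and determinant $1$. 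Conversely, every such matrix lies in $H_\ell$, which follows from the Euclidean-type algorithm $F_\ell$ and~\eqref{eq:Fconv}. Hence $H_\ell(\infty)$ consists of the points $\frac{a}{c\lambda}$ and $\frac{a\lambda}{c}$ with $\gcd(a,c)=1$. Two such points, each written in this \emph{mixed} form (one entry an integer, the other an integer multiple of $\lambda$), are joined by an edge iff the corresponding $2\times2$ determinant has absolute value $1$.

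First I treat the wall through $0$ and $\omega_d$. For $d\in\{2,7\}$ the vertices on this line are $t\omega_d$ with $t\in\mathbb Q$, and $|\omega_d|=\sqrt2$. I intend to show that each such vertex has a coprime representation over $\mathbb Z[\omega_d]$ either as $\frac{a\omega_d}{c}$ or as $\frac{a}{c\overline\omega_d}$ with $a,c\in\mathbb Z$, depending on the parity of the denominator of $t$ (for $d=2$ this is $\frac{a}{c\overline\omega_2}=\frac{a\omega_2}{2c}$). I then define $\varphi(t\omega_d)=t\sqrt2$, i.e.\ $\frac{a\omega_d}{c}\mapsto\frac{a\sqrt2}{c}$ and $\frac{a}{c\overline\omega_d}\mapsto\frac{a}{c\sqrt2}$. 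The determinants match: for instance
\[
a\omega_d\cdot d'\overline\omega_d-c\cdot b' = 2ad'-b'c,
\]
which is exactly the Hecke determinant $a\sqrt2\cdot d'\sqrt2-b'c$. The other combinations give $\pm(ad'-b'c)$ times $\omega_d$ or $\overline\omega_d$, whose absolute value is $\sqrt2\,|ad'-b'c|\neq1$. This matches the fact that two points of the same shape in $H_4(\infty)$ are never Hecke neighbours. Together with Proposition~\ref{prop:EdCrit} this makes $\varphi$ a graph isomorphism onto the Hecke Farey graph, and restricting to $0\le y<1$ gives $\mathbb I_4$. For $d=11$ the same scheme works with $|\omega_{11}|=\sqrt3$, the two forms $\frac{a\omega_{11}}{c}$ and $\frac{a}{c\overline\omega_{11}}=\frac{a\omega_{11}}{3c}$ (cf.\ the hexagon cusps $\frac{\omega_{11}}2$, $\frac{2}{\overline\omega_{11}}$), and $\varphi(t\omega_{11})=t\sqrt3$. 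There the determinant becomes $3ad'-b'c$, which corresponds to $H_6$.

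For the second wall I use maps that preserve the criterion $|p_1q_2-p_2q_1|=1$, even when they are not in $\mathrm{PSL}(2,\mathbb Z[\omega_d])$. Examples are $z\mapsto z+1$, $z\mapsto 1-z$ (a matrix of determinant $-1$), and complex conjugation $z\mapsto\overline z$, which acts on numerators and denominators and conjugates the determinant. For $d=2$ the translation by $1$ carries the first wall onto the second. For $d\in\{7,11\}$ we have $\omega_d-1=-\overline\omega_d$, so $1+y(\omega_d-1)=1-\overline{y\omega_d}$. Thus $z\mapsto1-\overline z$ maps the first line onto the second and is an automorphism of $\mathcal E_d$.

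I expect the main obstacle to be the arithmetic step. One has to show that every reduced fraction $\frac pq\in\mathbb Q(\sqrt{-d})$ lying on the line $\mathbb R\omega_d$ admits, after multiplication by a unit (only $\pm1$ here), a coprime representation of one of the two mixed forms. One also has to check that different coprime representations give the same value of $|\det|$, which holds since only units intervene. I would attack this by writing $t=\frac mn$ in lowest terms and analysing the ideal $(m\omega_d,n)$. The primes above $2$ (ramified for $d=2$, split for $d=7$) or above $3$ (split for $d=11$) decide which form is coprime, and $\gcd=1$ is verified via explicit Bézout relations. The remaining checks, namely surjectivity onto $H_\ell(\infty)\cap\mathbb I_\ell$ and the same-shape case yielding no edges, are then routine.
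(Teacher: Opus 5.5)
Your argument is correct, and it takes a genuinely different route from the paper.

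\textbf{How the two routes differ.} The paper goes through continued fractions. It introduces the restricted algorithm $S_d$ on $\mathbb J_d$ and asserts (Proposition~\ref{prop:pqPQ}, ``by induction'') that its convergents agree with those of $F_\ell$ up to the unimodular factors $\overline\omega_d\sqrt{2/\ell}$ and $\omega_d\sqrt{2/\ell}$. From this it concludes that corresponding Ford spheres and Ford circles have equal radii. Since $x\sqrt{\ell/2}\mapsto x\omega_d$ is an isometry of lines, tangency, and hence adjacency, is preserved. Only the wall through $0$ and $\omega_d$ is done explicitly.

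You work purely arithmetically instead. You read off the reduced representation of $t\omega_d$ from the prime above $2$ (resp.\ $3$). You use the explicit description of $H_\ell$ as the group of mixed-shape matrices (equivalently, $H_4$ and $H_6$ are conjugate to $\Gamma_0(2)^+$ and $\Gamma_0(3)^+$). You then compare determinants through Proposition~\ref{prop:EdCrit}. This buys you several things:
\begin{itemize}
\item a self-contained edge-by-edge identity, $\lambda^2ad'-b'c$ on both sides;
\item an immediate reason why points of the same type are never adjacent;
\item an isomorphism on the whole line, not just the segment;
\item an explicit treatment of the second wall via the automorphisms $z\mapsto z+1$ and $z\mapsto 1-\overline z$ of $\mathcal E_d$.
\end{itemize}
The paper's route, in return, identifies the wall graphs with the convergent structure of $S_d$ and $F_\ell$, which it wants later for relating Hecke-type continued fractions to complex ones.

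The converse inclusion you need (every mixed-shape matrix of determinant $1$ lies in $H_\ell$) is the same Euclidean descent the paper uses implicitly when it says every Hecke edge joins consecutive $F_\ell$-convergents. It terminates because each step multiplies the lower-left entry by a factor at most $1/\sqrt2$ (resp.\ $\sqrt3/2$).

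\textbf{One statement needs tightening.} On the Hecke side, $\gcd(a,c)=1$ is not the right normalization for the determinant criterion. For example, $\sqrt2=\frac{2}{1\cdot\sqrt2}$ has $\gcd(2,1)=1$. Paired with $\infty=\frac10$ it gives determinant $\sqrt2$, although $\infty\to\sqrt2$ is an edge (via $\begin{pmatrix}1&\sqrt2\\0&1\end{pmatrix}$).

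The criterion does hold for the forms that actually occur as columns of elements of $H_\ell$:
\begin{itemize}
\item $\frac{a}{c\lambda}$ with $\gcd(a,\lambda^2c)=1$;
\item $\frac{a\lambda}{c}$ with $\gcd(\lambda^2a,c)=1$.
\end{itemize}
Every point of $\mathbb Q\lambda$ has exactly one such form, up to sign.

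Your $\varphi$ lands precisely on these forms. Coprimality of $\frac{a\omega_d}{c}$ in $\mathbb Z[\omega_d]$ forces $c$ odd (resp.\ $3\nmid c$). Coprimality of $\frac{a}{c\overline\omega_d}$ forces $a$ odd (resp.\ $3\nmid a$). So the proof goes through once you state this normalization explicitly.
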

%%%%%%%%%%%%%%
%%%%%%%%%%%%%%
\begin{remark}
In the case of $d=1$, the Ford spheres on the diagonal set $\{x+ xi : 0 \le x < 1\}$ 
has also the same structure as Ford circles associated with $ H_{4}$.  
For $d = 2$, the diagonal set $\{ x + x \omega_2 : 0 \le x < 1 \}$ has 
the same structure as $ H_{6}$ the same sense.  \\
In the case of $d = 7$, the diagonal set $\{ x + x(\omega_{7} + 1) 
: 0 \le x < 1 \}$ has the same structure with the subgraph of the original Farey graph which is induced from the even continued fractions.
\end{remark}    
%%%%%%%%%%%%%%
%%%%%%%%%%%%%%

In the proof of the theorem, we restrict ourselves to the case $\{y\omega_{d} : 0 \le y <1\}$, the other cases can be treated analogously. For $d\in\{2,7,11\}$, we consider the restricted complex continued fraction expansions defined on 
\[
\mathbb J_{d} = \{ z= y \omega_{d} \text{ or } z= y \overline{\omega}_{d} : 0 \le y < 1\}.
\]
They are defined by 
\[
S_{d}(z) = \left\{ \begin{array}{lll}
\left\lceil\tfrac{1}{z}\right\rceil_{d} - \tfrac{1}{z} & \mbox{for} & z \in \mathbb J_{d} \setminus 
\{0\}, \\
0 & \mbox{for} & z=0,  \end{array} \right.    
\]
with $y\in \mathbb{R}$
\[
\lceil z \rceil_{d} =
\begin{cases}
\lceil y \rceil \overline \omega_{d} & \text{if }z = y \omega_d, \\
\lceil y \rceil  \omega_{d} & \text{if }z = y \overline \omega_d. \\ 
\end{cases}
\]
It is easy to see that we have $S_{d}(\{y \omega_{d} : 0 \le y <1\}) = \{y \overline{\omega}_{d} : 0 \le y < 1\}$, and $S_{d}(\{y \overline{\omega}_{d} : 0 \le y <1\}) = \{y \omega_{d} : 0 \le y < 1\}$.  
We put
\[
B_{d, n}(z) = \left\{\begin{array}{lll} \left\lceil \tfrac{1}{S_{d}^{n-1}(z)}\right\rceil_d
&\mbox{if} & S_{d}^{n-1}(z) \in \mathbb J_{d} \setminus \{0\}, \\
\infty         & \mbox{if} &   S_{d}^{n-1}(z) = 0 \end{array} \right.    
\]
and have 
\[
z=\displaystyle \frac1{B_{d, 1}(z)- 
 \displaystyle  \frac1{B_{d, 2}(z) - 
 \displaystyle \frac1{\ddots}}}.
\]
for $ z \in \mathbb J_{d}$.  In particular, for $z \in \{y\omega_d  : 0 < y < 1\}$, we have 
$B_{d, 2k+1}(z), k \ge 0$, are of the form $m \overline{\omega}$, $m \in \mathbb N$ and 
$B_{d, 2k}(z), k \ge 1$, of the form $m \omega$, $m \in \mathbb N$.  As before, we put 
\[
\begin{pmatrix} -P_{d, n-1}(z) & P_{d, n}(z) \\  -Q_{d, n-1}(z) &Q_{d, n}(z) 
\end{pmatrix} 
=
\begin{pmatrix} 0 & 1 \\ -1 & B_{d, 1}(z) \end{pmatrix} 
\begin{pmatrix} 0 & 1 \\ -1 & B_{d, 2}(z) \end{pmatrix}  \,\,\cdots \,\, 
\begin{pmatrix} 0 & 1 \\ -1 & B_{d, n}(z) \end{pmatrix} \qquad (n\ge1), 
\] 
with $P_{d, 0}(z) = 0$ and $Q_{d, 0}(z) = 1$.

The Euclidean algorithm implies that the algorithm $S_{d}$ yields a finite continued fraction expansion for each $z \in \mathbb J_{d} \cap \mathbb Q(\sqrt{-d})$.  Thus $\mathbb J_{d} \cap \mathbb Q(\sqrt{-d}) = \left\{\tfrac{P_{d, n}(z)}{Q_{d, n}(z)} : z \in \mathbb J_{d}, n\ge 0 \right\}$. 

Set
\begin{equation}\label{eq:lld}
\ell=\ell(d)= \begin{cases} 4 &\text{if } d\in\{2,7\}, \\ 6 &\text{if } d=11. \end{cases}
\end{equation}
By induction, we have the following result.

%%%%%%%%%%%%%%%%%%%%%%%%%
\begin{proposition}\label{prop:pqPQ}
Let $p_{\ell,k}$ $q_{\ell,k}$, $P_{d,k}$, and $Q_{d,k}$ and $\ell=\ell(d)$ be defined as above. Then the following identities hold.
\begin{itemize}
\item[(i)]\; For $k \ge 0$, $d\in\{2, 7,11\}$ and $y \omega_d \in \mathbb J_{d}$ we have
\[
\begin{pmatrix} P_{d, 2k+1}(y \omega_d) \\  Q_{d, 2k+1}(y \omega_d) 
\end{pmatrix} 
= 
\begin{pmatrix}
p_{\ell, 2k+1}\Big(y \sqrt{\tfrac\ell2}\Big)  \\ q_{\ell, 2k+1}\Big(y \sqrt{\tfrac\ell2}\Big) \cdot
\overline\omega_d \sqrt{\tfrac2\ell}  \end{pmatrix}.
\]
\item[(ii)] \; For $k \ge 1$, $d\in\{2, 7,11\}$ and $y \omega_d \in \mathbb J_{d}$ we have
\[
\begin{pmatrix} P_{d, 2k}(y \omega_d) \\  Q_{d, 2k}(y \omega_d) 
\end{pmatrix} 
=  
\begin{pmatrix}
p_{\ell, 2k}\Big(y \sqrt{\tfrac\ell2}\Big) \cdot
\omega_d\sqrt{\tfrac2\ell}  \\ q_{\ell, 2k}\Big(y \sqrt{\tfrac\ell2}\Big)\end{pmatrix}.
\]
\end{itemize}
\end{proposition}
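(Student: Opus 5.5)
The plan is to prove first that the partial quotients and the iterates of $S_d$ and $F_\ell$ correspond term by term, and then to transfer this to the convergents by a diagonal conjugation of the matrix products. Throughout, write $x=y\sqrt{\ell/2}$.

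\emph{Step 1 (the norm of $\omega_d$).} A direct check gives $|\omega_2|^2=2$, $|\omega_7|^2=2$ and $|\omega_{11}|^2=3$. Hence in all three cases $|\omega_d|^2=\ell/2$. Moreover, the step size in $\lceil\cdot\rceil_\ell$ is exactly $\sqrt{\ell/2}$ (namely $\sqrt2$ for $\ell=4$ and $\sqrt3$ for $\ell=6$).

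\emph{Step 2 (digits and iterates).} Put $u_{\mathrm{odd}}=\overline\omega_d\sqrt{2/\ell}$ and $u_{\mathrm{even}}=\omega_d\sqrt{2/\ell}$. By Step~1 these satisfy $u_{\mathrm{odd}}u_{\mathrm{even}}=1$.

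For $z=y\omega_d$ with $0<y<1$ we have
\[
\frac1z=\frac{\overline\omega_d}{y\,\ell/2}, \qquad\text{so}\qquad \Big\lceil \frac1z\Big\rceil_d=m\,\overline\omega_d \quad\text{with } m=\Big\lceil \frac{1}{y\ell/2}\Big\rceil .
\]
On the other side,
\[
\frac1x=\frac{1}{y\sqrt{\ell/2}}, \qquad\text{so}\qquad \Big\lceil \frac1x\Big\rceil_\ell=m\sqrt{\ell/2}
\]
with the same $m$. Consequently $B_{d,1}(z)=b_{\ell,1}(x)\,u_{\mathrm{odd}}$. For the iterates,
\[
S_d(z)=\overline\omega_d\Big(m-\frac{1}{y\ell/2}\Big), \qquad F_\ell(x)=\sqrt{\ell/2}\,\Big(m-\frac1{y\ell/2}\Big).
\]
Thus $S_d(z)=y'\overline\omega_d$, where $y'\sqrt{\ell/2}=F_\ell(x)$.

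The computation starting from $y\overline\omega_d$ is symmetric, with $\omega_d$ and $\overline\omega_d$ interchanged. Induction on $n$ (the case $S_d^{n-1}(z)=0$ occurs simultaneously with $F_\ell^{n-1}(x)=0$) then gives
\[
B_{d,n}(z)=b_{\ell,n}(x)\,u_n, \qquad u_n=u_{\mathrm{odd}}\ (n\text{ odd}), \quad u_n=u_{\mathrm{even}}\ (n\text{ even}).
\]

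\emph{Step 3 (matrices).} Set $M(b)=\begin{pmatrix}0&1\\-1&b\end{pmatrix}$. A one-line computation gives
\[
M(bu)=\mathrm{diag}(1,u)\,M(b)\,\mathrm{diag}(u^{-1},1).
\]
Substitute this into the product $M(B_{d,1})\cdots M(B_{d,n})$. Between consecutive factors the diagonal matrix $\mathrm{diag}(u_k^{-1},1)\,\mathrm{diag}(1,u_{k+1})$ appears. Since $u_k^{-1}=u_{k+1}$, it equals the scalar matrix $u_{k+1}I$, which commutes with everything. Hence
\[
\prod_{k=1}^n M(B_{d,k})=(u_2\cdots u_n)\,\mathrm{diag}(1,u_1)\Big(\prod_{k=1}^n M(b_{\ell,k})\Big)\mathrm{diag}(u_n^{-1},1).
\]

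Now read off the second column, which is unaffected by the right-hand diagonal factor:
\[
(P_{d,n},Q_{d,n})=(u_2\cdots u_n)\,\big(p_{\ell,n},\,u_1q_{\ell,n}\big).
\]
For $n$ odd the scalar factors pair up as $u_2u_3=\cdots=u_{n-1}u_n=1$, so the scalar is $1$, and $u_1=\overline\omega_d\sqrt{2/\ell}$; this is (i). For $n$ even the scalar equals $u_n=\omega_d\sqrt{2/\ell}$, and $u_nu_1=1$, so $(P_{d,n},Q_{d,n})=\big(p_{\ell,n}\,\omega_d\sqrt{2/\ell},\,q_{\ell,n}\big)$; this is (ii). The case $n=0$ follows directly from the initial values.

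The main obstacle is Step~2. It requires the exact coincidence of the ceiling functions, which rests entirely on the identity $|\omega_d|^2=\ell/2$ together with the matching step size $\sqrt{\ell/2}$, and it requires tracking the alternation between $\omega_d$ and $\overline\omega_d$ through the induction. Once Step~2 is in place, Step~3 is purely formal.
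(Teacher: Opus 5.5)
Your proof is correct, and it is more explicit than the paper, which says only ``by induction''. The route implied there has two parts. First one shows the digit correspondence $B_{d,n}(y\omega_d)=b_{\ell,n}(y\sqrt{\ell/2})\,u_n$ with alternating units. Then one inducts on the three-term recurrences $P_{d,n}=B_{d,n}P_{d,n-1}-P_{d,n-2}$ and $Q_{d,n}=B_{d,n}Q_{d,n-1}-Q_{d,n-2}$ coming from the matrix product, handling odd and even $n$ together so as to track whether the unit sits on $P$ or on $Q$.

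Your Step~2 is exactly that digit correspondence. You also make explicit the fact $|\omega_d|^2=\ell/2$ on which everything rests, which the paper never states. Your Step~3 takes a different route: it replaces the parity-by-parity induction with the conjugation identity $M(bu)=\mathrm{diag}(1,u)M(b)\mathrm{diag}(u^{-1},1)$. The intermediate diagonal factors then collapse to scalars because $u_{\mathrm{odd}}u_{\mathrm{even}}=1$. This gives a closed formula for every $n$ at once and shows structurally why the unit factors alternate and cancel. The recurrence induction is more elementary but needs more bookkeeping.

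You also silently read $\lceil 1/z\rceil_d$ as the ceiling along the line through $1/z$, so that $B_{d,1}(y\omega_d)\in\mathbb{N}\,\overline\omega_d$, and you read $b_{\ell,n}$ with $\lceil\cdot\rceil_\ell$. The paper's displayed definitions contain typos at these points. Your reading is the one consistent with its statement $S_d(\{y\omega_d\})=\{y\overline\omega_d\}$.
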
 

Note that $\Big|\overline\omega_d \sqrt{\tfrac2\ell}  \Big| = \Big| \omega_d\sqrt{\tfrac2\ell} \Big|=1$, hence, associated numerators denominators have the same modulus. This will entail that the associated Ford circles or Ford spheres have the same radius.

We can now finish the proof of Theorem~\ref{th:Hecke}.

\begin{proof}[Proof of Theorem~\ref{th:Hecke}]
Let $\ell=\ell(d)$ be as in \eqref{eq:lld}. For any $R \in \mathbb J_{d} \setminus \{0\}$, there exist $0 < a < b \in \mathbb N$, $a, b$, coprime, such that $R = \tfrac{a}{b} \omega_d$ (or $R = \tfrac{a}{b}\overline\omega_d$, which is treated in the same way).  Thus there exists $m\ge 1$ such that $S_{d}^{m-1}(R) \ne 0$ and $S_{d}^{m}(R) = 0$.  Proposition~\ref{prop:pqPQ} implies that 
\[
\frac{a}{b} \omega_d  = \frac{P_{d,m}\big(\tfrac{a}{b} \omega_d\big)}{Q_{d,m}\big(\tfrac{a}{b} \omega_d\big)} \quad \hbox{and}\quad 
\frac{a}{b}\sqrt{\frac\ell2} = \frac{p_{\ell,m}\Big(\tfrac{a}{b} \sqrt{\tfrac\ell2}\Big) }{q_{\ell,m}\Big(\tfrac{a}{b} \sqrt{\tfrac\ell2}\Big) }. 
\]
%$F_{4}^{m-1}\Big(\tfrac{a}{b}\sqrt{\tfrac\ell2}\Big) \ne 0$ and $F_{4}^{m-1}\Big(\tfrac{a}{b}\sqrt{\tfrac\ell2}\Big) = 0$. Furthermore, $b_{\ell, k}$ can be obtained from $B_{d, k}$  by replacing $\omega_{d}$, $\overline{\omega}_{d}$ with $\sqrt{2}$ or $\sqrt{3}$. 
By  Proposition~\ref{prop:pqPQ} we have 
\[
\big|P_{d,m}\big(\tfrac{a}{b} \omega_d\big)\big|=\Big|p_{\ell,m}\Big(\tfrac{a}{b} \sqrt{\tfrac\ell2}\Big)\Big| \quad\text{and}\quad \big|Q_{d,m}\big(\tfrac{a}{b} \omega_d\big)\big|=\Big|q_{\ell,m}\Big(\tfrac{a}{b} \sqrt{\tfrac\ell2}\Big)\Big|.
\]
Therefore, the Ford sphere $\mathcal{F}_{d}\left(\tfrac{a}{b} \omega_d\right)$ has the same radius as the Ford circle $\mathcal{F}\Big(\tfrac{a}{b}\sqrt{\tfrac\ell2}\Big)$  and $\mathcal{F}_{d}\left(\tfrac{a}{b} \omega_d\right) \cap \mathcal{F}_{d}\left(\tfrac{c}{d} \omega_d\right)\not=0$ if and only of $\mathcal{F}\Big(\tfrac{a}{b}\sqrt{\tfrac\ell2}\Big) \cap \mathcal{F}\Big(\tfrac{c}{d}\sqrt{\tfrac\ell2}\Big) \not=0$. Thus via the mapping $\mathbb I_{\ell} \to \mathbb J_\ell;  \; x \sqrt{\tfrac{\ell}{2}} \mapsto x \omega_{d}$, we get a one-to-one correspondence between the edges in the Farey graph associated with $H_\ell$ and the edges in the subgraph of $\mathcal E_{d}$ restricted to vertices on $\{y\omega_{d} : 0 \le y <1\}$.  
\end{proof}

In a similar way, we can connect some other types of continued fraction 
expansions associated with Hecke groups $H_{4}$ and $H_{6}$, {\em e.g.}, the ones discussed in 
\cite{D-K-S,K-S-S,R}, to continued fractions associated with $\mathbb Q(\sqrt{-d})$ for $d\in\{1, 2, 3, 7, 11\}$.

The following consequence of Theorem~\ref{th:Hecke} will be needed in the next section.

\begin{proposition}%[{\em cf}.~\cite{S-W}]
\label{prop:wallTessellation}
Let $d\in\{1,2,3,7,11\}$. For each wall $X$ and each $A\in \mathrm{PSL}(2,\mathbb{Z}[\omega_d])$ the set $A\cdot X$ is tessellated by Farey polygons. 
\end{proposition}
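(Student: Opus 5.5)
Since $A\in\mathrm{PSL}(2,\mathbb{Z}[\omega_d])$ maps Farey polygons to Farey polygons (by definition they are images of $\triangle$, $\square$, $\hexagon$, or of the geodesic $(0,\infty)$ under the group), it suffices to treat $A=\mathrm{id}$. In other words, we show that each wall $X$ itself is tessellated by Farey polygons, and then transport the tessellation by $A$. A wall $X$ is a vertical half-plane over a line in $\mathbb{C}$. It is isometric to $\mathbb{H}^2$, with the vertices of $\mathcal{E}_d$ lying in $\partial X$ forming a dense subset of the boundary line. By the translation symmetry $z\mapsto z+\alpha$, $\alpha\in\mathbb{Z}[\omega_d]$ (and, for $d\equiv 3 \bmod 4$, the rotation about the centre of $W_d$ induced by a suitable element of the group, such as $z\mapsto 1-z$ or $z\mapsto \omega_d-z$ up to units, which permutes edges of $W_d$), we reduce to the two basic walls $(\mathbb{R}+0i)\times\mathbb{R}_+$ and $\omega_d\mathbb{R}\times\mathbb{R}_+$ discussed before Theorem~\ref{th:Hecke}.

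For walls over lines of "classical type" (the real axis for all $d$, and all walls for $d\in\{1,3\}$, together with the wall over $\mathbb{R}+\omega_2$ for $d=2$), the restriction of $\mathcal{E}_d$ is the classical Farey graph. The Farey triangles $A\cdot\triangle$ with $A\in\mathrm{PSL}(2,\mathbb{Z})$, respectively conjugates of these by the isometry identifying the wall with $\mathbb{H}^2$, are then Farey triangles in our sense. They tile $X$, exactly as the tessellation $\mathcal{T}$ of $\mathbb{H}^2$ recalled in the introduction. For $d=1,3$ the other walls are images of the real wall under elements of the Bianchi group, e.g. $z\mapsto iz$ via the unit $i$, or suitable maps for $\omega_3$, so the same holds there.

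For the remaining walls ($d\in\{2,7,11\}$, the wall over $\omega_d\mathbb{R}$), Theorem~\ref{th:Hecke} identifies the subgraph of $\mathcal{E}_d$ on the wall with the Farey graph of $H_\ell$, $\ell=\ell(d)$. The Hecke group $H_\ell$ has a fundamental domain bounded by an ideal $\ell$-gon: the $H_4$ images of the ideal quadrangle with vertices $0,\tfrac{1}{\sqrt2},\sqrt2,\infty$, and for $H_6$ an ideal hexagon. These images tile $\mathbb{H}^2$, with edges exactly the edges $A(\infty)\to A(0)$ of the Hecke Farey graph. This is standard (e.g. Short–Walker) and can be checked via Ford circles as in Figures~\ref{fig:Farey7wall} and~\ref{fig:Farey7wall2}. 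Pulling this back through the correspondence $x\sqrt{\ell/2}\mapsto x\omega_d$ of Theorem~\ref{th:Hecke} gives a tiling of $X$ by ideal polygons whose sides are edges of $\mathcal{E}_d$. It remains to see that these polygons are Farey polygons. The basic one has cusps $0,\infty,\omega_d$ and $\tfrac{1}{\overline\omega_d}$ for $d\in\{2,7\}$, which is precisely $\square$ (noting $\tfrac{1}{\overline\omega_d}=\tfrac{\omega_d}{|\omega_d|^2}$ lies on $\omega_d\mathbb{R}$). For $d=11$ they are the six cusps of $\hexagon$, all on $\omega_{11}\mathbb{R}$ since $\tfrac{2}{\overline\omega}$, $\tfrac{\omega}{2}$, $\tfrac{1}{\overline\omega}$ are real multiples of $\omega$ ($|\omega_{11}|^2=3$). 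Every other tile is obtained from the basic one by an element of the Bianchi group stabilizing the wall. Such elements are $z\mapsto z+\omega_d$ and $z\mapsto -1/z$ combined with the reflection $\omega_d\leftrightarrow\overline\omega_d$, i.e. the matrices $\begin{pmatrix}0&1\\-1&m\overline\omega_d\end{pmatrix}$-type products mirroring the generators of $H_\ell$ via Proposition~\ref{prop:pqPQ}. Hence each tile is a Farey polygon.

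The main obstacle is this last step: realizing the $H_\ell$-action on the wall by honest elements of $\mathrm{PSL}(2,\mathbb{Z}[\omega_d])$. The Hecke generator $z\mapsto -1/z$ maps $\omega_d\mathbb{R}$ to $\overline\omega_d\mathbb{R}$ rather than to itself, so one must compose it with a map back to the wall. I would handle this by working with products of two generators (even words), using Proposition~\ref{prop:pqPQ}, where odd and even convergents alternate between $\omega_d$- and $\overline\omega_d$-multiples, to write the matrices explicitly with entries in $\mathbb{Z}[\omega_d]$ and unit determinant. Alternatively, I would avoid group elements altogether: tiles are determined by consecutive convergent triples or quadruples, and one checks directly via Proposition~\ref{prop:EdCrit} that the matrix sending $(0,\infty,\omega_d,1/\overline\omega_d)$ to them lies in the Bianchi group.
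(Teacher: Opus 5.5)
Your overall route is the paper's. Walls of classical type carry the classical Farey triangulation, and for $d\in\{2,7,11\}$ the remaining walls are identified through Theorem~\ref{th:Hecke} with the Farey graph of $H_4$ or $H_6$, whose tessellation by ideal quadrangles or hexagons is quoted from Short--Walker. The paper stops there. You go further and try to show that each tile is an honest $\mathrm{PSL}(2,\mathbb{Z}[\omega_d])$-image of $\triangle$, $\square$ or $\hexagon$. That is a worthwhile addition, but several of the group elements you use for it are not in the group.

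(1) For $d\in\{3,7,11\}$ no unit of $\mathbb{Z}[\omega_d]$ squares to $-1$. So $z\mapsto 1-z$ and $z\mapsto\omega_d-z$, which have determinant $-1$, are not in $\mathrm{PSL}(2,\mathbb{Z}[\omega_d])$. For $d\in\{7,11\}$ the triangle $W_d$ is isosceles, so no rotation permutes its edges. Moreover, neither of your maps carries a basic wall onto the wall over the third edge, which lies over the line $1+\overline{\omega}_d\mathbb{R}$ through $1$ and $\omega_d$.
\begin{itemize}
\item For $d\in\{7,11\}$, use $z\mapsto 1-1/z$, i.e.\ $\begin{pmatrix}1&-1\\1&0\end{pmatrix}\in\mathrm{SL}(2,\mathbb{Z})$, which maps $\omega_d\mathbb{R}\cup\{\infty\}$ onto $(1+\overline{\omega}_d\mathbb{R})\cup\{\infty\}$.
\item For $d=3$, the maps $z\mapsto\omega_3^4z$ and $z\mapsto\omega_3^2z+1$ do what you want; their determinants $\omega_3^4$, $\omega_3^2$ can be corrected by a unit.
\end{itemize}

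(2) For $d=1$, the map $z\mapsto iz$ has determinant $i$, and $i$ is not the square of a unit of $\mathbb{Z}[i]$, so the map is not in $\mathrm{PSL}(2,\mathbb{Z}[i])$. This cannot be patched. Suppose $A\in\mathrm{SL}(2,\mathbb{Z}[i])$ maps $\triangle$ onto a triangle with cusps $0,\infty,x$. Checking the six possible assignments gives $x=\pm u^2$ with $u$ a unit, i.e.\ $x=\pm1$.
\begin{itemize}
\item Hence the tiles $\FT(0,\infty,\pm i)$ of the wall over $i\mathbb{R}$ are not in the orbit of $\triangle$.
\item Consequently no element of $\mathrm{PSL}(2,\mathbb{Z}[i])$ maps the real wall onto that wall.
\end{itemize}
For these walls the statement holds only with the looser notion of Farey polygon, namely images under matrices whose determinant is a unit. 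The paper tacitly uses this notion, for instance when it counts the triangle with cusps $0,\infty,\omega_1$ among the faces of $\mathcal{C}_1$. You should state this convention explicitly rather than claim that $z\mapsto iz$ lies in the Bianchi group.

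(3) On the Hecke walls, the swap $\omega_d\leftrightarrow\overline{\omega}_d$ is complex conjugation, not a M\"obius map. Your fallback with even words does work, though. The product $\begin{pmatrix}0&1\\-1&\overline{\omega}_d\end{pmatrix}\begin{pmatrix}0&1\\-1&\omega_d\end{pmatrix}$ preserves $\omega_d\mathbb{R}$.
\begin{itemize}
\item For $d=7$ it is the involution of $\square$ swapping $0\leftrightarrow\omega_7$ and $\infty\leftrightarrow\omega_7/2$.
\item For $d=11$ it is an order-three rotation of $\hexagon$.
\end{itemize}
Together with $z\mapsto z+\omega_d$ it acts transitively on the tiles of the wall. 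The tiles across the two edges at $\infty$ are the translates by $\pm\omega_d$, and the involution (resp.\ rotation) carries every other edge of the basic tile to one of these two edges.
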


\begin{proof}
As mentioned at the beginning of this section, for $d\in\{1,3\}$ the Farey tessellation of a wall $X$ is the classical one. For $d\in\{2,7,11\}$ the Farey tessellation of a wall $X$ is either the classical one or, in view of Theorem~\ref{th:Hecke}, the tessellation induced by a Farey graph associated with one of the Hecke groups $H_4$ or $H_6$. The classical Farey graph induces a Farey  tessellation of $X$ by Farey triangles (see Figure~\ref{fig:ClassicalFarey}). Moreover, according to \cite{S-W}, $H_4$ induces a Farey tessellation by Farey quadrangles (see Figure~\ref{fig:Farey7wall}), and $H_6$ induces a Farey tessellation by Farey hexagons. This proves the proposition. 
\end{proof}

\section{The Farey tessellation of $\mathbb H^{3}$}\label{sec:polyhedron}

For $d\in\{1,2,3,7,11\}$ we will now use the standard cell $\mathcal{C}_d$ from Definition~\ref{def:standard} to provide analogs of the classical Farey tessellation of~$\mathbb{H}^2$ in $\mathbb{H}^3$.

\begin{lemma}\label{lem:boundFarey}
For each $d\in\{1,2,3,7,11\}$, the boundary of $\mathcal{C}_d$ is a union of Farey $n$-gons ($n\ge 3$).
\end{lemma}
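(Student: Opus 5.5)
The plan is to split $\partial\mathcal{C}_d$ into its vertical part and its lower part and treat each separately. By Definition~\ref{def:standard}, $\mathcal{C}_d$ is the closure of the points of $W_d\times\mathbb{R}_+$ lying above every non-degenerate hemisphere $A\cdot X$, where $A\in\mathrm{PSL}(2,\mathbb{Z}[\omega_d])$ and $X$ is a wall. Accordingly, $\partial\mathcal{C}_d$ consists of two pieces. The first is the vertical piece $\partial(W_d)\times\mathbb{R}_+$ lying above the lower envelope. The second is the lower envelope itself, made of pieces of hemispheres $A\cdot X$.

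\textbf{Vertical piece.} Each vertical side lies in a wall $X$. By Proposition~\ref{prop:wallTessellation}, $X=\mathrm{id}\cdot X$ is tessellated by Farey polygons. In the classical case these are Farey triangles. By Theorem~\ref{th:Hecke} and \cite{S-W}, they are Farey quadrangles for $H_4$ and Farey hexagons for $H_6$. The part of $X$ above the envelope should be exactly the unbounded Farey polygon(s) of this tessellation with cusp $\infty$ over the edge $Y$. For instance, the side over $[0,1]$ is $\FT(0,\infty,1)=\triangle$, and the side over $[0,\omega_d]$ for $d\in\{2,7\}$ is the quadrangle $\square$. To verify this, I check that the geodesic edges of these polygons that do not pass through $\infty$ are precisely the intersections of $X$ with the lowest relevant hemispheres. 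This follows from the Ford-sphere description: an edge is a geodesic through tangency points of Ford spheres.

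\textbf{Lower envelope.} Each hemisphere $A\cdot X$ is itself tessellated by Farey polygons, again by Proposition~\ref{prop:wallTessellation}. I show that the envelope restricted to $W_d\times\mathbb{R}_+$ is a union of whole tiles of these tessellations. Suppose a tile $\Pi\subset A\cdot X$ meets the envelope. By Proposition~\ref{lem:GeodesicIntersectGeneral}, any other Farey polygon $\Pi'$ lying on a different hemisphere $A'\cdot X'$ meets $\Pi$ only in a common edge or cusp. Hence $\Pi$ cannot be partially cut by another hemisphere along a curve through its interior. Such a cut would be a point of $\Pi$ that also lies on a Farey polygon of the tessellation of $A'\cdot X'$, and that point is neither on an edge nor a cusp of $\Pi$. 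So once one interior point of $\Pi$ is on the envelope, all of $\Pi$ is. Similarly, by Theorem~\ref{GeodesicIntersectW} no Farey polygon crosses the vertical walls. Therefore the tiles of the envelope that lie over $W_d$ fit exactly inside $W_d\times\mathbb{R}_+$ and are not sliced by the vertical sides.

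\textbf{Finiteness and closing the argument.} I also need only finitely many hemispheres to contribute. Those meeting $W_d\times\mathbb{R}_+$ high enough come from $A$ with $|q|$ bounded, where $q$ is the lower-left entry of $A$, because a hemisphere $A\cdot X$ with $A(\infty)=p/q$ has Euclidean radius $O(1/|q|^2)$. Combining these steps, $\partial\mathcal{C}_d$ is a finite union of Farey triangles, quadrangles or hexagons.

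\textbf{Main obstacle.} The hardest step is the lower envelope: showing that the envelope is really covered by tiles, with no gaps. One could worry that a point of the envelope lies on a hemisphere yet outside any tile that stays on top. I plan to rule this out using the tessellation property of each $A\cdot X$ together with the disjointness in Proposition~\ref{lem:GeodesicIntersectGeneral}. If needed, I would supplement this by explicitly listing the top hemispheres for each $d$, e.g.\ the unit hemispheres centred at the points of $\mathbb{Z}[\omega_d]$ and at the special points in $G_d$.
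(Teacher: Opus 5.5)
Your proposal is correct and is essentially the paper's argument. The paper also combines Proposition~\ref{prop:wallTessellation} with Proposition~\ref{lem:GeodesicIntersectGeneral} to see that two of the surfaces $A\cdot X$ (vertical walls included) can meet only along edges of their Farey tessellations, so each piece $\mathcal{C}_d\cap B\cdot X$ of $\partial\mathcal{C}_d$ consists of whole tiles; the ``no gaps'' issue you flag is what the paper settles by remarking that leftover Farey digons are covered by $n$-gons with $n\ge 3$, which is routine once one uses local finiteness of the family $\{A\cdot X\}$.

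Your finiteness paragraph is not needed for this lemma, and its stated reason is false. For $X$ the wall over $\mathbb{R}$ and $A=\begin{pmatrix}\alpha&\beta\\ \gamma&\delta\end{pmatrix}$, the radius of $A\cdot X$ is $1/(2|\Im(\gamma\overline{\delta})|)$, not $O(1/|\gamma|^2)$; for $d=1$, $\gamma=n$, $\delta=n+i$ it equals $1/(2n)$. So you would need to pick the representative $A$ minimizing $|\gamma|$, or bound $|\Im(\gamma\overline{\delta})|$ directly.
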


\begin{proof}
Let $B\in \mathrm{PSL}(2,\mathbb{Z}[\omega_d])$ be arbitrary and note that $\mathcal{C}_d \cap B\cdot X$ is a subset of  $B\cdot X$ whose boundary $\partial(\mathcal{C}_d \cap B\cdot X)$ (relative to $\mathcal{C}_d$) is contained in a union of topologically $1$-dimensional sets of the form  $A\cdot X \cap B\cdot X$, where $A\in \mathrm{PSL}(2,\mathbb{Z}[\omega_d])$. Because, by Proposition~\ref{prop:wallTessellation},
$A\cdot X$ and $B\cdot X$ are tessellated by Farey polygons, Proposition~\ref {lem:GeodesicIntersectGeneral} implies that $A\cdot X \cap B\cdot X$ is a union of boundaries of Farey polygons (and cusps). Thus $\mathcal{C}_d \cap B\cdot X$ is a union of Farey polygons. Since 
\[
\partial \mathcal{C}_d = \bigcup_{B\in \mathrm{PSL}(2,\mathbb{Z}[\omega_d]),\, X \text{ wall}} (\mathcal{C}_d \cap B\cdot X), 
\]
the lemma follows (note that Farey digons can be neglected because they are covered by Farey $n$-gons with $n\ge 3$).
\end{proof}

\begin{corollary}\label{cor:finitecusp}
Suppose that the set $S$ of cusps of the standard cell $\mathcal{C}_d$  is finite. Then $\mathcal{C}_d$ is bounded by finitely many Farey polygons each of whose cusps are contained in $S$.
\end{corollary}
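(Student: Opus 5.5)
By Lemma~\ref{lem:boundFarey}, $\partial\mathcal{C}_d$ is a union of Farey $n$-gons with $n\ge 3$. We therefore only need two things: every cusp of each such polygon lies in $S$, and only finitely many polygons occur. Both should follow from the rigidity of Farey polygons once their cusps are known.

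First, take a Farey polygon $\Pi\subset\partial\mathcal{C}_d$ that belongs to this union and has a nontrivial two-dimensional part on the boundary. We show its cusps are cusps of $\mathcal{C}_d$. A cusp $v$ of $\Pi$ is a point of $\partial\mathbb{H}^3$ at which two boundary geodesics of $\Pi$ meet. Since $\Pi\subset\mathcal{C}_d$ and $\mathcal{C}_d$ is closed in $\mathbb{H}^3\cup\partial\mathbb{H}^3$, the point $v$ lies in the closure of $\mathcal{C}_d$ on the boundary. Moreover $v\in\mathbb{Q}(\sqrt{-d})\cup\{\infty\}$, so $v$ is a cusp of $\mathcal{C}_d$ in the sense used for $S$, namely a point of $\mathbb{Q}(\sqrt{-d})\cup\{\infty\}$ in the closure of $\mathcal{C}_d$. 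Hence every cusp of $\Pi$ lies in $S$.

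Second, for finiteness we use the rigidity of the polygons. By Proposition~\ref{lem:GeodesicIntersectGeneral}, two distinct Farey polygons meet at most in a common edge, and a Farey polygon is determined by any two of its cusps joined by an edge. The reason is that the pair $(\frac pq,\frac rs)$ with $|ps-qr|=1$ fixes $A$ up to the finite stabiliser of the edge, and $A\cdot\triangle$, $A\cdot\square$ or $A\cdot\hexagon$ then lies in the half-sphere through that geodesic, with only finitely many choices. Since $S$ is finite, there are finitely many pairs of cusps in $S$, hence finitely many candidate edges, hence finitely many Farey polygons with all cusps in $S$. So $\partial\mathcal{C}_d$ is a finite union of such polygons.

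The main obstacle is the first step: we must make sure that a polygon from Lemma~\ref{lem:boundFarey} really lies entirely in $\partial\mathcal{C}_d$, not just partly. Otherwise a cusp of $\Pi$ might fail to be in the closure of $\mathcal{C}_d$. To handle this, I would argue via the proof of Lemma~\ref{lem:boundFarey}: the relative boundary of $\mathcal{C}_d\cap B\cdot X$ in $B\cdot X$ consists of edges of the Farey tessellation of $B\cdot X$ given by Proposition~\ref{prop:wallTessellation}. Consequently $\mathcal{C}_d\cap B\cdot X$ is a union of whole tiles, and tiles that are whole polygons have all their cusps in the closure of $\mathcal{C}_d$.
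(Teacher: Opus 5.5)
Your proposal is correct and follows the paper's proof: Lemma~\ref{lem:boundFarey} supplies the bounding polygons, their cusps lie in $S$, and finiteness of $S$ leaves only finitely many candidate polygons. Where the paper simply observes that a polygon is fixed by its cusps, you count via finite edge stabilisers, which also works. Two points to tidy: the worry in your last paragraph is unnecessary, because the lemma already writes $\partial\mathcal{C}_d$ as a union of whole polygons; and ``determined by any two of its cusps joined by an edge'' is false as stated, since an edge lies in several Farey polygons, although the ``finitely many choices'' you actually use is true.
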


\begin{proof}
By Lemma~\ref{lem:boundFarey} the cell $\mathcal{C}_d$ is bounded by Farey polygons. Each Farey polygon bounding $\mathcal{C}_d$ has to have edges passing from one cusp of $\mathcal{C}_d$ to another. Since there are only finitely many such cusps, there are only finitely many possible Farey polygons connecting these cusps.
\end{proof}

\begin{figure}[h]
\begin{tikzpicture}[scale=0.3]
\node at (12.5,7.5) {$\frac{\omega_1}{1}$};
\node at (27.5,7.5) {$\frac{\omega_1+1}{1}$};
\node at (12.5,-7.5) {$\frac{0}{1}$};
\node at (27.5,-7.5) {$\frac{1}{1}$};
\node at (20,1.7) {$\frac{1}{1-\omega_1}$};

\node[draw=none,fill=none] at (0,-1){\includegraphics[width=.43\textwidth]{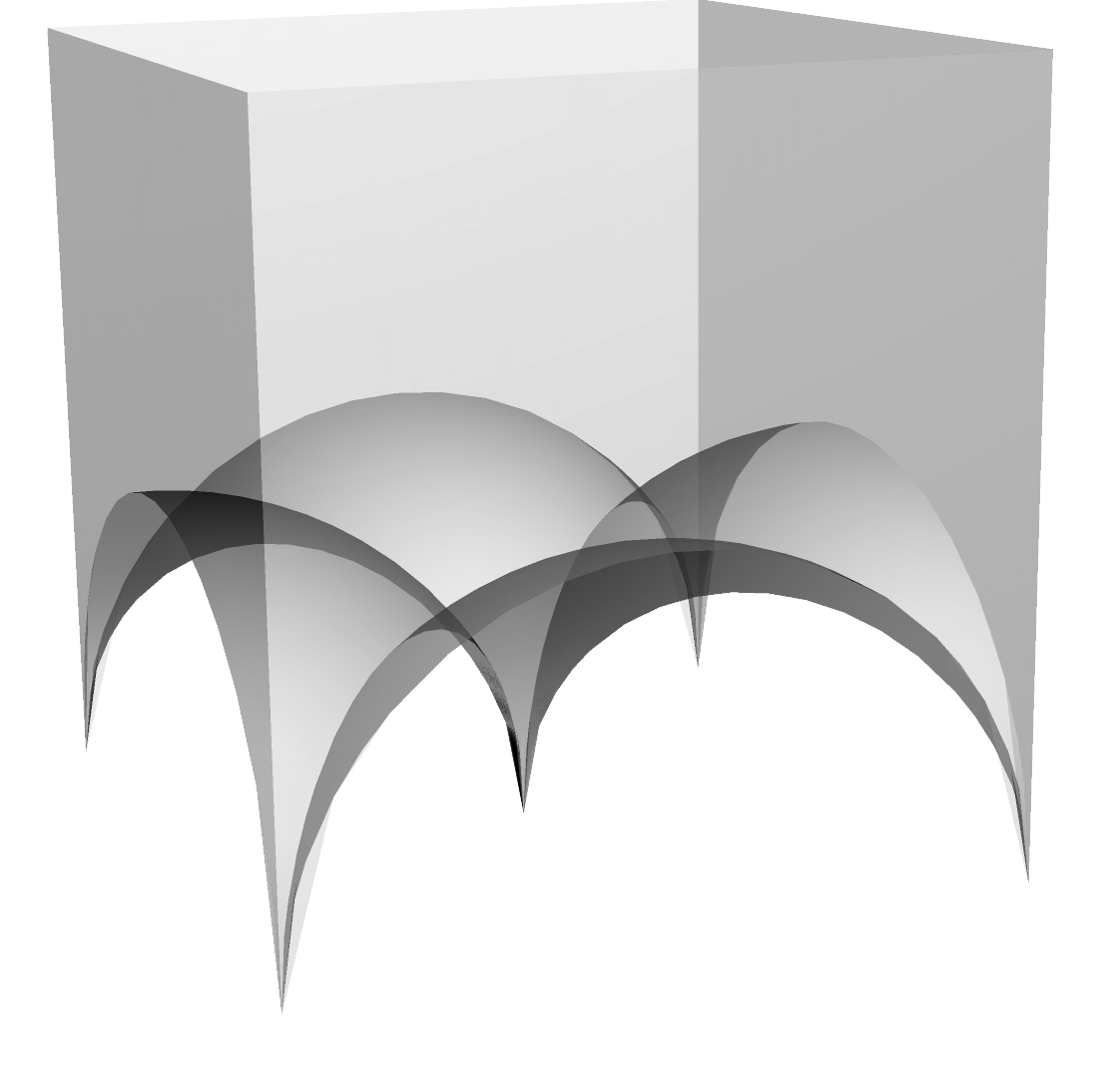}};
\node[draw=none,fill=none] at (20,0){\includegraphics[width=.35\textwidth]{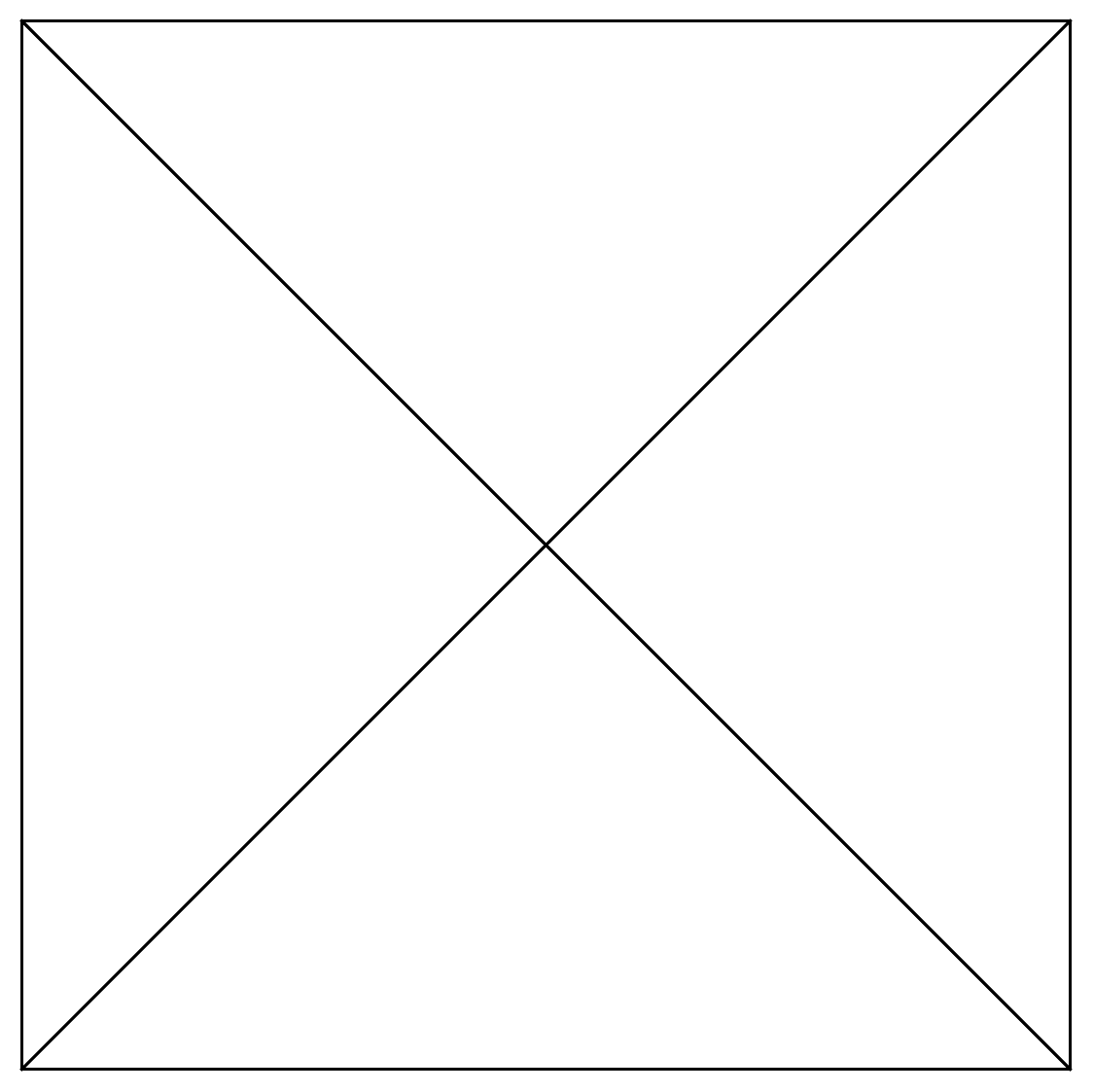}};
\end{tikzpicture}
\caption{A view of the lower part of the octagonal standard cell $\mathcal{C}_1$ (left), and $\mathcal{C}_1$ seen from the cusp at~$\infty$ (right; compare \cite[Figure~2]{SchmidtAsmus:69}). In the right figure the Farey triangles containing $\infty$ correspond to the boundary lines of the square. \label{fund1}}
\end{figure}

We call two rational cusps $\frac pq,\frac rs$ \emph{neighboring cusps} if $|ps-qr|=1$.

\begin{proposition}
The standard cells $\mathcal{C}_d$ with $d\in\{1,2,3,7,11\}$ are characterized as follows.
\begin{itemize}
\item[(i)]\, The standard cell $\mathcal{C}_1$ is an octahedron whose boundary consists of all Farey triangles whose edges connect neighboring cusps taken from the set $\big\{\frac01,\frac10,\frac11,\frac {\omega_1}1,\frac{\omega_1+1}1,\frac{1}{1-\omega_1}\big\}$. These $8$ Farey triangles are depicted in Figure~\ref{fund1}.

\item[(ii)]\, The standard cell $\mathcal{C}_2$ is a tetradecahedron whose boundary consists of all Farey triangles and Farey quadrangles whose edges connect neighboring cusps taken from 
$
\big\{\frac01,\frac10,\frac11,\frac{\omega_2}1,\frac{\omega_2+1}1,\frac{1}{-\omega_2},\frac{1-\omega_2}{-\omega_2},\frac{\omega_2+1}{2},\frac{\omega_2-1}{\omega_2+1},\frac{2}{1-\omega_2},\frac{1}{1-\omega_2},\frac{\omega_2}{\omega_2+1}\big\}.
$
These $14$ Farey polygons are depicted in Figure~\ref{fund2}.

\item[(iii)]~The standard cell $\mathcal{C}_3$ is the tetrahedron whose boundary consists of all Farey triangles whose edges connect neighboring cusps taken from $\big\{\frac01,\frac10,\frac11,\frac{\omega_3}{1}\big\}$. These $4$ Farey triangles are depicted in Figure~\ref{fund3}.

\item[(iv)]~The standard cell $\mathcal{C}_7$ is the pentahedron whose boundary consists of all Farey triangles and Farey quadrangles whose edges connect neighboring cusps taken from $\big\{\frac01,\frac10,\frac11,\frac{\omega_7}{1},\frac{1}{1-\omega_7},\frac{\omega_7-1}{\omega_7}\big\}$. 
These $5$ Farey polygons are depicted in Figure~\ref{fund3}.

\item[(v)]\,The standard cell $\mathcal{C}_{11}$ is the octahedron whose boundary consists of all Farey triangles and Farey hexagons whose edges connect neighboring cusps taken from
{
$
\big\{\frac01,\frac10,\frac11,\frac{\omega_{11}}{1},\frac{1}{1-\omega_{11}},\frac{\omega_{11}-1}{\omega_{11}},\frac{\omega_{11}}{2},\frac{\omega_{11}+1}{2},\frac{\omega_{11}-2}{\omega_{11}}, \frac{2}{1-\omega_{11}},\frac{\omega_{11}-1}{\omega_{11}+1},\frac{2}{2-\omega_{11}}\big\}. 
$
}
These $8$ Farey polygons are depicted in Figure~\ref{fund3}.
\end{itemize}
\end{proposition}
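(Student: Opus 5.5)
The plan is to find, for each $d$, the finite set $S_d$ of cusps of $\mathcal{C}_d$ and then apply Corollary~\ref{cor:finitecusp}. That corollary reduces the problem to listing the Farey polygons spanned by neighboring cusps in $S_d$ and checking that they close up to a polyhedron. We use the Ford sphere picture throughout. A point of $W_d\times\mathbb{R}_+$ lies above every non-degenerate sphere $A\cdot X$ exactly when it is not covered from below by the hemispheres carrying Farey polygons with a finite cusp. The lower boundary of $\mathcal{C}_d$ is therefore a ``floor'' made of finitely many hemispherical pieces, each tessellated by Farey polygons by Proposition~\ref{prop:wallTessellation}.

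\emph{Step 1 (the top of the cell).} The four (resp.\ three) vertical walls over the edges of $W_d$ are degenerate spheres. By the description of the wall tessellations in Section~\ref{sec:hecke} (classical Farey triangles, or the $H_4$/$H_6$ quadrangles and hexagons of Theorem~\ref{th:Hecke}), the polygons in them that have $\infty$ as a cusp are $\FT(\infty,\alpha,\alpha+1)$-type triangles, or quadrangles/hexagons with $\infty$ and two consecutive lattice points. These give the faces of $\mathcal{C}_d$ through $\infty$, and their finite cusps are the corners of $W_d$ together with the cusps listed in the wall polygons. For $d=11$, for example, the hexagon in the wall through $0,\omega_{11}$ contributes $\frac{2}{\overline\omega_{11}}$ and $\frac{\omega_{11}}2$.

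\emph{Step 2 (the floor).} We identify the finitely many maximal hemispheres over $W_d$: those carrying Farey polygons whose cusps are the corners of $W_d$ and the midpoint-type cusps of smallest denominator. For instance, for $d=1$ we take the hemisphere through $0,1,\omega_1,\frac1{1-\omega_1}$ and its images under $z\mapsto 1+\omega_1-z$. These can be written down as explicit images $A\cdot X$ with $A$ of small entries. We then show that they cover $W_d$: every point of $W_d$ lies under one of them, so no further cusps occur. To do this we compare with the Ford spheres. Any cusp $\frac pq$ with $|q|$ larger than the denominators in $S_d$ lies strictly under the listed hemispheres. The reason is that the covering radius of $\mathbb{Z}[\omega_d]$ (the Euclidean property) bounds $|z-\frac pq|$ for the relevant cusps, and a Farey polygon through such a cusp has all its edges of Ford type, so by Corollary~\ref{cor:geocor} and Proposition~\ref{lem:GeodesicIntersectGeneral} it cannot poke above a listed hemisphere without crossing one of its edges.

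\emph{Step 3 (assembly).} With $S_d$ determined, we enumerate the pairs $\frac pq,\frac rs\in S_d$ with $|ps-qr|=1$ (and, for $d\in\{2,7,11\}$, the quadrangle and hexagon patterns of Figure~\ref{fig:fareyhex}). This lists the Farey triangles, quadrangles and hexagons, and counting them gives $8,14,4,5,8$ faces. Proposition~\ref{lem:GeodesicIntersectGeneral} guarantees that distinct faces meet only in common edges or cusps. Each edge lies in exactly two faces, which confirms the closed polyhedral surface and hence the claimed combinatorial type.

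The main obstacle is Step 2 for $d=2$ and $d=11$. There the floor consists of several hemispheres of different radii meeting along Hecke-type edges, and one must check that no small region of $W_d$ escapes them. I would handle this by a symmetry reduction, using the maps $z\mapsto 1+\omega_d-z$ or $z\mapsto 1-\overline{z}$ that preserve $W_d$. After that, explicit inequalities on a fundamental piece of $W_d$ reduce the check to a finite verification, ideally supported by the pictures in Figures~\ref{fund1}--\ref{fund3}.
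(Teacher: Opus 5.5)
Your overall skeleton is the one the paper uses: bound the cusp set, apply Corollary~\ref{cor:finitecusp}, then enumerate neighboring pairs. However, Step~2 does not hold up as written.

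\textbf{The $d=1$ example is wrong.} The point $\frac{1}{1-\omega_1}=\frac{1+\omega_1}{2}$ is the \emph{center} of the circle through $0,1,\omega_1$, so your four points are not concyclic. Moreover, the hemisphere over that circle (radius $\frac{1}{\sqrt2}$) is not of the form $A\cdot X$ at all: for $d=1$ every non-degenerate $A\cdot X$ has radius $\frac{1}{2k}$ with $k\in\mathbb{N}$. The actual floor of $\mathcal{C}_1$ consists of four hemispheres of radius $\frac12$, centered at the midpoints of the sides of $W_1$. Each carries a single triangle $\FT(\alpha,\alpha',\frac{1}{1-\omega_1})$ with $\alpha,\alpha'$ adjacent corners. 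These hemispheres are permuted by the quarter-turn $z\mapsto\omega_1z+1$, not just by $z\mapsto 1+\omega_1-z$.

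\textbf{The maximality argument has a gap.} Your justification that the listed hemispheres are maximal only treats Farey polygons through cusps of large denominator, and the covering-radius remark never actually enters. A sphere $A\cdot X$ could a priori contain an edge of the candidate cell and enter its interior there, crossing no face and no edge transversally. By Proposition~\ref{lem:GeodesicIntersectGeneral}, the Farey polygon of $A\cdot X$ doing this would lie inside the cell with \emph{all} its cusps in $S_d$. That is exactly the case your argument leaves open.

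\textbf{A minor slip for $d=11$.} The hexagon in the wall over $[0,\omega_{11}]$ also contributes $\frac{1}{\overline\omega_{11}}=\frac{\omega_{11}}{3}$.

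\textbf{How the paper avoids this.} It needs neither a maximality argument nor a Ford-sphere comparison. Let $\mathcal{O}$ be the region over $W_d$ bounded by the listed polygons. Each of these is a Farey polygon, i.e.\ an image of $\triangle$, $\square$ or $\hexagon$, so it lies in a sphere $A\cdot X$, and their projections tile $W_d$. Hence $\mathcal{C}_d\subseteq\mathcal{O}$ directly from Definition~\ref{def:standard}, and the cusps of $\mathcal{C}_d$ lie in $S_d$.

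The decisive check is that every pair of neighboring cusps in $S_d$ is already an edge of $\mathcal{O}$, and that the only Farey polygons spanned by these edges are the faces of $\mathcal{O}$. This is your Step~3 enumeration. Corollary~\ref{cor:finitecusp} then gives $\partial\mathcal{C}_d\subseteq\partial\mathcal{O}$. Since $\mathcal{O}^\circ$ is connected and contains interior points of $\mathcal{C}_d$ (points high above $W_d$), it follows that $\mathcal{C}_d=\mathcal{O}$.

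So the reverse inclusion you were trying to prove in Step~2 comes for free. All that remains of Step~2 is to verify that the listed floor polygons are Farey polygons tiling $W_d$, which the paper reads off from Figures~\ref{fund1}--\ref{fund3}.
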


\begin{proof}
We consider the case $d=1$. The octagon $\mathcal{O}$ described in the statement has six cusps (one of them being $\infty=\frac10$; see Figure~\ref{fund1}). Since $\mathcal{C}_1 \subseteq \mathcal{O}$, the set of cusps of the fundamental cell $\mathcal{C}_1$ is a subset of the set formed by the six cusps of $\mathcal{O}$. However, each pair of neighboring cusps is connected by a side of a Farey triangle in $\mathcal{O}$. Thus Corollary~\ref{cor:finitecusp} implies that $\mathcal{C}_1$ cannot be bounded by any other Farey polygon.

By analogous reasoning we get the result for the other values of $d$. Just observe that the polygons depicted in Figure~\ref{fund2} and Figure~\ref{fund3} are all Farey polygons. 
\end{proof}

\begin{figure}[h]
\begin{center}
\begin{tikzpicture}[scale=0.3]
\node at (12.5,10) {$\frac{\omega_2}{1}$};
\node at (28,10) {$\frac{\omega_2+1}{1}$};
\node at (18.6,4.1) {$\frac{\omega_2-1}{\omega_2+1}$};
\node at (21.4,4.1) {$\frac{2}{1-\omega_2}$};
\node at (18,0) {$\frac{\omega_2+1}{2}$};
\node at (12,0) {$\frac{1}{-\omega_2}$};
\node at (28.5,0) {$\frac{1-\omega_2}{-\omega_2}$};
\node at (18.6,-4.1) {$\frac{1}{1-\omega_2}$};
\node at (21.4,-4.1) {$\frac{\omega_2}{\omega_2+1}$};
\node at (12.5,-10) {$\frac{0}{1}$};
\node at (27.5,-10) {$\frac{1}{1}$};
\node[draw=none,fill=none] at (20,0){\includegraphics[width=.35\textwidth]{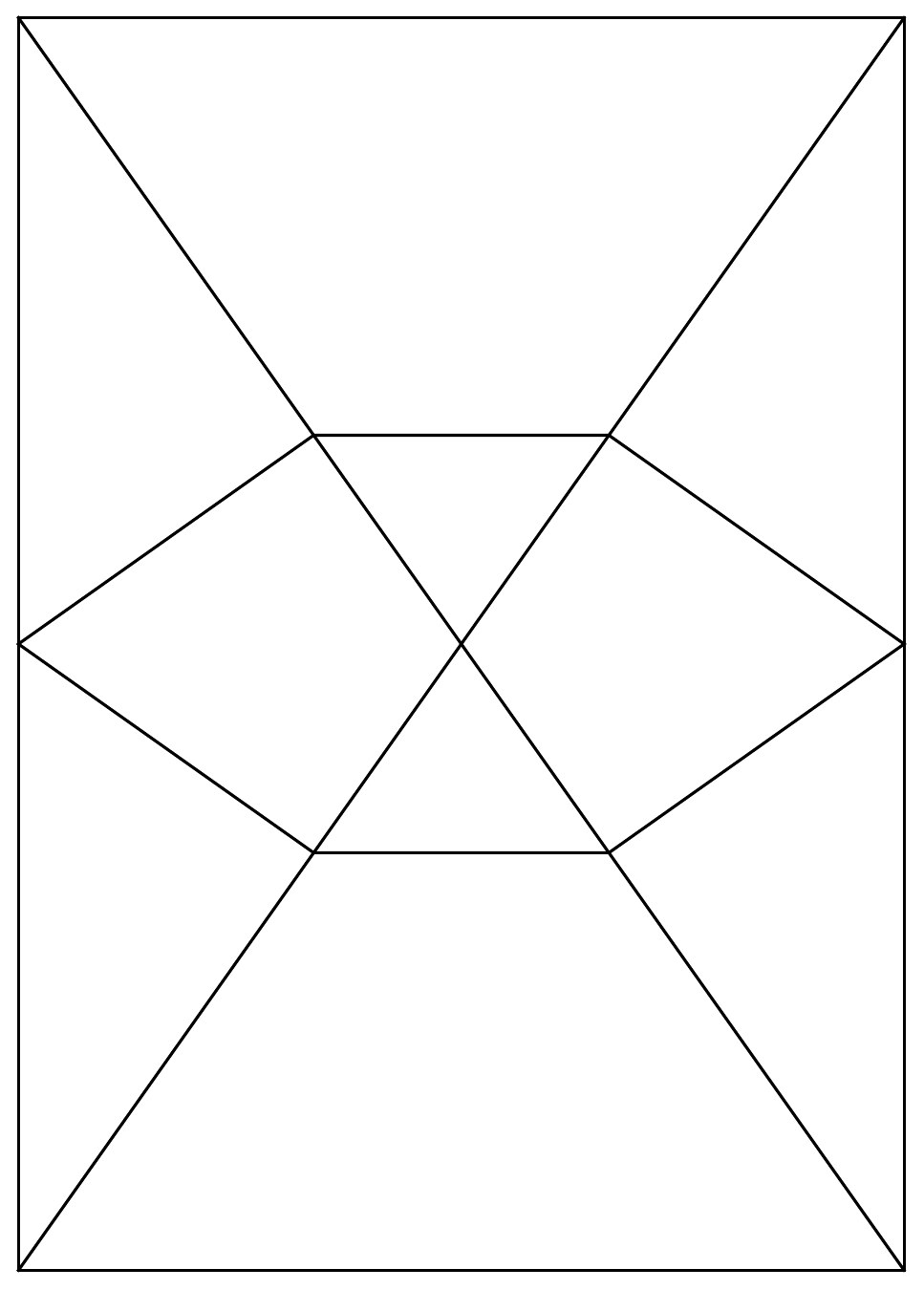}};
\end{tikzpicture}
\end{center}
\caption{The standard cell $\mathcal{C}_2$ seen from the cusp at $\infty$ (compare Figure~\ref{fig:sqrt2Ed} and \cite[Figure~2]{SchmidtAsmus:69}). The figure shows the Farey polygons in $\partial \mathcal{C}_2$ not containing $\infty$. The ones containing $\infty$ are contained in the walls (which are seen as the boundary lines of the rectangle). These are $\FT\big(\frac{0}{1},\frac{1}{0},\frac{1}{1}\big)$, $\FT\big(\frac{{\omega_{2}}}{1},\frac{1}{0},\frac{{\omega_{2}}+1}{1}\big)$, $\FQ\big(\frac{0}{1},\frac{1}{0},\frac{{\omega_{2}}}{1},\frac{1}{-{\omega_{2}}}\big)$, $\FQ\big(\frac{1}{1},\frac{1}{0},\frac{{\omega_{2}}+1}{1},\frac{1-{\omega_{2}}}{-{\omega_{2}}}\big)$. 
\label{fund2}}
\end{figure}

\begin{figure}[h]
\begin{tikzpicture}[scale=0.3]
\node at (-5.3,-10) {$\frac{0}{1}$};
\node at (5.3,-10) {$\frac{1}{1}$};
\node at (0,1.2) {$\frac{\omega_3}{1}$};

\node at (13,6.2) {$\frac{\omega_7}{1}$};
\node at (13-5.3,-10) {$\frac{0}{1}$};
\node at (13+5.3,-10) {$\frac{1}{1}$};
\node at (13+4,-1.8) {$\frac{\omega_7-1}{\omega_7}$};
\node at (13-4,-1.8) {$\frac{1}{1-\omega_7}$};

\node at (26-5.3,-10) {$\frac{0}{1}$};
\node at (26+5.3,-10) {$\frac{1}{1}$};
\node at (26,9.8) {$\frac{\omega_{11}}{1}$};
\node at (26+3.1,3) {$\frac{\omega_{11}-2}{\omega_{11}}$};
\node at (26-3.3,3) {$\frac{2}{1-\omega_{11}}$};
\node at (26+4.1,0) {$\frac{\omega_{11}+1}{2}$};
\node at (26-3.7,0) {$\frac{\omega_{11}}{2}$};
\node at (26+5,-3) {$\frac{\omega_{11}-1}{\omega_{11}}$};
\node at (26-5,-3) {$\frac{1}{1-\omega_{11}}$};

\node at (26-1.3,-3) {$\frac{\omega_{11}-1}{\omega_{11}+1}$};
\node at (26+1.3,-3) {$\frac{2}{2-\omega_{11}}$};

\node[draw=none,fill=none] at (0,-4.3){\includegraphics[width=.28\textwidth]{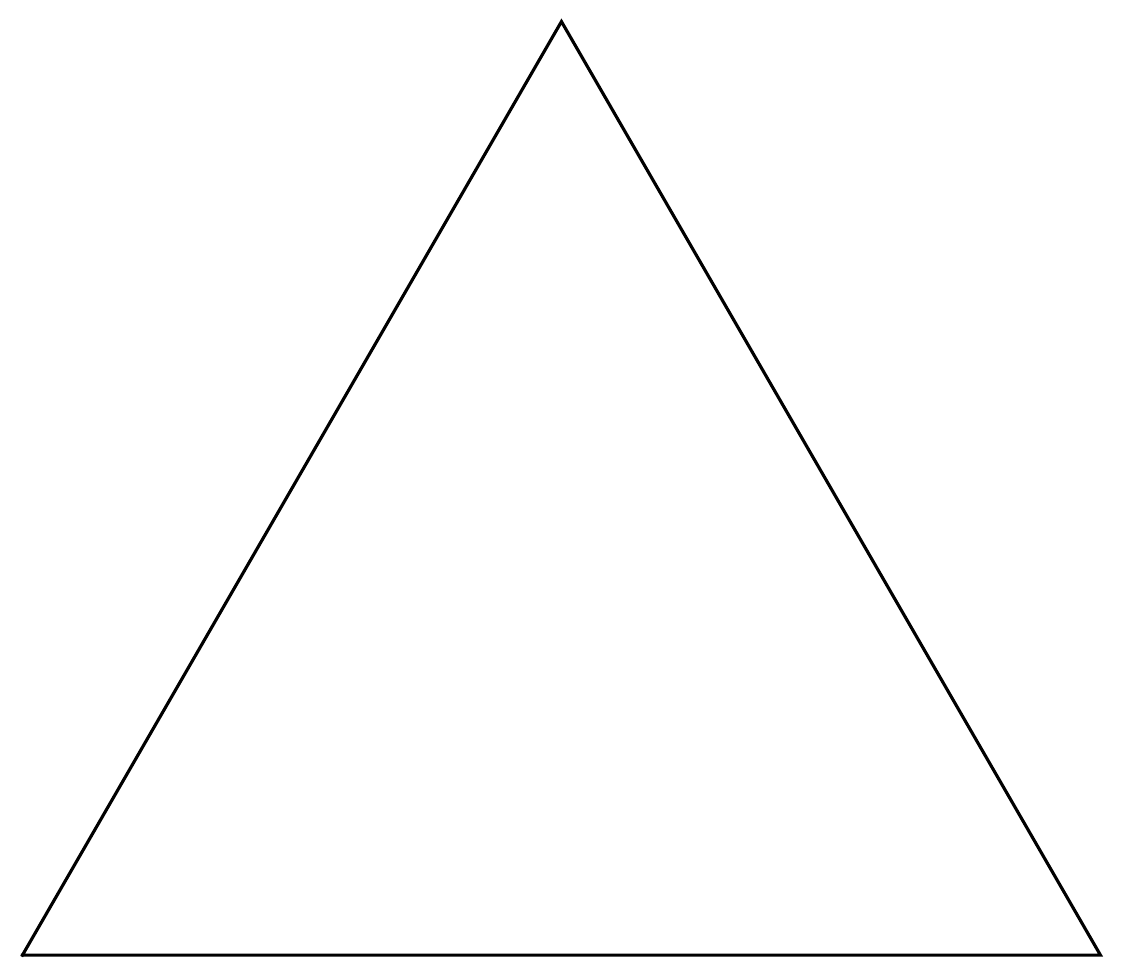}};
\node[draw=none,fill=none] at (13,-1.8){\includegraphics[width=.28\textwidth]{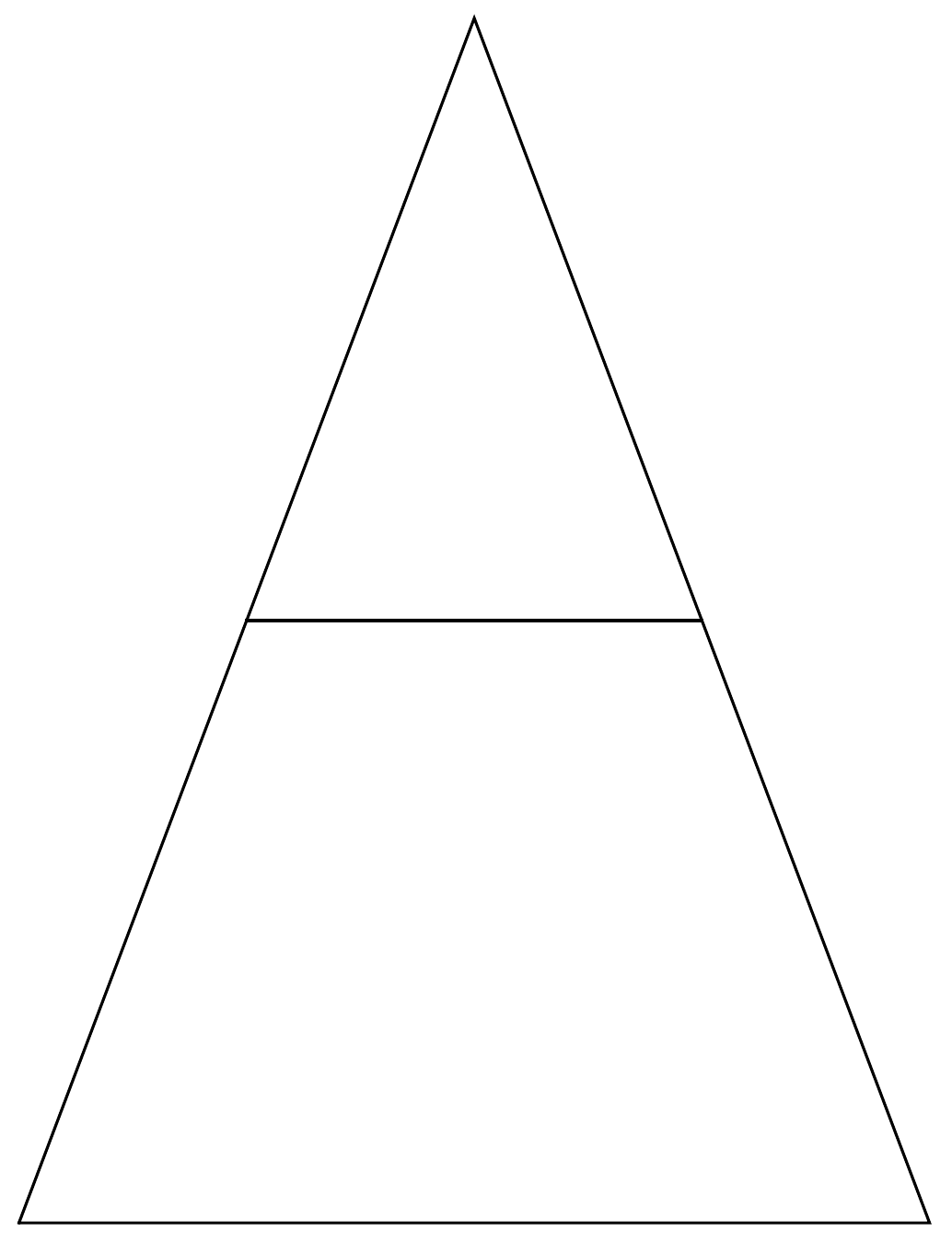}};
\node[draw=none,fill=none] at (26,0){\includegraphics[width=.28\textwidth]{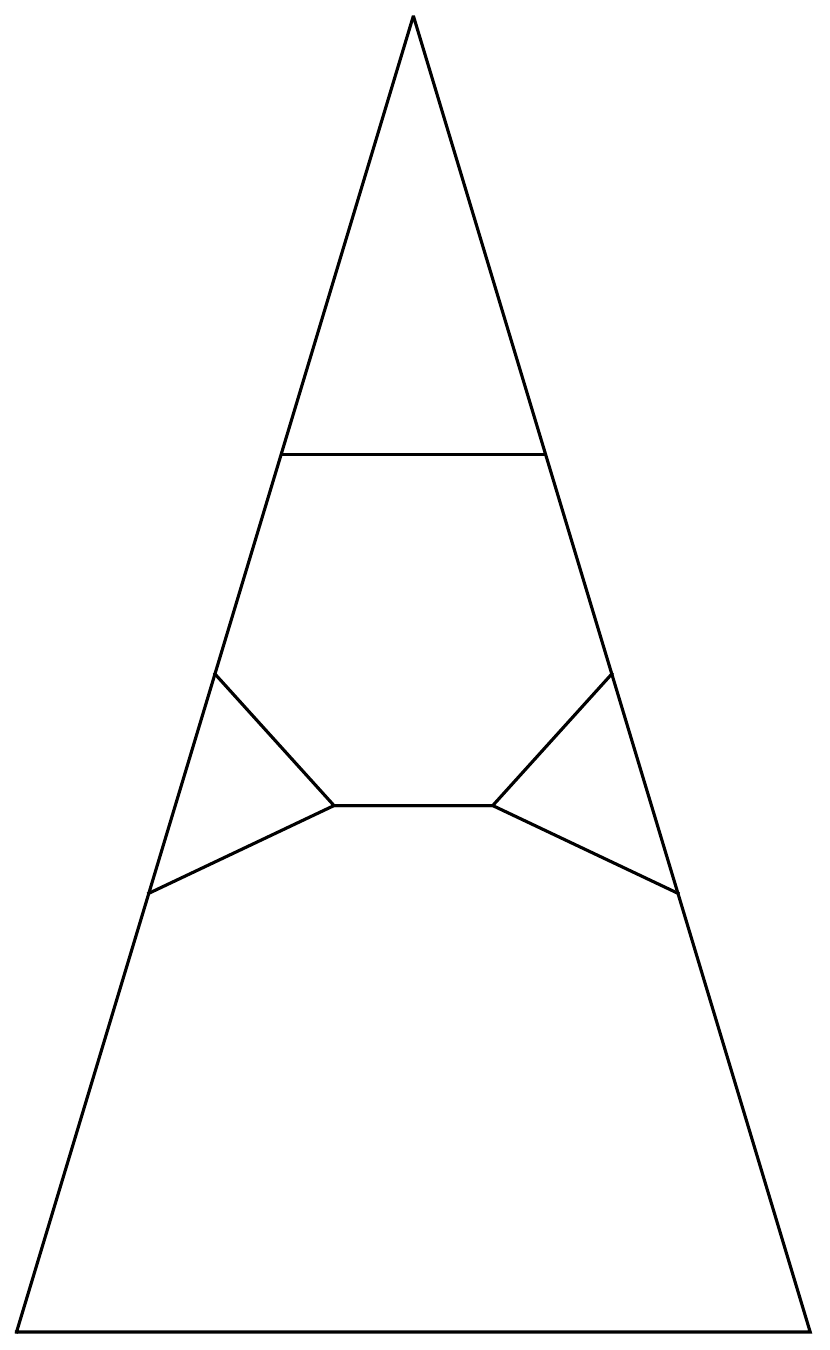}};
\end{tikzpicture}
\caption{The standard cells $\mathcal{C}_3$, $\mathcal{C}_7$, and $\mathcal{C}_{11}$ (from left to right) seen from the cusp at $\infty$. Compare \cite[Figures~1 and 4]{SchmidtAsmus:69} as well as \cite[Figure~1]{SchmidtAsmus:78}. Note that each edge of each triangle corresponds to a Farey polygon one of whose cusps is $\infty$.
\label{fund3}}
\end{figure}

Let 
$
\mathcal{C}_d'=\{(\overline{z}, y) \,:\, (z,y)\in  \mathcal{C}_d\}.
$
We can now give the following generalization of the classical Farey tessellation using the sets $\mathcal{C}_d$ and $\mathcal{C}'_d$ as fundamental domains.

\begin{theorem}\mbox{}
\begin{itemize}
\item[(i)]\, If $d\in\{1,2\}$ then the collection $\mathcal{T}_d=\{ A\cdot \mathcal{C}_d\,:\, A\in \mathrm{PSL}(2,\mathbb{Z}[\omega_d]) \}$ forms a tessellaton of $\mathbb{H}^3$. 
\item[(ii)]\, If $d\in\{3,7,11\}$ then the collection $$\mathcal{T}_d=\{ A\cdot \mathcal{C}_d\,:\, A\in \mathrm{PSL}(2,\mathbb{Z}[\omega_d]) \} \cup \{ A\cdot \mathcal{C}'_d\,:\, A\in \mathrm{PSL}(2,\mathbb{Z}[\omega_d]) \}$$ forms a tessellaton of $\mathbb{H}^3$. 
\end{itemize}
The tessellation $\mathcal{T}_d$ is called the \emph{Farey tessellation} of $\mathbb{H}^3$ induced by $\mathrm{PSL}(2,\mathbb{Z}[\omega_d])$.
\end{theorem}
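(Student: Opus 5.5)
We need to prove two things: the translates of the cells cover $\mathbb{H}^3$, and distinct translates have disjoint interiors. The plan is to first build a fundamental-domain-like structure for the stabilizer of $\infty$. Then we use the wall arrangement $\mathcal{W}=\{A\cdot X : A\in\mathrm{PSL}(2,\mathbb{Z}[\omega_d]),\ X \text{ a wall}\}$, a $\mathrm{PSL}(2,\mathbb{Z}[\omega_d])$-invariant family of hyperbolic planes, to chop $\mathbb{H}^3$ into cells.

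\textbf{Step 1: disjointness.} By Theorem~\ref{GeodesicIntersectW} (applied to the edges of Farey polygons, whose determinants are $1$), no plane $A\cdot X$ is crossed transversally by another plane $B\cdot X$ in its interior. The only intersections are along common Farey edges, by Proposition~\ref{prop:wallTessellation} and Proposition~\ref{lem:GeodesicIntersectGeneral}. Hence the complement $\mathbb{H}^3\setminus\bigcup\mathcal{W}$ decomposes into connected components, and the group permutes these components. By Definition~\ref{def:standard} and Lemma~\ref{lem:boundFarey}, the interior of $\mathcal{C}_d$ is exactly one such component: the component lying above all hemispheres in $W_d\times\mathbb{R}_+$. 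The translates $A\cdot\mathcal{C}_d$ are therefore closures of components, and any two of them are either equal or have disjoint interiors. For $\mathcal{C}_d'$ I note that complex conjugation normalizes the Bianchi group (the ring is conjugation-invariant) and maps $\mathcal{W}$ to itself. Hence $\mathcal{C}_d'$ is also the closure of a component, so the same dichotomy holds for all tiles.

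\textbf{Step 2: covering.} Every component $K$ of the complement must be shown to be a translate of $\mathcal{C}_d$ or $\mathcal{C}_d'$. By Lemma~\ref{lem:boundFarey} and Corollary~\ref{cor:finitecusp}, the boundary of $K$ consists of finitely many Farey polygons, and $K$ has a cusp $\tfrac pq$. Choose $A$ with $A(\infty)=\tfrac pq$. Then $A^{-1}K$ is a component with cusp $\infty$, so it lies above the hemispheres over some region bounded by vertical walls. The vertical walls through $\infty$ are the translates under the stabilizer $\Gamma_\infty$ (translations by $\mathbb{Z}[\omega_d]$, together with unit rotations $z\mapsto \varepsilon z$) of the edges of $W_d$. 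For $d\in\{1,2\}$ these lines give the square or rectangular lattice, so the cells over $\infty$ are translates of $W_d$; applying a translation gives $\mathcal{C}_d$. For $d\in\{3,7,11\}$ the lines form a triangulation of $\mathbb{C}$ by translates of $W_d$ and of the reflected triangle $\overline{W_d}+\text{const}$ (with vertices $0,1,\overline{\omega}_d$ up to translation). An upward-pointing triangle is a $\mathbb{Z}[\omega_d]$-translate of $W_d$, and a downward one is a translate of $\overline{W_d}$, i.e.\ the base of $\mathcal{C}_d'$ (using $1-\omega_d=\overline{\omega}_d$ when $d\equiv 3\pmod 4$). Here I must check that the vertical walls through $\infty$ really are exactly these lines. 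This uses the fact that the Farey edges from $\infty$ go to $\mathbb{Z}[\omega_d]$, so the walls through $\infty$ are planes spanned by Farey polygons with cusp $\infty$, and these are the listed lines. Finally, every point of $\mathbb{H}^3$ not on a wall lies in some component, and points on walls lie in closures, which gives the covering.

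\textbf{Main obstacle.} I expect Step 2 to be the main obstacle. It must be justified that every component has a cusp at all, rather than being bounded away from $\partial\mathbb{H}^3$, and that components above a base triangle or rectangle are exactly $\mathcal{C}_d$ or its translate. I would handle this by a descending argument: start from any point $w$ and apply translations and inversions $z\mapsto -1/z$ (the Euclidean algorithm in $\mathbb{Z}[\omega_d]$, as in the connectivity proposition) to increase its height until $w$ lies above all hemispheres $A\cdot X$. Heights are bounded, since the Ford spheres have radius $\le 1$ off $\infty$. The point then lies in the region over the lattice of vertical walls, and hence in a translate of $\mathcal{C}_d$ or $\mathcal{C}_d'$. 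The key estimate is that a point not above all hemispheres lies under some hemisphere of radius at most a bound, so the inversion strictly increases its height. Discreteness of the group then terminates the process.
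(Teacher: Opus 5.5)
Your route differs from the paper's. The paper gets disjointness from the definition of $\mathcal{C}_d$, and gets covering from face pairings (taken from Swan's generators) plus a topological argument: otherwise $\partial\bigl(\bigcup\mathcal{T}_d\bigr)$ would contain a face lying in only one tile. You instead use the invariant arrangement $\mathcal{W}$. Disjointness becomes ``components are permuted'', and covering becomes ``every component is a translate of a component at $\infty$''. Step~1 is fine; it tacitly needs local finiteness of $\mathcal{W}$, just as the paper tacitly needs it for $\mathcal{T}_d$. This route avoids the face-pairing generators and explains why $\mathcal{C}_d'$ appears exactly when $W_d$ is a triangle. Step~2, however, has a gap at exactly the point you flagged.

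\textbf{The gap.} First, Lemma~\ref{lem:boundFarey} and Corollary~\ref{cor:finitecusp} are statements about $\mathcal{C}_d$. Applying them to an arbitrary component $K$ presupposes what you want to prove.

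Second, the fallback descent does not work as stated. Translating by $-n$ and then applying $z\mapsto -1/z$ raises the height of $(z,t)$ if and only if $|z-n|^2+t^2<1$. Lying under a hemisphere of bounded radius does not give this. For $d=1$, the face $\FT\bigl(\tfrac01,\tfrac11,\tfrac{1}{1-\omega_1}\bigr)$ of $\mathcal{C}_1$ lies on $|z-\tfrac12|^2+t^2=\tfrac14$, which has radius $\tfrac12$. The covering radius of $\mathbb{Z}[i]$ is $1/\sqrt2$, and $\tfrac12+1/\sqrt2>1$. So only the specific position of that centre puts it inside the unit half-balls at $0$ and $1$.

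To run the descent, you would need a face-by-face check that every non-vertical face of $\mathcal{C}_d$ lies in the union of the unit half-balls centred at the vertices of $W_d$. That check uses the explicit face lists and is not supplied.

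\textbf{The fix.} You do not need the descent.
\begin{itemize}
\item[(1)] Rerun the proof of Lemma~\ref{lem:boundFarey} for $K$ itself. Distinct planes of $\mathcal{W}$ meet only along Farey edges (Propositions~\ref{prop:wallTessellation} and~\ref{lem:GeodesicIntersectGeneral}). Hence every $2$-dimensional piece of $\partial K$ is a union of whole Farey polygons, so $\overline{K}$ contains a Farey polygon $\Pi$ with some cusp $\xi$.
\item[(2)] Choose $g$ with $g(\xi)=\infty$. Then $g\Pi$ is vertical and reaches above height $1$.
\item[(3)] The estimate in the proof of Theorem~\ref{GeodesicIntersectW} with $\delta=1$ shows that the closed Ford ball of any cusp $\eta$ off the boundary line of a wall $X$ misses $X$. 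Take $\eta=A^{-1}(\infty)$ and use $A\cdot\mathcal{F}(\eta)=\mathcal{F}(A\eta)$. It follows that every non-vertical plane $A\cdot X$ misses $\{t\ge 1\}$.
\item[(4)] So near a relative-interior point of $g\Pi$ at height $>1$, only vertical planes occur, and $gK$ meets $\{t>1\}$. Hence $gK$ is the component over a lattice triangle or rectangle. That is, it is a translate, under the stabilizer $\Gamma_\infty$ of $\infty$, of $\mathcal{C}_d^\circ$ or $(\mathcal{C}_d')^\circ$.
\end{itemize}
Together with your identification of the vertical planes of $\mathcal{W}$ with the lattice lines, this completes the covering.
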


\begin{proof}
Using the generators of $\mathrm{PSL}(2,\mathbb{Z}[\omega_d])$ provided in \cite{Swan:71}, it is easy to see that for each face $f$ of the polyhedron $\mathcal{C}_d$ there is an element $A_f\in \mathrm{PSL}(2,\mathbb{Z}[\omega_d])$ for which $\mathcal{C}_d \cap (A_f\cdot\mathcal{C}_d) = f$ or, if $d\in\{3,7,11\}$, $\mathcal{C}_d \cap (A_f\cdot\mathcal{C}'_d) = f$. For $d\in\{3,7,11\}$ the same is true if the roles of $\mathcal{C}_d$ and $\mathcal{C}'_d$ are interchanged. 
Thus, each face $f$ of each polyhedron in $\mathcal{T}_d$ is equal to the intersection of two elements of $\mathcal{T}_d$.

By the definition of $\mathcal{C}_d$, for each $A \in \mathrm{PSL}(2,\mathbb{Z}[\omega_d])$ either $\mathcal{C}_d^\circ \cap (A\cdot\mathcal{C}_d)^\circ=\emptyset$ or $\mathcal{C}_d^\circ \cap (A\cdot\mathcal{C}_d)^\circ=\mathcal{C}_d^\circ$ holds. For $d\in \{3,7,11\}$ it follows that $\mathcal{C}_d^\circ \cap (A\cdot\mathcal{C}'_d)^\circ=\emptyset$ because $(\mathcal{C}'_d)^\circ$ is obviously disjoint from each set in $\{A\cdot \mathcal{C}_d\,:\,A \in \mathrm{PSL}(2,\mathbb{Z}[\omega_d])\}$.

It remains to show that $\mathcal{T}_d$ covers $\mathbb{H}^3$. Suppose this is not true. Then $K=\bigcup_{ \mathcal{D} \in  \mathcal{T}_d} \mathcal{D}$ is a strict subset of $\mathbb{H}^3$ and, hence, the boundary $\partial K$ of $K$ relative to $\mathbb{H}^3$ is nonempty. Choose $(z,x)\in \partial K$. Since $x>0$, there is a ball $B$ (w.r.t.\ the hyperbolic metric) 
centered at $(z,x)$. 
Because only finitely many elements of $\mathcal{T}_d$ intersect $B$, each of the sets $B \cap K$ and $B \cap (\mathbb{H}^3 \setminus K)$ is topologically  $3$-dimensional. Thus these two sets cannot be separated by a  $1$-dimensional set and, hence, $B\cap \partial K$ is $2$-dimensional. Thus there is $\mathcal{D}\in\mathcal{T}_d$ such that the ball $B$ contains an inner point (w.r.t.\ $\partial \mathcal{D}$) of a face $f$ of $\mathcal{D}$ which is contained in only one element of $\mathcal{T}_d$. This contradicts the first paragraph of this proof and, hence, $\mathcal{T}_d$ covers $\mathbb{H}^3$.
Summing up we proved that $\mathcal{T}_d$ tessellates~$\mathbb{H}^3$.
\end{proof}

The following lemma is an immediate consequence of the definitions.

\begin{lemma}
Let $d\in\{1,2,3,7,11\}$. The edges of the {\em Farey graph} $\mathcal{E}_{d}$ coincide with the edges of the polyhedra in the tessellation $\mathcal{T}_d$.
\end{lemma}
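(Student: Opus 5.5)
We prove the two inclusions separately. By ``edges of the polyhedra'' we mean the geodesic sides of the Farey polygons that make up the faces of the cells $A\cdot\mathcal{C}_d$ (and $A\cdot\mathcal{C}'_d$) of $\mathcal{T}_d$.

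\emph{Polyhedron edges are edges of $\mathcal{E}_d$.} By the characterization proposition for the standard cells, every face of $\mathcal{C}_d$ is a Farey polygon. Each side of a Farey polygon joins two neighboring cusps $\frac pq,\frac rs$ with $|ps-qr|=1$; this can be read off from $\triangle$, $\square$, $\hexagon$ and Figure~\ref{fig:fareyhex}, and it transports under $A$. By Proposition~\ref{prop:EdCrit}, each such side is an edge of $\mathcal{E}_d$. For $\mathcal{C}'_d$ we use that complex conjugation preserves $\mathbb{Z}[\omega_d]$ and the relation $|ps-qr|=1$. Since $\mathcal{E}_d$ is invariant under $\mathrm{PSL}(2,\mathbb{Z}[\omega_d])$, the edges of every cell $A\cdot\mathcal{C}_d$ and $A\cdot\mathcal{C}'_d$ are edges of $\mathcal{E}_d$.

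\emph{Edges of $\mathcal{E}_d$ are polyhedron edges.} Every edge of $\mathcal{E}_d$ has the form $A\cdot(\infty,0)$ with $A\in\mathrm{PSL}(2,\mathbb{Z}[\omega_d])$. The tessellation is invariant under the group: for $d\in\{3,7,11\}$ this needs $A\cdot\mathcal{C}'_d\in\mathcal{T}_d$, which holds by the definition of $\mathcal{T}_d$. So it suffices to show that the single geodesic $(\infty,0)$ is an edge of some cell. It is a side of $\FT\big(\frac01,\frac10,\frac11\big)$, which the characterization lists as a face of $\mathcal{C}_d$ for every $d$. Hence $(\infty,0)$ is an edge of $\mathcal{C}_d$, and consequently $A\cdot(\infty,0)$ is an edge of $A\cdot\mathcal{C}_d\in\mathcal{T}_d$.

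The only point that needs care is the first step's claim that polygon sides always join neighboring cusps. Diagonals of quadrangles and hexagons have $|ps-qr|\in\{|\omega_d|,\sqrt3,2\}$, but these are not sides of the polygons. By Figure~\ref{fig:fareyhex}, the sides of $\square$ and $\hexagon$ have determinant $1$, so the claim holds.
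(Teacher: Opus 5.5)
Your argument is correct and takes essentially the same approach as the paper, which offers no proof beyond calling the lemma an immediate consequence of the definitions. You spell that out: $\mathcal{E}_d$ and $\mathcal{T}_d$ are both $\mathrm{PSL}(2,\mathbb{Z}[\omega_d])$-orbits, $(\infty,0)$ is an edge of $\mathcal{C}_d$, and every side of a face of $\mathcal{C}_d$ or $\mathcal{C}'_d$ joins cusps with $|ps-qr|=1$, so Proposition~\ref{prop:EdCrit} applies.
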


\begin{example}[The case $d=7$]
For $d=7$ the standard cell $\mathcal{C}_7$ is given by the $5$-hedron depicted in the middle of Figure~\ref{fund3}. Note that $\infty$ is a vertex of this $5$-hedron, three of its faces are walls. The lower wall is a Farey triangle, while the two other walls are Farey quadrangles. The reflection matrices for the five faces of the $5$-hedron $\mathcal{C}_7$ are given as follows. 
\begin{figure}
\begin{tikzpicture}[scale=0.3]
\node at (0,-7) {$(A)$};
\node at (7,-7) {$(B)$};
\node at (18,-7) {$(C)$};
\node at (29,-7) {$(D)$};

\node[draw=none,fill=none] at (0,0){\includegraphics[trim=0 -10 0 120, width=.26\textwidth]{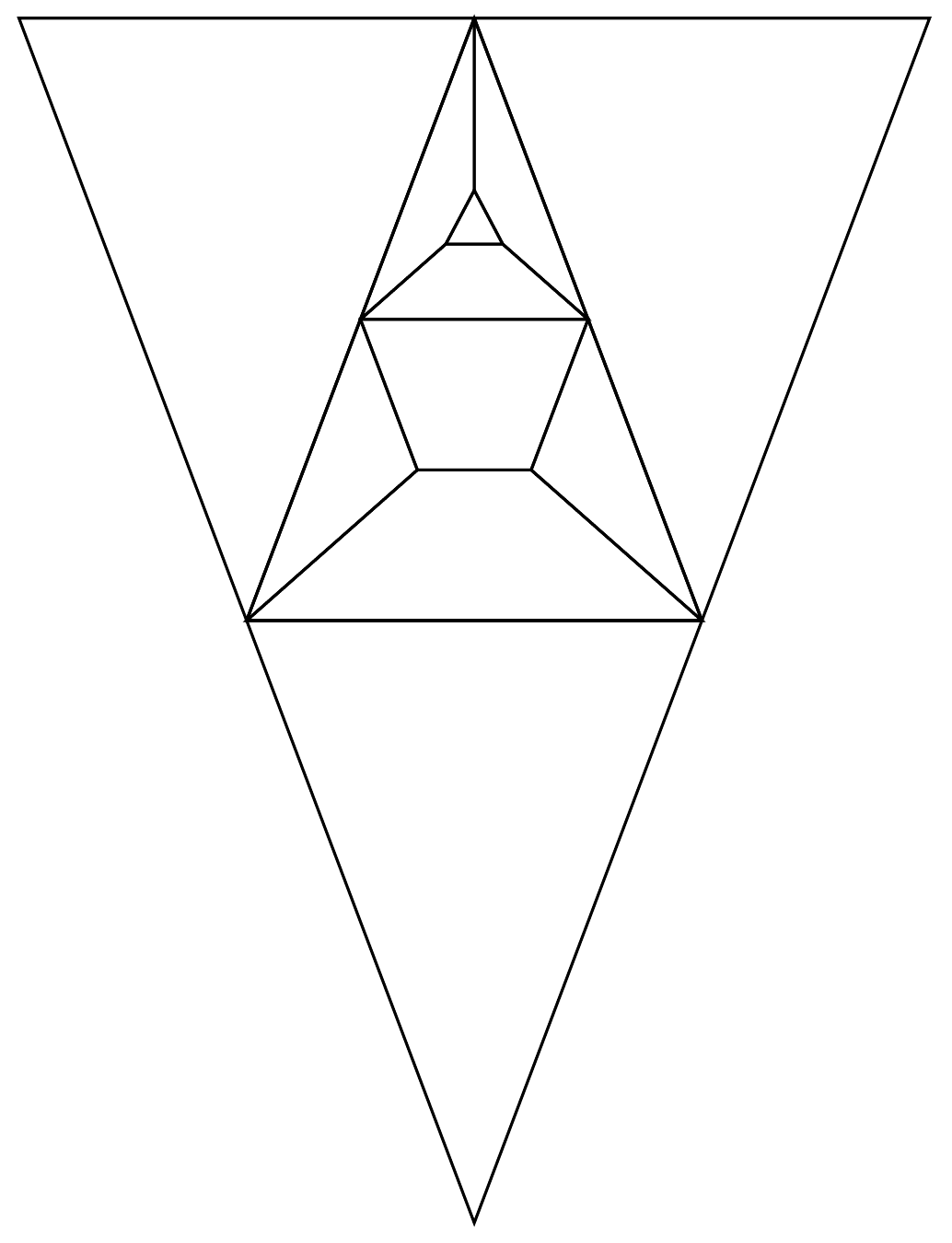}};
\node[draw=none,fill=none] at (7,0){\includegraphics[trim=0 -10 0 120, width=.26\textwidth]{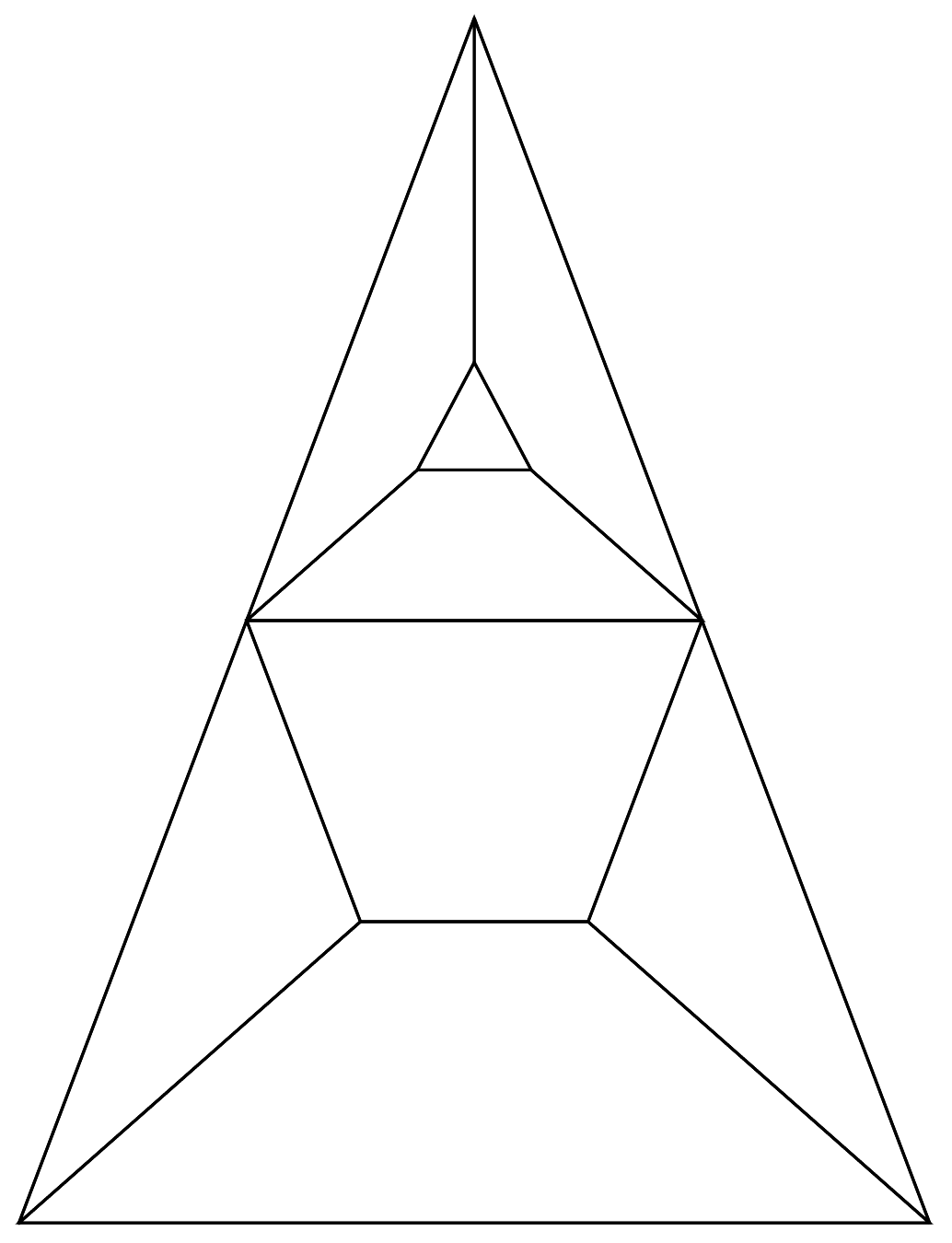}};
\node[draw=none,fill=none] at (18,0.935){\includegraphics[trim=0 -10 0 120, width=.26\textwidth]{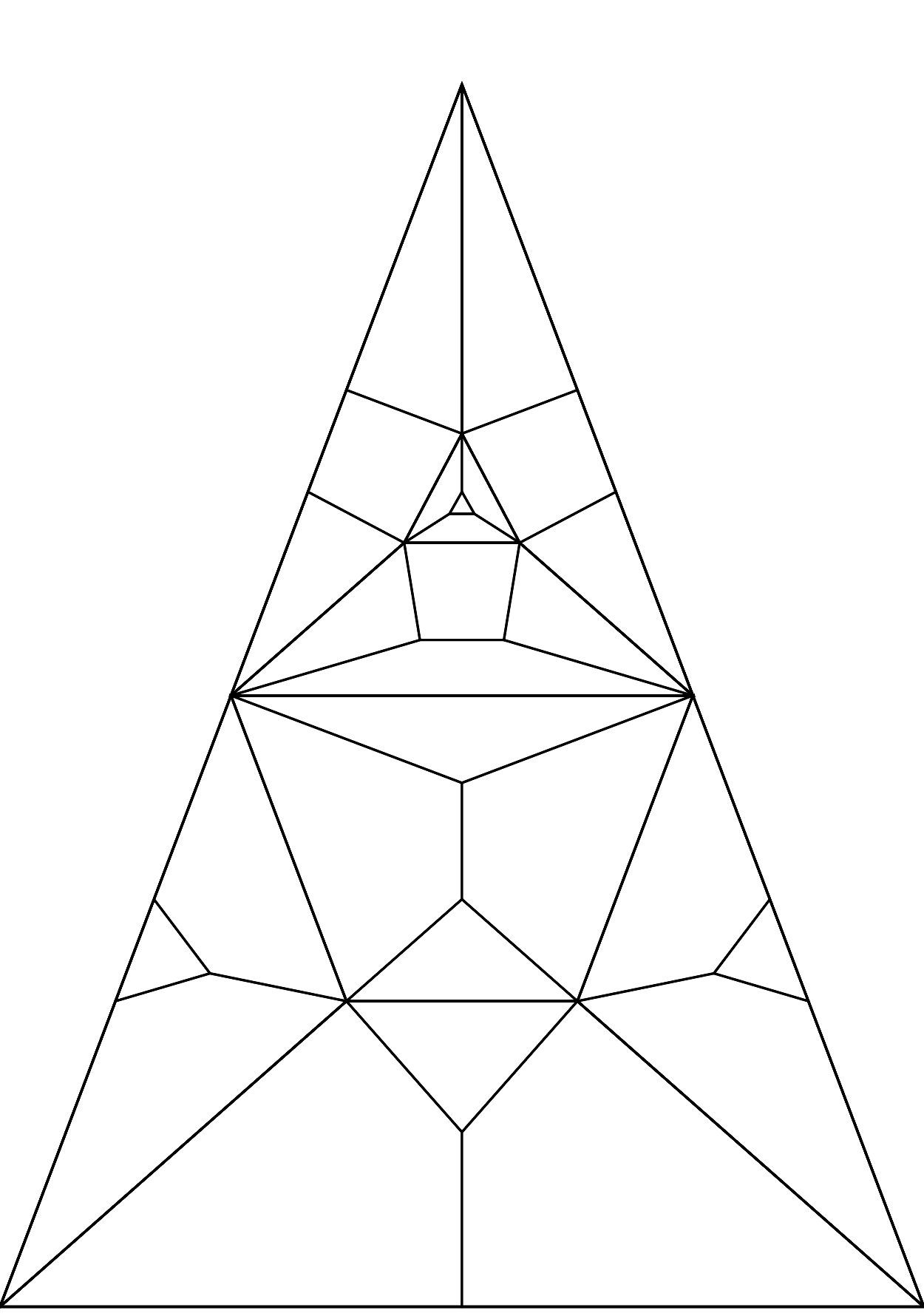}};
\node[draw=none,fill=none] at (29,0.935){\includegraphics[trim=0 -10 0 120, width=.26\textwidth]{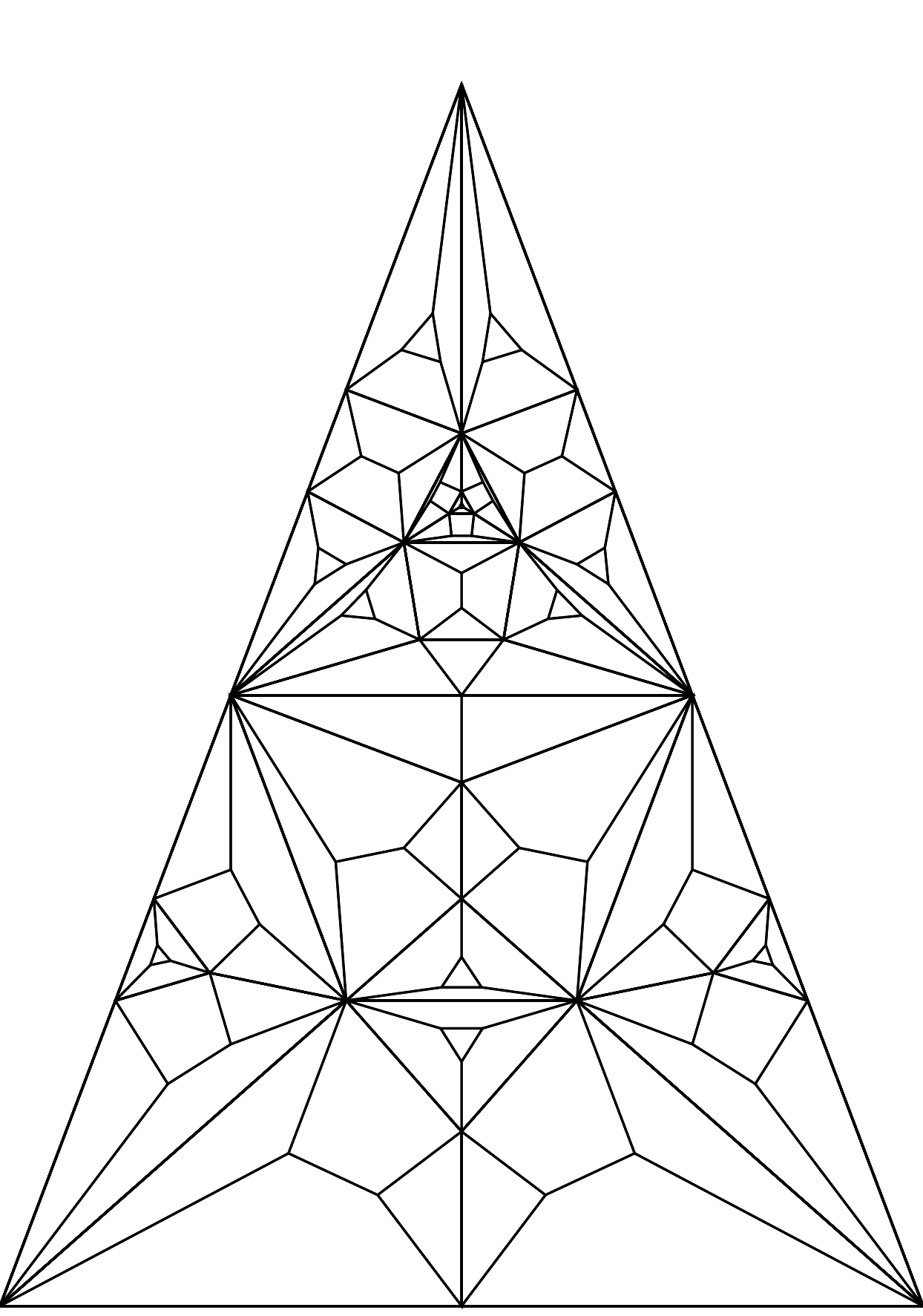}};
\end{tikzpicture}
\caption{(A) The reflections of $\mathcal{C}_7$ along its $5$ faces. (B) Only the reflections across Farey polygons ``below'' $\mathcal{C}_7$, drawn larger. (C) and~(D): Reflecting across the new faces.
\label{fig:7reflection}}
\end{figure}
\begin{itemize}
\item Reflection across lower wall: $R_1=\begin{pmatrix} -1&1\\0&1 \end{pmatrix}$.
\item Reflection across right wall: $R_2=\begin{pmatrix} -1&\omega_7+1\\0&1 \end{pmatrix}$.
\item Reflection across left wall: $R_3=\begin{pmatrix} -1&\omega_7\\0&1 \end{pmatrix}$.
\item Reflection across upper triangle: $R_4=\begin{pmatrix} 1&-\omega_7\\1&\omega_7-1 \end{pmatrix}$.
\item Reflection across lower quadrangle: $R_5=\begin{pmatrix} \omega_7&-1\\1&\omega_7-1 \end{pmatrix}$.
\end{itemize}
\begin{figure}[h]
\includegraphics[width=0.85\textwidth]{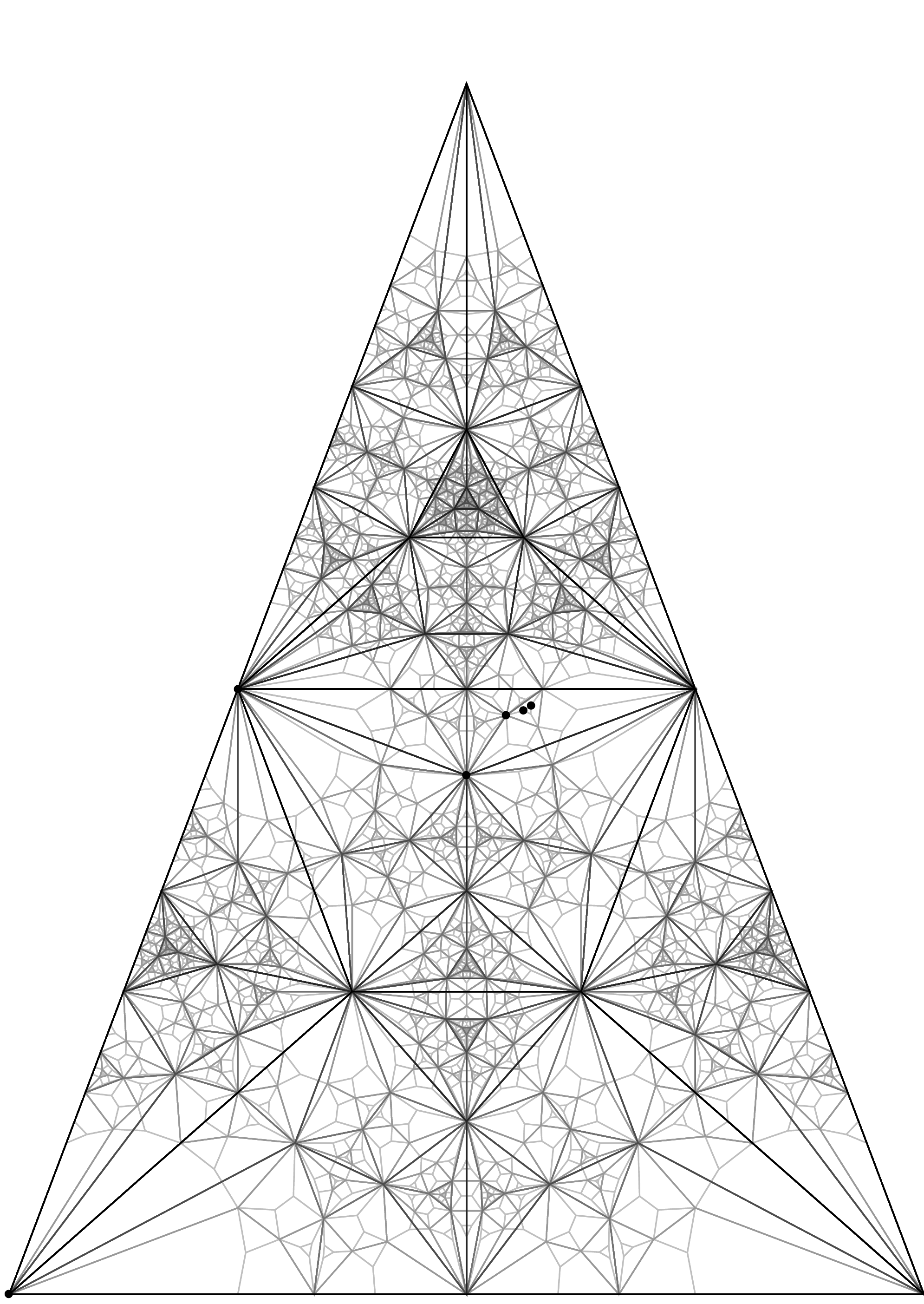} \qquad
\caption{Reflecting across the new faces for six times.
\label{fig:7reflection6}}
\end{figure}
Figure~\ref{fig:7reflection}~(A) shows $\mathcal{C}_7$ together with the reflections $R_k\mathcal{C}_7$ across its five faces ($k\in\{1,2,3,4,5\}$). Of course, each of these reflections is a $5$-hedron in $\mathbb{H}^3$. Observe that the reflections $R_4\mathcal{C}_7$ and $R_5\mathcal{C}_7$ in Figure~\ref{fig:7reflection}~(A) look like subdivisions of the upper triangle and the lower quadrangle of $\mathcal{C}_7$. This is what Schmidt~\cite{SchmidtAsmus:69,SchmidtAsmus:78} calls the {\it subdivision of Farey polygons}. We think that it is misleading to speak about such subdivisions, even though Schmidt mentions that these subdivisions are not to be understood geometrically. It seems to be better to keep in mind that these are just edges of reflected $5$-hedra (see also the work of Nakada~\cite{N-Monatshefte,N}, which shows that Farey polygons and Schmidt's complex continued fraction algorithms become more natural when one considers them in~$\mathbb{H}^3$). 
We concentrate on the part of the tessellation $\mathcal{T}_7$ ``below'' $\mathcal{C}_7$ (the remaing part of the tessellation can be obtained by translations and reflection across the origin).  This is shown in Figure~\ref{fig:7reflection}~(B). We note that these reflections yield two new polyhedra, and eight of their faces are new. We now reflect about the new faces. This yields the union of polyhedra depicted in Figure~\ref{fig:7reflection}~(C). We can go on and reflect along all the new faces to obtain Figure~\ref{fig:7reflection}~(D). Doing this for six times we obtain Figure~\ref{fig:7reflection6}. The ``generations'' of the reflections are indicated in different gray scales in this figure.\footnote{The meaning of the six black dots in the figure will become clear in Example~\ref{ex:exd7}
; they define a path that will be of interest to us.} Note that these ``generations'' are the generalization of the levels of the Farey tree in the classical case mentioned in the introduction.
The edges visible in Figures~\ref{fig:7reflection} and~\ref{fig:7reflection6} are edges of the Farey graph $\mathcal{E}_7$. Thus we can see walks of $\mathcal{E}_7$ in these images.
\end{example}

\section{Geodesic complex continued fraction algorithms}  

The aim of this section is to prove that the nearest integer continued fraction algorithms $T_1$, $T_2$, and $T_3$ from Definition~\ref{def:nicf} always produce geodesic continued fraction expansions. Recall that $T_d$ is defined in the region $U_d$ (see again Definition~\ref{def:nicf}) ($d\in \{1,2,3\}$).
For each $d \in \{1, 2, 3\}$, $\partial U_{d}$ consists of $4$ or $6$ line segments.  Suppose that $L_{d}$ is a line which contains one of these segments of $\partial U_{d}$.  We need  the following preparatory lemma.  

\begin{lemma}\label{lem:prepgeoalg}
For $d\in \{1,2,3\}$ let $z \in \mathbb Q(\sqrt{-d})$.  Then the reflection $\hat{z}$ of $z$ across 
$L_{d}$ is contained in $\mathbb Q(\sqrt{-d})$ and the Ford spheres $\mathcal F_d(z)$ 
and $\mathcal F_d(\hat{z})$ have the same radius.  
\end{lemma}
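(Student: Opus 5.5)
The plan is to describe the reflection $z\mapsto\hat z$ across each line $L_d$ explicitly as an anti-holomorphic affine map of $\mathbb{C}$, namely $\hat z=\varepsilon\overline{z}+\gamma$, where $\varepsilon$ is a unit of $\mathbb{Z}[\omega_d]$ and $\gamma\in\mathbb{Z}[\omega_d]$. Once this form is established, both claims follow at once.

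First, we list the lines. For $d\in\{1,2\}$ the sides of $U_d$ lie on $\Re z=\pm\tfrac12$ and $\Im z=\pm\tfrac{|\omega_d|}{2}$, and the reflections are
\[
\hat z=-\overline z\pm1 \quad\text{and}\quad \hat z=\overline z\pm\omega_d ,
\]
using $\omega_d=i|\omega_d|$. For $d=3$ the six sides of the hexagon $U_3$ are the perpendicular bisectors of the segments from $0$ to the six units $\pm1,\pm\omega_3,\pm\omega_3^2$. The reflection across the bisector of $0$ and a unit $u$ is $\hat z=u-u^2\overline z$: this map fixes the bisector pointwise and sends $0$ to $u$, and it is an isometric anti-holomorphic map. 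Since $u^2$ is again a unit, this has the required form with $\varepsilon=-u^2$ and $\gamma=u$. The cases of non-unit $\omega_d$ translations are checked in the same way for $d\in\{1,2\}$.

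Second, we show $\hat z\in\mathbb{Q}(\sqrt{-d})$ with the same Ford radius. Complex conjugation maps $\mathbb{Z}[\omega_d]$ to itself, hence maps $\mathbb{Q}(\sqrt{-d})$ to itself. Write $z=\tfrac pq$ with $\gcd(p,q)=1$. Then
\[
\hat z=\frac{\varepsilon\overline p+\gamma\overline q}{\overline q}.
\]
Any common divisor of the numerator and of $\overline q$ divides $\varepsilon\overline p$, hence divides $\overline p$. Since $\gcd(\overline p,\overline q)=\overline{\gcd(p,q)}=1$, the fraction is reduced. Its denominator $\overline q$ satisfies $|\overline q|=|q|$, so by the definition of Ford spheres the radii $\tfrac1{2|q|^2}$ of $\mathcal F_d(z)$ and $\mathcal F_d(\hat z)$ coincide. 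The case $z=\infty$ is trivial, since then $\hat z=\infty$.

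The only real work is the first step, namely getting the correct affine formulas for the reflections. The main point to check carefully is that the translation part $\gamma$ is an algebraic integer, for instance $1$ or $\omega_d$ rather than a half-integer. This holds because each line $L_d$ is the perpendicular bisector between $0$ and a lattice point. I would verify this by noting that $0\mapsto\gamma$ is a lattice point in every case.
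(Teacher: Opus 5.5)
Your proposal is correct and follows the same route as the paper: write each reflection explicitly as complex conjugation composed with a unit multiple and a translation by an element of $\mathbb{Z}[\omega_d]$, so that $\frac{p}{q}$ is sent to a fraction with denominator $\overline{q}$, and then use $|\overline{q}|=|q|$. You go slightly further than the paper by checking that this fraction is still reduced, and by giving the uniform formula $\hat z=u-u^2\overline{z}$ for the six sides of the hexagon $U_3$, which the paper covers only with ``similar reasoning''.
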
 
\begin{proof} 
Suppose that $L_{d}$ is the line $\Re(w) = \tfrac{1}{2}$.  Then $\hat{z} = 
-(\overline{z}-1)$. If we write $z = \tfrac{P}{Q}$ with $\gcd(P,Q)=1$, then $\hat{z} = \tfrac{\overline{Q} - 
\overline{P}}{\overline{Q}}$ and, hence, $|Q| = |\overline{Q}|$. This implies the assertion of the 
lemma.  For $d=1$ the other possible lines are $\omega_1 L_{d}$, $k \in \{1, 2, 3\}$ and the assertion follows analogously for these lines. In the case $d=2$, if $L_d$ is one of the lines satisfying $\Im(w)=\pm \frac{\omega_2}{2}$, then $\hat z= \overline{z}\pm \omega_2$. For $d=3$, the other possible lines are given by $\omega_{3}^{k} L_{d}$, $k\in\{1, 2, 3, 4, 5\}$. For each of these lines the lemma follows by similar reasoning.   
\end{proof}

We can now prove the following main result of this section.

\begin{theorem}
Let $d \in \{1, 2, 3\}$. Then the nearest integer complex continued fraction map~$T_d$ always produces geodesic continued fraction expansions. 
\end{theorem}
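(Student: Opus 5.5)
We argue by induction on the length of the expansion, following the strategy of \cite{B-H-S} and \cite{N2}, with the Ford spheres $\mathcal F_d$ serving as the basic metric tool. By Theorem~\ref{th:correspondencePathCF} the convergents $\tfrac{p_0}{q_0},\ldots,\tfrac{p_n}{q_n}$ of the expansion of $z$ produced by $T_d$ form a walk $\infty\to \tfrac{p_0}{q_0}\to\cdots\to\tfrac{p_n}{q_n}$ in $\mathcal E_d$. We must show that this walk is a shortest walk from $\infty$ to $\tfrac{p_n}{q_n}$ for every $n$. Since every element of $\mathrm{PSL}(2,\mathbb{Z}[\omega_d])$ is a graph automorphism of $\mathcal E_d$, the map $w\mapsto S_{a_1}^{-1}(w)$ sends the walk for $z$ to $\infty$ followed by the walk for $T_d(z)$, shifted by one. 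This reduces the problem to a ``one-step'' statement.

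\textbf{Step 1 (key claim).} Let $w\in U_d\cap\mathbb{Q}(\sqrt{-d})$ and $a=a_d(w)$, and write $x=1/w$. Suppose that $\infty\to v_1\to\cdots\to v_m=x$ is any walk. We claim that there is a geodesic walk from $\infty$ to $x$ passing through $a$ as its first vertex, in other words $\mathrm{dist}(a,x)=\mathrm{dist}(\infty,x)-1$. Equivalently, $a$ is among the integers $b\in\mathbb{Z}[\omega_d]$ (the neighbours of $\infty$) minimizing $\mathrm{dist}(b,x)$. Granting this claim, induction on the number of steps of the algorithm finishes the proof: the tail expansion of $T_d(z)$ is geodesic by induction, and translating back preserves geodesicity.

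\textbf{Step 2 (proof of the claim via walls and reflections).} Let $b\ne a$ be an integer with a geodesic walk $b\to\cdots\to x$. Then $x-a\in U_d^{-1}$, the exterior of the union of the unit disks centred at the lattice points neighbouring $0$; in particular $x$ lies in the Voronoi cell of $a$. Translate so that $a=0$. The integer $b$ then lies on the other side of one of the lines $L_d$ bounding the Voronoi cell, namely a perpendicular bisector between $0$ and a neighbouring lattice point. Let $\rho$ be the reflection across that line, viewed as an isometry of $\mathbb H^3$. By Lemma~\ref{lem:prepgeoalg}, $\rho$ maps $\mathbb{Q}(\sqrt{-d})$ to itself and preserves the radii of Ford spheres. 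Hence $\rho$ preserves the tangency of Ford spheres, and therefore preserves $\mathcal E_d$ (it is an anti-Möbius element normalizing the Bianchi group). The walk $b\to\cdots\to x$ must cross the vertical wall above $L_d$, or at least touch it at a vertex, because Theorem~\ref{GeodesicIntersectW}, applied to the walls of the lattice tiling (the bisector lines are walls of translates of $W_d$ or images thereof), forbids an edge from crossing it. Reflect the initial segment of the walk, up to its last vertex on the wall, by $\rho$. The result is a walk of the same length starting at $\rho(b)$, which is closer to $0$. Iterating this procedure, we obtain a walk of the same length starting at an integer that lies in the closed cell of $0$, and that integer must be $0=a$ itself. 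This shows $\mathrm{dist}(a,x)\le\mathrm{dist}(b,x)$.

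\textbf{Main obstacle.} The hard part is Step 2: making precise that an edge of $\mathcal E_d$ cannot cross the vertical planes over the bisector lines $L_d$. The line $\Re w=\tfrac12$ is not a wall in the sense of Definition~\ref{def:standard}. We would handle this by the Ford-sphere argument of Theorem~\ref{GeodesicIntersectW}: a point $p/q$ off $L_d$ has distance at least $c/|q|^2$ from $L_d$, with $c$ large enough relative to the Ford radius $1/(2|q|^2)$. If this estimate fails for some line (for instance the hexagon sides when $d=3$), we would replace it by the half-integer shifts, which are walls, combined with the unit symmetries $\omega_d^k$.

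A second subtlety concerns ties on the boundary of $U_d$, that is, the half-open conventions in the definition of $U_d$. We would dispose of these by noting that reflection across $L_d$ maps the competing integer to $a$ while fixing $x$ (Lemma~\ref{lem:prepgeoalg}), so equal distances are harmless.
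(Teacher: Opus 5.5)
\textbf{There is a genuine gap in Step~2.} Your reduction is the same mechanism the paper uses: show that the nearest integer $a$ starts a geodesic walk from $\infty$ to $x$, then push any competing first vertex $b$ towards $a$ by reflecting initial segments across the lines $L_d$, using Lemma~\ref{lem:prepgeoalg}. But Step~2 rests on a claim that is false: edges of $\mathcal E_d$ \emph{can} cross the vertical plane over $L_d$.

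Take $L_d=\{\Re w=\tfrac12\}$, which bounds $U_d$ for every $d\in\{1,2,3\}$, with reflection $\rho(w)=1-\overline{w}$. Then $0\to 1$ is an edge, its endpoints lie strictly on opposite sides of $L_d$, and the geodesic $(0,1)$ meets the plane at $\tfrac12+\tfrac12 j$.

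Your proposed repair cannot work. The estimate $|\Re(\tfrac pq)-\tfrac12|=\frac{|2\Re(p\overline q)-|q|^2|}{2|q|^2}\ge\frac{1}{2|q|^2}$ gives exactly the Ford radius, with equality attained (e.g.\ at $\tfrac pq=0$). So the argument of Theorem~\ref{GeodesicIntersectW} only shows that no Ford sphere \emph{crosses} the plane, not that it is disjoint from it. Nor can you swap $L_d$ for walls. Half-integer shifts of walls are not walls, since walls lie over lines through lattice points. Moreover, it is precisely the bisectors bounding $U_d$ that separate $b$ from $a$ while keeping $x$ on $a$'s side. Consequently a geodesic walk $b\to\cdots\to x$ can jump across $L_d$ without any vertex on it (already $1\to 0$ does), and your reflection step has nothing to act on.

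\textbf{The missing ingredient} is the paper's analysis of crossing edges, its cases (a) and (b).
\begin{itemize}
\item Let $v\notin L_d$. If the interior of $\mathcal F_d(v)$ met the plane, it would overlap $\mathcal F_d(\rho(v))$, which has the same radius by Lemma~\ref{lem:prepgeoalg}. Hence every such Ford sphere lies in a closed half-space.
\item If an edge $v\to v'$ crosses the plane, the tangency point of $\mathcal F_d(v)$ and $\mathcal F_d(v')$ must therefore lie on the plane. Then $\mathcal F_d(v')$ and $\mathcal F_d(\rho(v))$ touch the plane at the same point from the same side, which forces $v'=\rho(v)$.
\item In that case, reflecting the initial segment up to $v$ gives a walk from $\rho(b)$ to $x$ that is one edge \emph{shorter}, which is even better for your purposes.
\end{itemize}

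Once this case is added, your descent on $|b|$ goes through. The proof then becomes essentially the paper's, organized as a one-step lemma plus induction on the algorithm rather than via the first index $m$ where the two walks diverge.

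A minor slip: it is $x-a$ that lies in $U_d$, and $x=1/w$ that lies in $U_d^{-1}$.
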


\begin{proof}
Let $\frac{P}{Q} \in U_{d}$ be given. Suppose that
$\frac{P}{Q}= [a_1,\ldots, a_n]$ is the nearest integer continued fraction expansion of 
$\frac{P}{Q}$. If $\frac{P_k}{Q_k}$ is the $k$-th convergent of $\frac PQ$, in view of Theorem~\ref{th:correspondencePathCF} this expansion corresponds to the walk
\begin{equation}\label{eq:HurwitzPath}
\infty=\frac{P_{-1}}{Q_{-1}} \to 0=\frac{P_0}{Q_0} \to \frac{P_1}{Q_1} \to \frac{P_2}{Q_2} \to \cdots \to \frac{P_n}{Q_n}=\frac{P}{Q}
\end{equation}
in the Farey graph $\mathcal{E}_{d}$. Now suppose that
$\infty \to r_0 \to r_1 \to \cdots \to r_{\ell}=\frac{P}{Q}$ is a geodesic path in $\mathcal{E}_{d}$. We need to show that \eqref{eq:HurwitzPath} is also a geodesic path in $\mathcal{E}_{d}$. If these two walks are equal there is nothing to do. If not, we choose $m$ such that $\frac{P_{k}}{Q_{k}}=r_{k}$ holds for $0 \leq k <m$, and $\frac{P_m}{Q_m} \neq r_m$. Thus $\infty \to \frac{P_0}{Q_0}\to\cdots\to \frac{P_{m-1}}{Q_{m-1}}$ is geodesic. Consider the linear fractional map 
$$
S=\begin{pmatrix}
0 & 1 \\
1 & a_{m-1}
\end{pmatrix}^{-1}
\begin{pmatrix}
0 & 1 \\
1 & a_{m-2}
\end{pmatrix}^{-1} 
\cdots 
\begin{pmatrix}
0 & 1 \\
1 & a_1
\end{pmatrix}^{-1}
$$
($S$ is the identity if $m=0$).  Then $S\big(\frac{P_{m-2}}{Q_{m-2}}\big)=S(r_{m-2})=\infty$, $S\big(\frac{P_{m-1}}{Q_{m-1}}\big)=S(r_{m-1})=0$, and $S\big(\frac{P_m}{Q_m}\big)=\frac{1}{a_m}$. 
Since M\"obius transformations preserve adjacency in $\mathcal{E}_d$ this implies that $0 \to S\left(r_m\right)$ and, hence, $\infty \to S\left(r_m\right)^{-1}$. Thus
$S\left(r_m\right)^{-1}\in\mathbb Z[\omega_{d}]$. By the choice of $m$ we have  $S\left(r_m\right)^{-1}\not=a_m$ and, hence, $S\left(r_m\right)^{-1}-a_m \notin U_d$. 
Writing $r_k^{(1)}=S\left(r_k\right)^{-1}-a_m$ for $k \geq m$ we see that  
\begin{equation}\label{eq:rprimepath}
\infty \to r_m^{(1)} \to \cdots \to r_\ell^{(1)}
\end{equation}
is a geodesic path in $\mathcal{E}_d$. We have $r_m^{(1)}\not\in{U_{d}}^{\mathrm{cl}}$ and  
\[
r_{\ell}^{(1)}=S(r_\ell)^{-1}-a_m= S\big(\frac PQ\big)^{-1}-a_m= T_{d}^{m-1}\big(\frac PQ\big)^{-1}-a_m = T_{d}^m\big(\frac PQ\big) \in U_{d}.
\]
Thus there exists $t\in\{m+1,\ldots,\ell\}$ such that $r_s^{(1)}\not\in {U_{d}}^{\mathrm{cl}}$ for $m \leq s<t$ and $r_t^{(1)}\in {U_{d}}^{\mathrm{cl}}$.  The following three cases can occur.
\begin{itemize}
\item[(a)]\,  $r_t^{(1)}$ is symmetric to $r_{t-1}^{(1)}$ w.r.t. some line $L_d$ containing a line segment of $\partial U_{d}$.
\item[(b)] \, $r_t^{(1)}$ not on $\partial U_{d}$  and not symmetric to $r_{t-1}^{(1)}$ w.r.t.\ any line $L_d$ containing a line segment of $\partial U_{d}$.
\item[(c)]\, $r_t^{(1)}$ is on some line $L_d$ containing a line segment of $\partial U_{d}$.
\end{itemize}

Case (a): For each $s\in\{m,\ldots, t-1\}$ let $r_s^{(2)}$ be symmetric to $r_s^{(1)}$ w.r.t.\ $L_d$. Then $r_{t-1}^{(2)}=r_t^{(1)}$. By Lemma~\ref{lem:prepgeoalg}, this yields the walk 
$\infty\to r_m^{(2)}\to \cdots \to r_{t-1}^{(2)}=r_t^{(1)}\to r_{t+1}^{(1)} \to \cdots \to r_\ell^{(1)}$ that is shorter than the geodesic path in \eqref{eq:rprimepath}, a contradiction.

Case (b): We consider the projections of the Ford spheres $G_{t-1}$ and $G_t$ associated with $r_{t-1}^{(1)}$ and $r_t^{(1)}$ to $\mathbb{C}$. These projections are circles. These circles cannot cross a line $L_d\subset \partial U_{d}$. Indeed, if the Ford sphere $G$ of some point $p\not\in L_d$ would cross $L_d$, it would intersect the Ford sphere $G'$ of the reflection of $p$ across $L_d$ (because, by Lemma~\ref{lem:prepgeoalg}, $G$ and $G'$ have the same radius), a contradiction.  So the Ford spheres $G_{t-1}$ and $G_t$ cannot be tangent to each other. Thus $r_{t-1}^{(1)}$ and $r_t^{(1)}$ are not adjacent in $\mathcal{E}_d$, a contradiction. 

Thus only case (c) can occur. In this case by Lemma~\ref{lem:prepgeoalg} we can construct a walk $\infty\to r_1^{(2)}\to \cdots\to r_t^{(2)}=r_t^{(1)}$, where $r_s^{(2)}$ is symmetric to $r_s^{(1)}$ w.r.t.\ $L_d$ for $s\in \{1,\ldots, t\}$. This walk has the same length as  the geodesic path $\infty\to r_1^{(1)}\to \cdots\to r_t^{(1)}$. Setting $r_k^{(2)}=r_k^{(1)}$ for $t \le k \le \ell$ yields a geodesic path $\infty \to r_m^{(2)}\to \cdots \to r_\ell^{(2)}$ and $t_2<t$ such that $r_s^{(2)} \not\in {U_{d}}^{\mathrm{cl}}$ for $m \leq s<t_2$ and $r_{t_2}^{(2)}\in {U_{d}}^{\mathrm{cl}}$. Iterating this procedure yields a geodesic path $\infty \to r_m^{(n)} \to \cdots \to r_\ell^{(n)}$ such that  $r_{m}^{(n)}\in{U_d}^{\mathrm{cl}}$, {\em i.e.} $r_m^{(n)}=0$. Thus
\[
\begin{split}
&\infty \to r_0 \to r_1 \to \cdots \to r_{m-1} \to S^{-1}\bigg(\frac1{a_m + r_m^{(n)}}\bigg)  \to  \cdots \to S^{-1}\bigg(\frac1{a_m + r_\ell^{(n)}}\bigg)
\end{split}
\]
is a  geodesic path from $\infty$ to $S^{-1}\bigg(\frac1{a_m + r_\ell^{(n)}}\bigg)=r_\ell=\frac PQ$ with $r_0=\frac {P_0}{Q_0},\ldots, r_{m-1}=\frac {P_{m-1}}{Q_{m-1}}$ and $S^{-1}\bigg(\frac1{a_m + r_m^{(n)}}\bigg)=\frac{P_m}{Q_m}$. Thus, in particular, $\infty \to \frac{P_0}{Q_0}\to\cdots\to \frac{P_{m}}{Q_{m}}$ is geodesic. The fact that \eqref{eq:HurwitzPath} is geodesic now follows by induction on $m$.
\end{proof}

\begin{remark} 
For the continued fraction algorithms $\mathrm{CF}(7, \mathrm{H})$ and $\mathrm{CF}(11, \mathrm{H})$ defined in Lakein~\cite{lakein1973approximation} for $d=7$ and $d=11$, respectively, the point $z \in U_{d}$ and its symmetric point $\hat z$ (see Lemma~\ref{lem:prepgeoalg} above) may have Ford spheres of different size, which causes problems.
\end{remark}

The following necessary conditions for a continued fraction expansion to be geodesic correspond to the conditions for the Hecke group case studied by Short and Walker \cite{S-W}. 

\begin{proposition}  \label{any-d}
Let $d \in\{ 1, 2, 3, 7, 11\}$ be given and suppose that $z = [a_{0}; a_{1}, a_{2}, \ldots ]$, $a_{n} \in \mathbb Z[\omega_{d}]$, $n \ge 0$, is geodesic. Then the following conditions hold for $n\ge 1$.
\begin{itemize}
\item[(i)]\;$|a_{n}| \ne 1$.
\item[(ii)]\;$|a_{n}| = \sqrt{2}$ implies $a_{n+1} \ne - \overline{a_{n}}$.
\item[(iii)]\; If $\zeta$ is a unit, then $a_{n} = 2 \zeta$ implies $a_{n+1} \ne - 2 \overline{\zeta}$. 
\end{itemize}
\end{proposition}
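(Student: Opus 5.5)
The plan is to argue by contradiction in each case: assuming the pattern of partial quotients occurs, we produce a strictly shorter walk in $\mathcal{E}_d$ between two vertices of the walk from Theorem~\ref{th:correspondencePathCF}. This contradicts geodesicity, since every subpath of a geodesic path is itself geodesic.

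\textbf{Normalization.} Write $R_k=\frac{p_k}{q_k}=S_{a_0}S_{a_1}\cdots S_{a_k}(\infty)$ with $S_\alpha=\begin{pmatrix}\alpha&1\\1&0\end{pmatrix}$, and put $R_{-1}=\infty$. The walk $\infty\to R_0\to R_1\to\cdots$ is geodesic by assumption. Let $M=S_{a_0}\cdots S_{a_{n-1}}$. Then $M^{-1}$ (multiplied by a unit if $\det M=-1$, as in the proof of Proposition~\ref{prop:EdCrit}) is a M\"obius transformation preserving adjacency in $\mathcal{E}_d$, since adjacency is characterized by $|ps-qr|=1$. A direct computation gives
\[
M^{-1}(R_{n-2})=0,\quad M^{-1}(R_{n-1})=\infty,\quad M^{-1}(R_n)=a_n,\quad M^{-1}(R_{n+1})=a_n+\tfrac1{a_{n+1}},
\]
where $M^{-1}(R_{n-2})=S_{a_{n-1}}^{-1}(\infty)=0$. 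This requires $n\ge1$, which the statement assumes. So it suffices to show that in the transformed walk $0\to\infty\to a_n\to a_n+\frac1{a_{n+1}}$ some subwalk can be shortened. The shortcut is then pulled back by $M$.

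\textbf{The three cases.}
\begin{itemize}
\item[(i)] If $|a_n|=1$, then $a_n$ is a unit and $|0\cdot 1-a_n\cdot 1|=1$. By Proposition~\ref{prop:EdCrit}, $0\to a_n$ is an edge, which replaces the walk $0\to\infty\to a_n$ of length $2$.
\item[(ii)] If $|a_n|=\sqrt2$ and $a_{n+1}=-\overline{a_n}$, then
\[
a_n-\frac{1}{\overline{a_n}}=a_n-\frac{a_n}{|a_n|^2}=\frac{a_n}{2}=\frac{1}{\overline{a_n}}.
\]
The fraction $\frac{1}{\overline{a_n}}$ is in lowest terms, and its determinant with $\frac01$ is $1$. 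Hence $0\to M^{-1}(R_{n+1})$ is a single edge replacing a walk of length $3$.
\item[(iii)] If $a_n=2\zeta$ and $a_{n+1}=-2\overline\zeta$, then
\[
a_n+\frac1{a_{n+1}}=2\zeta-\frac{\zeta}{2}=\frac{3\zeta}{2},
\]
a reduced fraction with numerator $3\zeta$ and denominator $2$. Now $0\to\zeta$ is an edge, as in (i). Moreover $|\zeta\cdot 2-3\zeta\cdot1|=|\zeta|=1$, so $\zeta\to\frac{3\zeta}{2}$ is an edge by Proposition~\ref{prop:EdCrit}. This gives a walk of length $2$ replacing one of length $3$.
\end{itemize}

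\textbf{Expected obstacle.} The only delicate point is the normalization step. One must check the determinant/unit bookkeeping so that $M^{-1}$ genuinely preserves $\mathcal{E}_d$. This is clear from Proposition~\ref{prop:EdCrit}, because $|ps-qr|$ is invariant under matrices of determinant $\pm1$ (or a unit), so this step should not cause real difficulty. One must also verify coprimality of the displayed numerators and denominators, which in each case is immediate since the denominator is a unit or the determinant computed is a unit. Everything else is the short computations above.
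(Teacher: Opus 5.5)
Your proof is correct and is essentially the paper's argument. The paper writes the same shortcuts as the continued-fraction identities $[a_0;\ldots,a_{n-1},a_n]=[a_0;\ldots,a_{n-1}+\overline{a_n}]$, $[a_0;\ldots,a_{n-1},a_n,-\overline{a_n}]=[a_0;\ldots,a_{n-1}+\overline{a_n}]$ and $[a_0;\ldots,a_{n-1},2\zeta,-2\overline{\zeta}]=[a_0;\ldots,a_{n-1}+\overline{\zeta},-3\zeta]$; after your normalization by $M^{-1}$ these are exactly your edges $0\to a_n$, $0\to 1/\overline{a_n}$ and $0\to\zeta\to 3\zeta/2$.

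One remark needs tidying. Rescaling $M^{-1}$ by a unit cannot repair $\det M=-1$ when $d\neq 1$. It is also unnecessary: as you note, $|ps-qr|$ is invariant under $\mathrm{GL}(2,\mathbb{Z}[\omega_d])$, so $M^{-1}$ preserves adjacency anyway.
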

\begin{proof}
Let $n\ge 1$. We have  $[a_0;a_{1}, a_{2}, \ldots, a_{n-1}, a_{n}] = [a_0;a_{1}, a_{2}, \ldots, a_{n-1} + \overline{a_{n}}]$ 
if $|a_{n}|=1$ and, hence, $[a_0;a_{1}, a_{2}, \ldots, a_{n}]$ is not geodesic. Also, if $|a_{n}| = \sqrt{2}$ we have
$[a_0;a_{1}, a_{2}, \ldots, a_{n-1}, a_{n}, -\overline{a_{n}}] = 
[a_0;a_{1}, a_{2}, \ldots, a_{n-1} + \overline{a_{n}}]$, and,  if $\zeta$ is a unit we see that
$[a_0;a_{1}, a_{2}, \ldots, a_{n-1}, 2 \zeta, -2\overline{\zeta}] = [a_0;a_{1}, a_{2}, \ldots, a_{n-1} +  \overline{\zeta}, - 3 \zeta]$.  In any of these cases we conclude that $[a_0;a_{1}, a_{2}, \ldots, a_{n},a_{n+1}]$ is not geodesic.   
\end{proof}

Proposition~\ref{any-d} is applicable to the partial quotients $\pm 1$, $\pm i$, $\pm(1+i)$, and $\pm(1-i)$ for $d=1$, to the partial quotient $\pm\sqrt{-2}$ for $d=2$, and to $\pm \omega_{7}$ and $\pm \overline{\omega}_{7}$ for $d=7$. 
%%%%%%%%%%%%%%%%%%%%%%%%%%%%%%%%%%%%
%%%%%%%%%%%%%%%%%%%%%%%%%%%%%%%%%%%%%%

\begin{proposition}\label{prop:some-d}
For $d \in\{2, 3, 11\}$ suppose that $z = [a_{0}; a_{1}, a_{2}, \ldots ]$, $a_{n} \in \mathbb Z[\omega_{d}]$ for $n \ge 0$, is geodesic. Then 
$|a_{n}| = \sqrt{3}$, $a_{n+1} = - \overline{a_{n}}$, $a_{n+2} = a_{n}$ implies that
$a_{n+3} \ne -\overline{a_{n}}$. 
\end{proposition}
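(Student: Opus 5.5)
The plan is to show directly that the segment of the walk in $\mathcal{E}_d$ belonging to the partial quotients $a_n,-\overline{a_n},a_n,-\overline{a_n}$ can be short-cut. The tools are Theorem~\ref{th:correspondencePathCF} and the fact that elements of $\mathrm{PSL}(2,\mathbb{Z}[\omega_d])$ preserve adjacency in $\mathcal{E}_d$, as in the proof of Proposition~\ref{any-d}. Write $a=a_n$, so $|a|^2=3$, and let $\frac{p_k}{q_k}$ be the convergents, with the usual convention $\frac{p_{-1}}{q_{-1}}=\infty$. Put $S_\alpha=\begin{pmatrix}\alpha&1\\1&0\end{pmatrix}$ and $M=S_{a_0}S_{a_1}\cdots S_{a_{n-1}}$. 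Then $\frac{p_{k}}{q_{k}}=M S_{a_n}\cdots S_{a_k}(\infty)$ for $k\ge n$, while $M(\infty)=\frac{p_{n-1}}{q_{n-1}}$ and $M(0)=\frac{p_{n-2}}{q_{n-2}}$. Since $\det M=\pm1$, we can multiply $M$ by a unit if necessary (as in the proof of Proposition~\ref{prop:EdCrit}) so that $M$ acts as a Möbius transformation from $\mathrm{PSL}(2,\mathbb{Z}[\omega_d])$. Hence $M^{-1}$ maps the walk $\frac{p_{n-2}}{q_{n-2}}\to\cdots\to\frac{p_{n+3}}{q_{n+3}}$ isomorphically onto a walk in $\mathcal{E}_d$ starting $0\to\infty\to\cdots$.

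Next I compute the normalized vertices from the matrix products, using $a\overline a=3$. We have $S_aS_{-\overline a}=\begin{pmatrix}-2&a\\-\overline a&1\end{pmatrix}$. Then $S_aS_{-\overline a}S_a$ has first column $(-a,-2)^{T}$, and $(S_aS_{-\overline a})^2=\begin{pmatrix}1&-a\\ \overline a&-2\end{pmatrix}$. The normalized walk is therefore
\[
0\to\infty\to a\to \tfrac{2}{\overline a}\to \tfrac{a}{2}\to \tfrac{1}{\overline a},
\]
of length $5$ from $0$. The last step is to observe that $0=\frac01$ and $\frac{1}{\overline a}$ satisfy $|0\cdot\overline a-1\cdot1|=1$, so by Proposition~\ref{prop:EdCrit} the edge $0\to\frac1{\overline a}$ lies in $\mathcal{E}_d$. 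Here $1,\overline a$ are coprime, since $|\overline a|^2=3$ is prime in $\mathbb{Z}$ and $1$ is a unit anyway.

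Applying $M$, we conclude that $\frac{p_{n-2}}{q_{n-2}}\to\frac{p_{n+3}}{q_{n+3}}$ is an edge of $\mathcal{E}_d$. Hence the walk $\infty\to\frac{p_0}{q_0}\to\cdots\to\frac{p_{n-2}}{q_{n-2}}\to\frac{p_{n+3}}{q_{n+3}}$ is strictly shorter than the walk through $\frac{p_{n-1}}{q_{n-1}},\dots,\frac{p_{n+2}}{q_{n+2}}$. This contradicts the assumption that the expansion is geodesic. If one prefers to phrase this as an identity of continued fractions, as in Proposition~\ref{any-d}, the shortcut corresponds by Theorem~\ref{th:correspondencePathCF} to replacing $a_{n-1},a_n,\dots,a_{n+3}$ by a single suitable partial quotient, namely $a_{n-1}+\overline a$ up to sign conventions. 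For $n=1$ the same argument works with $\frac{p_{-1}}{q_{-1}}=\infty$.

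The only delicate point I expect is bookkeeping: making sure the indices match, so that $M(0)$ really is the vertex two steps before, and handling the unit normalization of $M$ when $\det M=-1$. Everything else is a direct $2\times2$ matrix computation. Finally, the hypothesis $d\in\{2,3,11\}$ only serves to guarantee that elements with $|a|=\sqrt3$ exist in $\mathbb{Z}[\omega_d]$.
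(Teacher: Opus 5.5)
Your proof is correct and is essentially the paper's argument. The paper just records the identity $[a_0;a_1,\ldots,a_{n-1},\eta,-\overline{\eta},\eta,-\overline{\eta}]=[a_0;a_1,\ldots,a_{n-2},a_{n-1}+\overline{\eta}]$, and through Theorem~\ref{th:correspondencePathCF} this is exactly your shortcut edge $\frac{p_{n-2}}{q_{n-2}}\to\frac{p_{n+3}}{q_{n+3}}$. One small correction: for $d\in\{2,3,11\}$ no unit $\xi$ satisfies $\xi^2=-1$, so rescaling $M$ by a unit cannot turn $\det M=-1$ into $1$. You don't need that step, because $p_{n-2}q_{n+3}-p_{n+3}q_{n-2}=\det M\cdot(0\cdot\overline{a}-1\cdot 1)=\pm1$, and Proposition~\ref{prop:EdCrit} then gives the edge directly.
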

\begin{proof} 
Because $[a_0;a_{1}, a_{2}, \ldots, a_{n-1}, \eta, -\overline{\eta}, \eta, -\overline{\eta}] = 
[a_0;a_{1}, a_{2}, \ldots, a_{n-1} + \overline{\eta}] $ holds for $|\eta| = \sqrt{3}$, the result follows. 
\end{proof}

Proposition~\ref{prop:some-d} can be applied to the partial quotients $\pm(1 + \sqrt{-2})$ and $\pm(1 - \sqrt{-2})$ for $d=2$,  to $\pm(\omega_{3}+1)$, $\pm \sqrt{-3}$, and $\pm( \overline{\omega}_{3} + 1)$ for 
$d=3$, and to $\pm \omega_{11}$ and $\pm\overline{\omega}_{11}$ for $d =11$.  
%%%%%%%%%%%%%%%%%%%%%%%%%%%%%%%%%%
In \cite{H-2} it is claimed that for $d=1$ the restrictions in Propositions~\ref{any-d} and~\ref{prop:some-d} are also sufficient for an expansion to be geodesic.  This might be true also for $d= 2, 3$, however, by the following example, for $d=7$ these conditions are not sufficient.
%%%%%%%%%%%%%%%%%%%%%%
%%%%%%%%%%%%%%%%%%%%%%
\begin{example}[A subtle nongeodesic path for the case $d=7$]\label{ex:exd7}
\, Consider the six elements $v_1=0$, $v_2=\frac{1 +  \sqrt{-7}}4 $, $v_3=\frac{7 + 3 \sqrt{-7}}{14} $, $v_4=\frac{25 + 11 \sqrt{-7}}{46} $, $v_5=\frac{9(7 + 3 \sqrt{-7})}{112}$, $v_6=\frac{61 + 26 \sqrt{-7}}{107}$ of $\mathbb{Q}(\sqrt{-7})$. These elements correspond to the six black dots in Figure~\ref{fig:7reflection6}. It can be seen from this figure that these points form the vertices of the path $P_1: \infty\to v_1\to v_2\to v_3\to v_4\to v_5\to v_6$ in $\mathcal{E}_7$. According to Theorem~\ref{th:correspondencePathCF}, the path $P_1$ corresponds to a continued fraction expansion, namely, we get
\begin{equation}\label{eq:nonGeoEx}
v_6 = [0;-\omega_7+1,-\omega_7+2, \omega_7-2,2\omega_7-1,\omega_7].
\end{equation}
The point $v_6$ corresponds to the rightmost of the six points in Figure~\ref{fig:7reflection6}. It can be seen from Figure~\ref{fig:7reflection6} that $v_6$ can be reached faster by the path  $P_2:\infty\to 1\to \frac{3 + \sqrt{-7}}4\to \frac{7 + 3 \sqrt{-7}}{12}\to v_6$ in $\mathcal{E}_7$. Thus the continued fraction expansion in \eqref{eq:nonGeoEx} is nongeodesic. More precisely, we get by applying  Theorem~\ref{th:correspondencePathCF} to $P_2$ that
\[
v_6=[1;-\omega_7,-2 \omega_7 - 1,1 - 2 \omega_7]=[0;-\omega_7+1,-\omega_7+2, \omega_7-2,2\omega_7-1,\omega_7].
\]

Thus the string $(-\omega_7+1,-\omega_7+2, \omega_7-2,2\omega_7-1,\omega_7)$ is forbidden (non-geodesic).  
\end{example}
%%%%%%%%%%%%%%%%%%%%%% 

We end this section with a result on geodesicness of reversed expansions.

\begin{proposition}\label{prop:reverse}
If $R = [a_{1},a_{2},\ldots,a_{n}]$ is a  geodesic continued fraction expansion, then also $R^{*} = [a_{n}, a_{n-1},\ldots,a_{1}]$
is a geodesic continued fraction expansion.
\end{proposition}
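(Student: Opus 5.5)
Following the classical argument of \cite{B-H-S}, the plan is to realize the walk of $R^*$ as the image of the walk of $R$, traversed backwards, under a single element of $\mathrm{PSL}(2,\mathbb{Z}[\omega_d])$ (up to a unit factor as in the proof of Proposition~\ref{prop:EdCrit}). Since such maps are graph automorphisms of $\mathcal{E}_d$, they preserve geodesic paths. Reversing the direction of a geodesic path also gives a geodesic path, because $\alpha\to\beta\in\mathcal{E}_d$ if and only if $\beta\to\alpha\in\mathcal{E}_d$.

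First, I will use $S_\alpha=\begin{pmatrix}\alpha&1\\1&0\end{pmatrix}$ as in the proof of Theorem~\ref{th:correspondencePathCF}. For $R=[a_1,\ldots,a_n]=[0;a_1,\ldots,a_n]$, the walk is
\[
\infty\to 0 \to S_0S_{a_1}(\infty)\to\cdots\to S_0S_{a_1}\cdots S_{a_n}(\infty)=R .
\]
Write $M=S_0S_{a_1}\cdots S_{a_n}$. Its vertices are $v_{-1}=\infty$, $v_0=0=M_0(\infty)$ and $v_k=M_k(\infty)$, where $M_k=S_0S_{a_1}\cdots S_{a_k}$. Transposition reverses the product, and each $S_\alpha$ is symmetric, so $M^{T}=S_{a_n}\cdots S_{a_1}S_0$. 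For $R^*=[0;a_n,\ldots,a_1]$ the matrices are $M^*_k=S_0S_{a_n}\cdots S_{a_{n-k+1}}$.

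The key step is to relate the two walks. Put $N=S_0M^T=S_0S_{a_n}\cdots S_{a_1}S_0=M^*_nS_0$. Then $N(\infty)=M^*_n(0)=M^*_{n-1}(\infty)$ and $N(0)=M^*_n(\infty)=R^*$. I expect the general pattern to be the following. The vertices of the $R^*$-walk are the images, under a fixed Möbius map $\Phi$ built from $M^T$, of the vertices of the $R$-walk in reverse order, so that $\Phi(v_{n-k})$ corresponds to $v^*_{k-1}$, with appropriate index shifts at the endpoints $\infty$, $0$, $R$. Concretely, I would use the identity
\[
M_k^{-1}M = S_{a_{k+1}}\cdots S_{a_n},
\]
which gives $v_k = M(M^{-1}M_k)(\infty)$. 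Transposing $(M_k^{-1}M)^T=S_{a_n}\cdots S_{a_{k+1}}$ expresses it through $M^*_{n-k}$. The point is that $A\mapsto (A^{T})^{-1}$ composed with the fixed conjugation by $\begin{pmatrix}0&1\\-1&0\end{pmatrix}$ turns the tail products of the $R$-expansion into the head products of the $R^*$-expansion. Since $\det S_\alpha=-1$, I will multiply by $\omega_1$ or $\omega_3$-type units or use the sign trick where needed to land in $\mathrm{PSL}$; the vertex positions are unaffected.

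With this, the length comparison runs as follows. Suppose the walk $\infty\to0\to v^*_1\to\cdots\to R^*$ has length $n+1$ but is not geodesic. Then there is a shorter walk from $\infty$ to $R^*$, and hence from the set $\{\infty,0\}$ to $R^*$. Applying $\Phi^{-1}$ and reversing it yields a walk from $\infty$ to $R$ of length at most $n$. This contradicts the geodesicity of $R$. The main obstacle is bookkeeping the endpoints $\infty$ and $0$, since $\infty\to0$ is the first edge in both walks and $\Phi$ must match $\{\infty,0\}$ with $\{R,v_{n-1}\}$ and vice versa. If the direct identification fails, I will fall back to a cleaner statement: the edge count of a geodesic $\infty\to0\to\cdots\to x$ equals $1$ plus the distance from $0$ to $x$, using $\infty\sim0$. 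I would then work with paths between $0$ and $x$, where the reversal symmetry via $N$ is exact because $N$ swaps $\infty$ and the endpoints appropriately.
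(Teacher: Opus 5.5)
Your strategy is the paper's: a single adjacency-preserving M\"obius map carries the walk of $R$, read backwards, onto the walk of $R^*$, and a shorter walk to $R^*$ is then pulled back to a shorter walk from $\infty$ to $R$. Your final length comparison is fine. The gap is that the decisive step, the existence of $\Phi$ and the vertex correspondence, is only conjectured ("I expect"). Moreover, the mechanism you offer for it is false. With $J=\begin{pmatrix}0&1\\-1&0\end{pmatrix}$ one has $(A^{T})^{-1}=\pm J^{-1}AJ$ whenever $\det A=\pm1$. So transpose-inverse followed by conjugation by $J$ returns $\pm A$, which is the same M\"obius map, and it turns no tail product into a reversed head product.

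What is actually needed is that inverting a tail flips the signs of the partial quotients. Since $S_a^{-1}=S_0S_{-a}S_0$, we get $(S_{a_{k+1}}\cdots S_{a_n})^{-1}=S_0S_{-a_n}\cdots S_{-a_{k+1}}S_0$. Combined with $v_k=M(S_{a_{k+1}}\cdots S_{a_n})^{-1}(\infty)$, this yields $M^{-1}(R)=\infty$, $M^{-1}(v_{n-1})=0$, and $M^{-1}(v_k)=[0;-a_n,\ldots,-a_{k+2}]$ for $0\le k\le n-2$, together with $M^{-1}(\infty)=-R^*$. Thus $M^{-1}$ maps $\infty\to0\to\cdots\to R$ onto the reversed walk of $-R^*=[-a_n,\ldots,-a_1]$. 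This is exactly the paper's iteration of $z\mapsto 1/z-a_k$ followed by an inversion, and it is why the paper passes through $-R^*$. The map you want is $\Phi(z)=-M^{-1}(z)$, which equals $N(-1/z)$ with your $N=S_0M^T$. It satisfies $\Phi(v_k)=v^*_{n-1-k}$; in particular $\Phi(\infty)=R^*$, $\Phi(R)=\infty$, $\Phi(0)=v^*_{n-1}$, and $\Phi(v_{n-1})=0$. Once this is written out, the endpoint bookkeeping is settled. Your fallback would not have helped: the graph-distance identity $\mathrm{dist}(\infty,x)=1+\mathrm{dist}(0,x)$ holds precisely when some geodesic from $\infty$ to $x$ passes through $0$, which is what has to be shown.

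A smaller point concerns determinants. Your plan to rescale determinant $-1$ matrices into $\mathrm{PSL}(2,\mathbb{Z}[\omega_d])$ fails for $d\ne1$. The number $-1$ is the square of a unit only in $\mathbb{Z}[i]$, and the sign trick does not change the determinant. Rescaling a single column, as in the proof of Proposition~\ref{prop:EdCrit}, does fix the determinant, but it changes the M\"obius map and hence moves the vertices of the walk. No rescaling is needed. Any matrix over $\mathbb{Z}[\omega_d]$ with unit determinant preserves coprimality and multiplies $p_1q_2-p_2q_1$ by a unit, so by Proposition~\ref{prop:EdCrit} it acts as an automorphism of $\mathcal{E}_d$. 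The paper uses this tacitly for $z\mapsto 1/z$ and $z\mapsto -z$.
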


\begin{proof}
By Proposition~\ref{any-d} we may assume that $|a_{1}| \ne 1$. We consider the geodesic path 
\begin{equation}\label{eq:path1}
\infty \to 0 \to [a_1] \to [a_{1},a_2] \to \cdots \to[a_{1},a_{2},\ldots 
a_{n-1},a_{n}]
\end{equation}
in $\mathcal{E}_d$ which corresponds to the continued fraction expansion 
$R = [a_{1},a_{2},\ldots,a_{n}]$ by Theorem~\ref{th:correspondencePathCF}.
Now consider $R^{-1}-a_1= [a_{2},\ldots,a_{n}]$. If we apply inversion and subtraction of $a_1$ to the walk in \eqref{eq:path1}, then we obtain the walk
\[
-a_{1} \to \infty \to 0 \to [a_2] \to [a_{2},a_{3}] \to \cdots \to[a_{2},a_{3},\ldots 
a_{n-1},a_{n}]
\]
in $\mathcal{E}_d$. Continuing in the same way, we have 
\[
-\left( a_{2}  + \tfrac{1}{a_{1}}\right) \to -a_{2} \to \infty \to 0  \to [a_3] \to [a_{3},a_{4}] \to \cdots \to[a_{3},a_{4},\ldots 
a_{n-1},a_{n}],
\] 
and repeating this for $n$ times we end up with 
\[
-[a_n;a_{n-1},\ldots, a_1] \to -[a_n;a_{n-1},\ldots, a_2] \to -[a_n;a_{n-1}] \to -a_n \to \infty \to 0.
\]
Taking inverses, this yields 
\[
-[a_n,a_{n-1},\ldots, a_1] \to -[a_n,a_{n-1},\ldots, a_2] \to -[a_n,a_{n-1}] \to -[a_n] \to 0 \to \infty.
\]
Suppose that there exists a shorter walk from $R^{*}$ to $\infty$, or, equivalently a shorter walk from $-R^{*}$ to $\infty$, {\it i.e.}, a walk $-R^{*} \to t_{1} \to t_{2} \to \cdots \to t_{\ell-1} \to \infty$ with $\ell < n$.  There exists a M\"obius transformation $L$ such that $L(-R^{*})= \infty$ and $L(\infty)=R$. Applying this map to $-R^{*} \to t_{1} \to \cdots \to t_{\ell-1} \to \infty$ yields the walk $\infty \to L(t_{1}) \to \cdots \to L(t_{\ell-1}) \to R$. This contradicts to the assumption of \eqref{eq:path1} being a geodesic path.
\end{proof}

Let $[a_1,a_2,a_3,\ldots]$ be a continued fraction expansion. Proposition~\ref{prop:reverse} is of interest if one studies the pairs 
$([a_{n}, a_{n-1}, \ldots, a_{1}], [a_{n+1}, a_{n+2}, \ldots])$, $n\ge 1$,
which play an important role in various contexts in the theory of continued fractions, both for real numbers and for complex numbers.  In particular, in the construction of the natural extension of a continued fraction map, Proposition~\ref{prop:reverse} shows that a geodesic expansion remains geodesic in the natural extension. Here, the natural extension is defined on the closure of $\{([a_{n}(z), a_{n-1}(z), \ldots, a_{1}(z)], [a_{n+1}(z), a_{n+2}(z), \ldots]) : n \ge 1, z \in U\}$, where $U$ is the domain of the underlying continued fraction map. For example, we refer to  Ei {\em et al.} \cite{E-N-N} in the case of the nearest integer complex continued fraction maps $T_d$.

\section{Large partial quotients imply geodesicness}

In the present section we show that large partial quotients always give rise to geodesic continued fraction expansions. 
Let $\pi:\mathbb{H}^3\to\mathbb{C}:\, (z,x)\mapsto z$ be the projection from $\mathbb{H}^3$ to the complex plane~$\mathbb{C}$.

\begin{theorem}\label{th:largea_kgeodesic}
Let $d\in\{1,2,3,7,11\}$ and let $x=[a_1,\ldots, a_n]$ be a continued fraction expansion of $x$ satisfying $a_\ell\in\mathbb{Z}[\omega_d]$ with $|a_\ell| \ge  4.17209$ for each $\ell\in\{1,\ldots,n\}$. Then $[a_1,\ldots, a_n]$ is the unique geodesic continued fraction expansion of $x$.
\end{theorem}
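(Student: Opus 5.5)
The plan is to reduce the statement to a geometric ``shrinking nested regions'' argument and then apply induction. Via Theorem~\ref{th:correspondencePathCF}, the expansion $[a_1,\ldots,a_n]$ corresponds to the walk $\infty\to 0\to \frac{p_1}{q_1}\to\cdots\to\frac{p_n}{q_n}=x$ of length $n+1$. Every walk from $\infty$ to $x$ corresponds to some expansion. It therefore suffices to show two things: every walk from $\infty$ to $x$ has length at least $n+1$, and the only walk of length exactly $n+1$ is the one above. I will argue by induction on $n$, normalizing with the M\"obius maps $S_{a}^{-1}$ exactly as in the proof for the nearest integer algorithms. After applying $z\mapsto 1/z - a_1$, a walk $\infty\to 0\to\cdots$ from $\infty$ to $x$ becomes a walk from $0$ through $\infty$ towards $[a_2,\ldots,a_n]$. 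So the core claim is a ``one-step'' statement about the first vertex after $\infty$.

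The key lemma I aim for is the following. Let $D$ be the disc of Euclidean radius $r$ about $0$, and write $y=[a_1,\ldots,a_n]$ with all $|a_\ell|\ge c:=4.17209$. Then $y$ lies in a small disc $|y|\le \rho$ with $\rho=1/(c-\rho)$, so $\rho<1/4$ roughly. Moreover, every vertex $v\neq 0$ adjacent to $\infty$, i.e.\ $v\in\mathbb{Z}[\omega_d]$, is ``far'' from $y$. The heart of the matter is then to show that any walk starting at a vertex $u\notin$ (a suitable neighborhood $N$ of $0$) and ending at $y$ must pass through $0$. The condition to verify is that the region
\[
N=\{z: |z|<\rho'\}
\]
is ``closed'' in the Farey graph, in the following sense: any edge from outside $N$ into $N\cap(\text{deep region})$ must land on $0$, or else cross the Ford sphere of $0$ in a way forbidden by tangency. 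I would phrase this with generalized Ford spheres. Vertices $p/q$ with large $|q|$ have small Ford spheres. Since an edge requires tangency of Ford spheres, any neighbor of a point inside the deep region $\{|z|<\rho\}$, other than $0$ itself, must have a Ford sphere that fits between the Ford sphere $\mathcal F(0)$ (the unit-diameter sphere at $0$) and the point. Such a neighbor therefore lies in the annular region and has large denominator.

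The steps, in order, are as follows.
\begin{enumerate}
\item Compute $\rho$ and show that all tails $[a_k,\ldots,a_n]$ lie in $|z|\le\rho$.
\item Show that for $|a|\ge c$, the image under $z\mapsto 1/(a+z)$ of $\{|z|\le\rho\}$ stays inside $\{|z|\le \rho\}$, and that it is disjoint from the images for other $a'$ together with their Ford-sphere shadows.
\item Prove a separation statement: the only vertices $v$ adjacent to $\infty$ such that some path $v\to\cdots\to y$ of length $\le n$ exists are $v=a_1$. Here I argue by induction, with the inductive hypothesis being that the minimal distance from $\infty$ to a point of the deep set of level $n$ is exactly $n+1$, attained only through $0$ and then $1/a_1$.
\end{enumerate}
The specific constant $4.17209$ should arise as the threshold at which the Ford sphere of a competing vertex $\frac{1}{a'}$ with $|a'-a_1|$ small can still touch spheres in the shadow of $1/(a_1+\{|z|\le\rho\})$. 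Presumably it is the root of a polynomial equation balancing $\rho$ against the radii $1/(2|q|^2)$.

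I expect the main obstacle to be step 3, namely excluding ``shortcut'' walks that leave the deep region and re-enter it. I would handle these by showing that the union $U_n$ of Ford spheres of all vertices within distance $k$ of $y$ inside the deep region is cut off from the outside by the single sphere $\mathcal F(1/a_1)$ (a horoball ``gate''). Any walk reaching $y$ must then pass through the gate vertex, and minimality follows inductively. Uniqueness follows because the gate at each level is a single vertex.
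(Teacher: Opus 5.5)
There is a genuine gap. Your reduction is fine: by Theorem~\ref{th:correspondencePathCF}, induction on $n$ and the normalization $z\mapsto 1/z-a_1$, everything comes down to your step~3. (Under that map the walk starts at $-a_1$, not at $0$, but this is harmless.) Step~3 says that every $v\in\mathbb{Z}[\omega_d]\setminus\{0\}$ is at graph distance greater than $n$ from $x$ in $\mathcal{E}_d$. That is the whole content of the theorem, and the mechanism you propose for it is false.

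Your ``heart of the matter'' claim is that a walk from a vertex $u$ outside a neighbourhood of $0$ to $y$ must pass through $0$. It already fails in the rational Farey graph, which is a subgraph of every $\mathcal{E}_d$. Take $y=\frac{5}{26}=[5,5]$:
\begin{itemize}
\item the walk $1\to\frac12\to\frac13\to\frac14\to\frac15\to\frac5{26}$ avoids $0$;
\item the walk $\infty\to0\to\frac16\to\frac2{11}\to\frac3{16}\to\frac4{21}\to\frac5{26}$ avoids your gate vertex $\frac1{a_1}=\frac15$.
\end{itemize}
Every step has $|ps-qr|=1$, and $\infty\to1$ followed by the first walk is internally vertex-disjoint from the second. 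So no single vertex or horoball ``gate'' separates $\infty$ from $y$. Competing walks can approach $y$ from several directions, and what must be shown is that such detours are \emph{long}, not that they are impossible.

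The closedness of $N$ also fails for the natural choice $N=\{|z|<\rho\}$. For $d=1$, $\frac1{2+3i}\to\frac1{3+3i}$ is an edge, and $|3+3i|=\sqrt{18}>4.17209$. Yet $|\frac1{2+3i}|=\frac1{\sqrt{13}}>\rho\approx0.2553$ (incidentally $\rho>\frac14$). Your steps~1--2 only describe nested discs containing the \emph{convergent} walk. They say nothing about an arbitrary competing walk, whose vertices need not lie in $N$ or in the deep set.

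The missing idea is a step-by-step comparison of an arbitrary competing walk with the convergent walk, run backwards from $x$. The paper's Lemma~\ref{lem:circleGeometry} states: if Ford spheres $G_1,G_2$ with radii $\rho_1>\rho_2$ have base points at distance at most $(2-\sqrt2)\rho_1$, then every Ford sphere tangent to $G_2$ projects into $\pi(G_1)$.

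Let $F_k$, $F'_k$ be the Ford spheres of the convergents of $[a_1,\ldots,a_n]$ and of a competing expansion $[b_1,\ldots,b_m]$. One proves $\pi(F'_{m-k})\subset\pi(F_{n-k})$ by induction on $k$. The hypothesis for $k$ and Pythagoras give $\bigl|\frac{p_{n-1-k}}{q_{n-1-k}}-\frac{p'_{m-k}}{q'_{m-k}}\bigr|\le\frac{1}{|q_{n-1-k}q_{n-k}|}+\frac{1}{2|q_{n-k}|^2}$. The growth $|q_{\ell+1}|\ge\lambda|q_\ell|$, with $\lambda=C-\frac1{C-1}$, turns this into the lemma's hypothesis exactly when $\frac2\lambda+\frac1{\lambda^2}<2-\sqrt2$. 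That is where $4.17209$ comes from, not from a competing vertex $\frac1{a'}$.

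The nesting gives $|q'_{m-k}|\ge|q_{n-k}|>1$ for $k<n$. So the last $n$ vertices of any walk to $x$ are non-integers, which forces $m\ge n$; uniqueness comes from the strict version of the nesting. Your outline has nothing playing the role of this lemma or of the backward induction, so step~3, and with it the theorem, remains unproved.
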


The theorem is proved by the following two lemmas.

\begin{lemma}\label{lem:circleGeometry}
Let $G_1=\mathcal{F}_d(a_1)$ and $G_2=\mathcal{F}_d(a_2)$ be two (not necessarily tangent) Ford spheres of radius $\rho_1$ and $\rho_2$, respectively. Suppose that $\rho_1 > \rho_2$ and assume that 
\begin{equation}\label{eq:distanceBasePointEstimate}
|a_1-a_2| \le (2-\sqrt{2}) \rho_1. 
\end{equation}
Let $G=\mathcal{F}_d(a)$ be a Ford sphere tangent  to $G_2$. Then $\pi(G) \subset \pi(G_1)$.
\end{lemma}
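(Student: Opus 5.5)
The plan is to reduce everything to Euclidean inequalities between base points and radii. We use the fact that any two distinct Ford spheres $\mathcal{F}_d(\frac pq)$, $\mathcal{F}_d(\frac rs)$ have disjoint interiors. This holds because they touch iff $|ps-qr|=1$ and are disjoint otherwise: the tangency criterion for generalized Ford spheres stated after the definition of Ford spheres, together with $|ps-qr|\ge 1$ for distinct cusps in $\mathbb{Z}[\omega_d]$. For two Ford spheres with base points $b,b'$ and radii $r,r'$, disjoint interiors means $|b-b'|^2+(r-r')^2\ge (r+r')^2$, i.e. $|b-b'|^2\ge 4rr'$, with equality exactly in the tangent case. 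Let $\rho$ be the radius of $G$. Since $\pi(G)$ is the disc of radius $\rho$ about $a$, the goal is the inequality $|a-a_1|+\rho\le\rho_1$.

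If $G=G_1$ there is nothing to prove. $G=G_2$ is impossible, since $G$ is tangent to $G_2$. So we may assume $G,G_1,G_2$ pairwise distinct. We then record three facts: $|a-a_2|^2=4\rho\rho_2$ from tangency; $|a-a_1|^2\ge 4\rho\rho_1$ from disjointness of $G$ and $G_1$; and $|a_1-a_2|^2\ge 4\rho_1\rho_2$ from disjointness of $G_1$ and $G_2$. Normalize by setting $t=\sqrt{\rho/\rho_1}$ and $s=\sqrt{\rho_2/\rho_1}$. The third fact combined with \eqref{eq:distanceBasePointEstimate} gives $2s\le 2-\sqrt2$, i.e. $s\le 1-\tfrac1{\sqrt2}$.

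Next we bound $t$. By the triangle inequality, $|a-a_1|\le 2\sqrt{\rho\rho_2}+(2-\sqrt2)\rho_1$. Combining this with the second fact yields $2t\le 2ts+(2-\sqrt2)$, hence $t\le \frac{2-\sqrt2}{2(1-s)}$. Finally, the target inequality follows from $|a-a_1|+\rho\le 2\sqrt{\rho\rho_2}+(2-\sqrt2)\rho_1+\rho$, and it therefore suffices to show
\[
2ts+t^2\le \sqrt2-1 .
\]
The left side is increasing in both $t$ and $s$, and the upper bound for $t$ is increasing in $s$. Hence the worst case is $s=1-\frac1{\sqrt2}$, $t=\sqrt2-1$. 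There one computes $2ts=3\sqrt2-4$ and $t^2=3-2\sqrt2$, whose sum is exactly $\sqrt2-1$. This proves the claim and shows the constant $2-\sqrt2$ is sharp for this argument.

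The main point to watch is the disjointness of distinct Ford spheres, which supplies the lower bound $|a-a_1|^2\ge4\rho\rho_1$. This bound is what controls $\rho$; without it a large sphere tangent to $G_2$ could stick out of the cylinder over $G_1$. The remaining steps are the elementary monotonicity computation above.
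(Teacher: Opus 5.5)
Your proposal is correct and takes essentially the same route as the paper. Both arguments use the three horosphere distance relations (tangency of $G$ and $G_2$, and non-overlap of $G$ with $G_1$ and of $G_1$ with $G_2$), derive $\sqrt{\rho_2/\rho_1}\le\frac{2-\sqrt2}{2}$ and then $\sqrt{\rho/\rho_1}\le\sqrt2-1$, and finish by checking $|a-a_1|+\rho\le\rho_1$, with equality in the extremal configuration. The differences are cosmetic: you keep $s$ as a parameter and close with a monotonicity argument instead of substituting $\rho_2\le\frac{(2-\sqrt2)^2}{4}\rho_1$ right away, and you state explicitly why distinct Ford spheres have disjoint interiors (via $|ps-qr|\ge1$), which the paper only asserts.
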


\begin{figure}[h]
\begin{tikzpicture}[scale=0.3]
\node[draw=none,fill=none] at (0,0){\includegraphics[width=.42\textwidth]{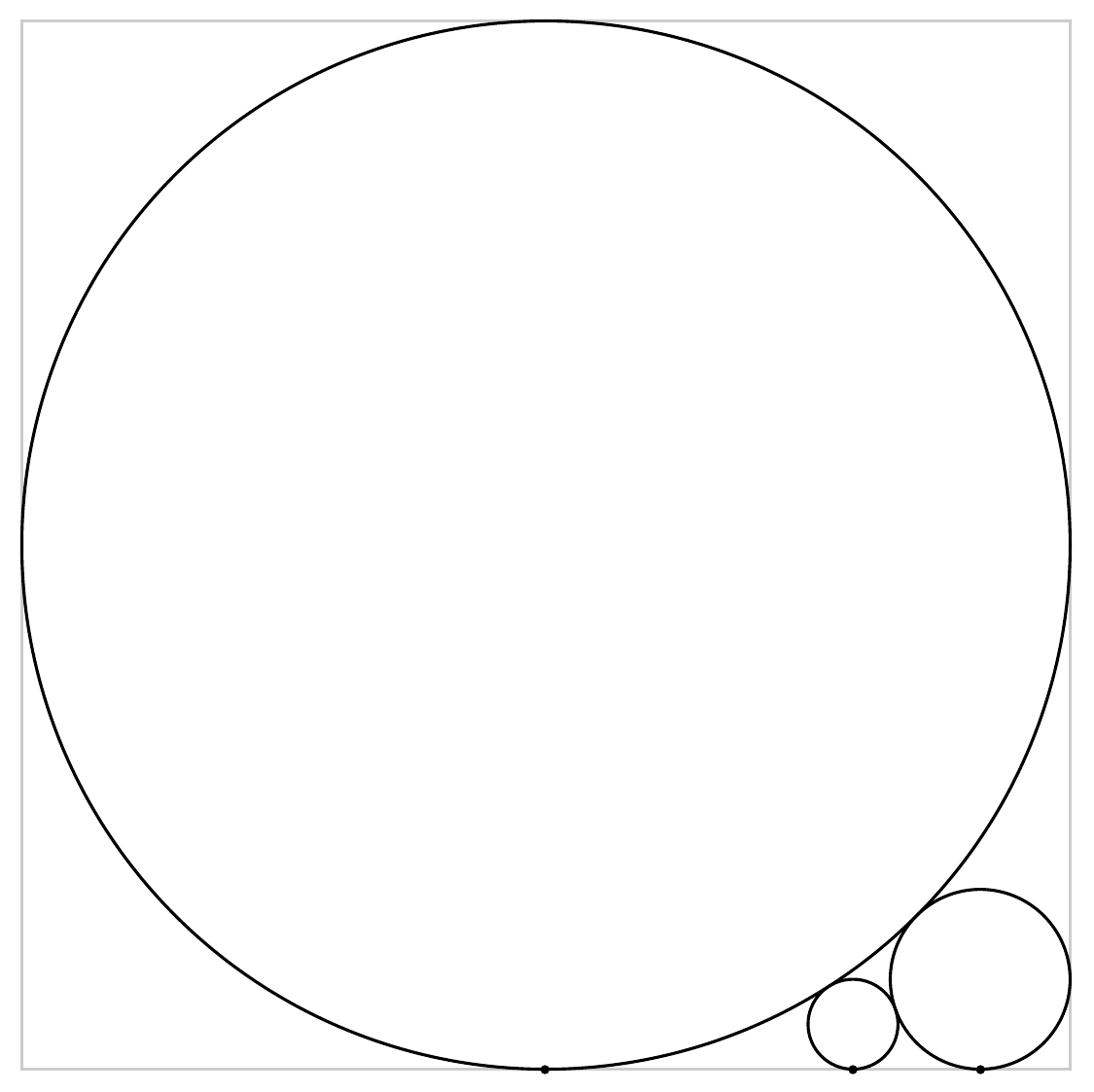}};
\node[draw=none,fill=none] at (20,-0.5){\includegraphics[width=.5\textwidth]{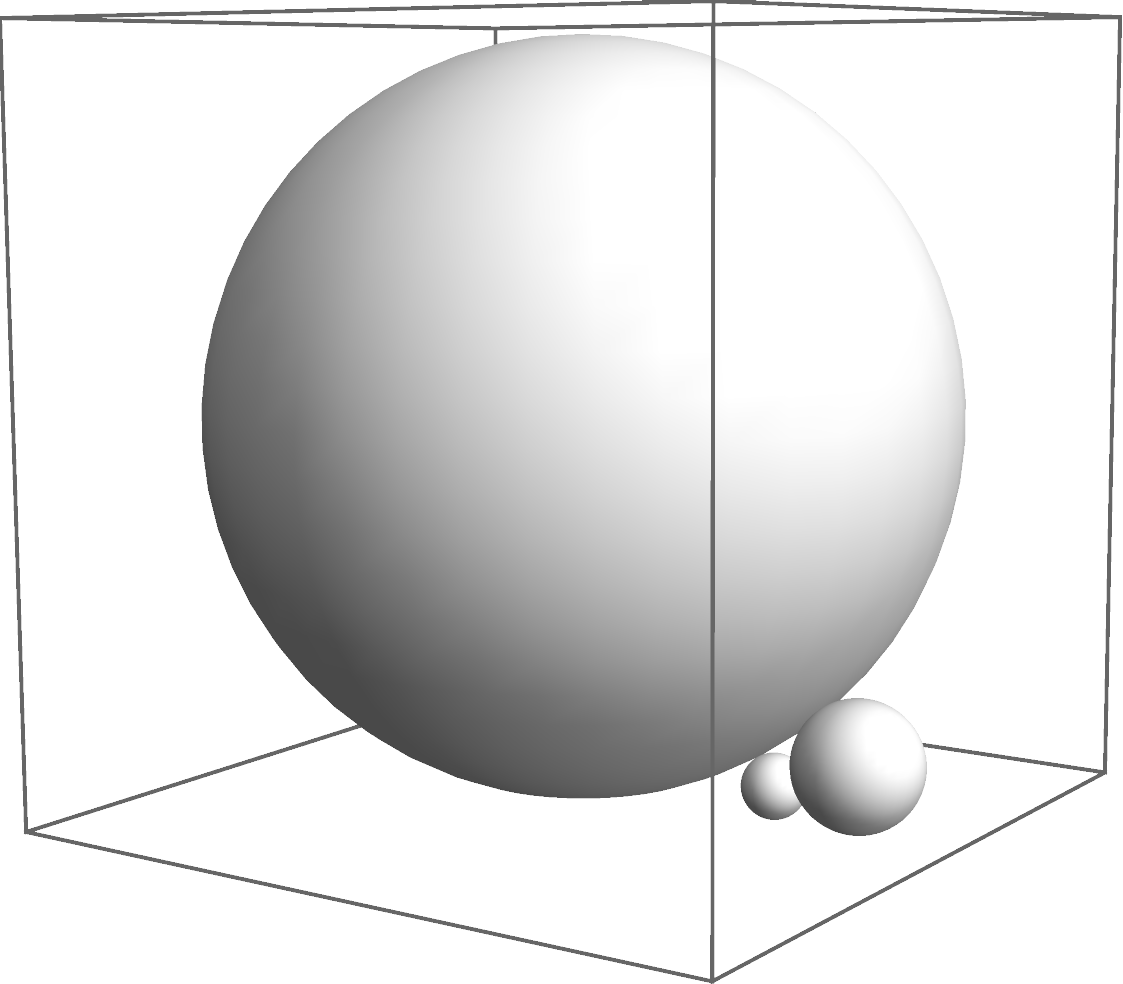}};

\node at (0,0) {$G_1$};
\node at (4.73,-7.4) {$G_2$};
\node at (6.7,-6.8) {$G$};
\node at (0,-9) {$a_1$};
\node at (4.73,-9) {$a_2$};
\node at (6.7,-9) {$a$};

\node at (20.5,1) {$G_1$};
\node at (23.5,-7) {$G_2$};
\node at (25.3,-5.4) {$G$};
\end{tikzpicture}
\caption{An extreme situation in Lemma~\ref{lem:circleGeometry}.
\label{fig:largestC}}
\end{figure}

In Figure~\ref{fig:largestC} the constellation with the largest possible radius of $G$ is shown (provided that $G\not=G_1$). The left hand side shows a 2-dimensional cut, the right hand side gives an impression of the 3-dimensional situation.

\begin{proof}
Set $c=2-\sqrt{2}$ for convenience and let $\rho$ be the radius of $G$. We may assume that $G\not=G_1$ because otherwise the lemma is trivially true. Applying Pythagoras' theorem to the triangle indicated in Figure~\ref{fig:Pythagoras}, we get $|a_1-a_2| \ge 2\sqrt{\rho_1\rho_2}$ (note that $G_1$ and $G_2$ are disjoint or tangent; equality only holds if $G_1$ and $G_2$ are tangent). Therefore $\rho_2\le \frac{|a_1-a_2|^2}{4\rho_1}$, and, hence,  \eqref{eq:distanceBasePointEstimate} yields that $\rho_2 \le \frac{c^2}4 \rho_1$. 
\begin{figure}[h] 
\begin{center}
\begin{tikzpicture}[scale=0.3]
\node[draw=none,fill=none] at (0,0){\includegraphics[width=.42\textwidth]{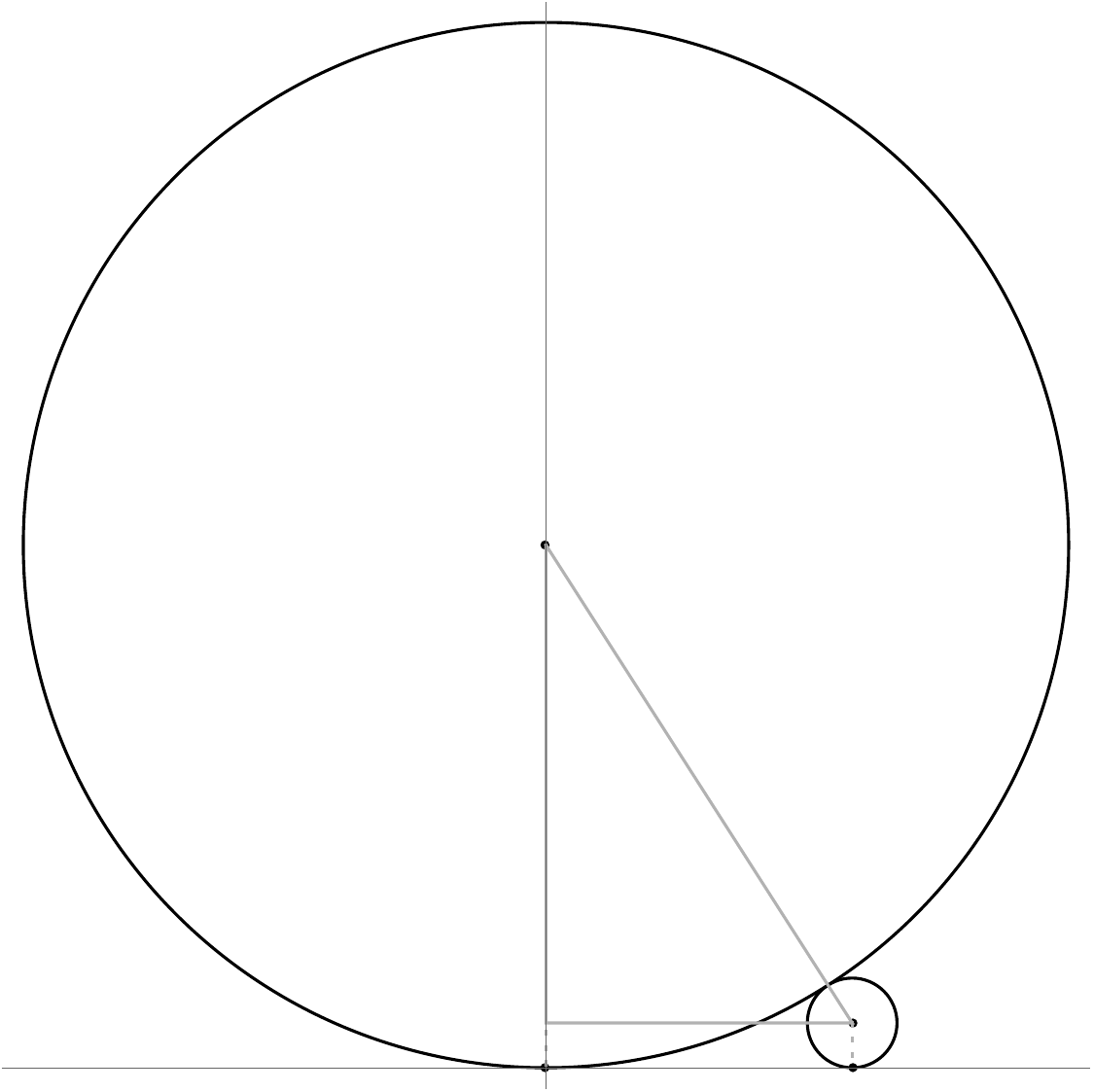}};

\node at (1,1) {$G_1$};
\node at (6.2,-7) {$G_2$};
\node at (0,-9) {$a_1$};
\node at (4.73,-9) {$a_2$};
\end{tikzpicture}
\end{center}
\caption{An application of Pythagoras' theorem.
\label{fig:Pythagoras}}
\end{figure}
Applying Pythagoras' theorem again and using this estimate, we see that, because $G_2$ and $G$ are tangent, the distance between $a_2$ and $a$ satisfies
\[
|a_2 - a| = 2 \sqrt{\rho_2\rho} \le  c \sqrt{\rho_1\rho}.
\]
Thus 
\[
|a_1- a| \le  |a_1-a_2| + |a_2-a| = c(\rho_1 + \sqrt{\rho_1\rho})=c\bigg(1+\sqrt{\frac{\rho}{\rho_1}}\bigg)\rho_1.
\]
 Applying Pythagoras' theorem for a third time, this implies that (the first inequality becomes an equality if $G$ and $G_1$ are tangent)
\[
\rho \le \frac{|a_1-a|^2}{4\rho_1} \le \frac{c^2\bigg(1+\sqrt{\frac{\rho}{\rho_1}}\bigg)^2}4 \rho_1 \quad\text{and, therefore, }\quad
\rho \le \frac{c^2}{(2-c)^2}\rho_1.
\]
By the choice of $c$ we get 
\[
|a-a_1| +\rho \le c\bigg(1+\sqrt{\frac{\rho}{\rho_1}}\bigg)\rho_1 + \frac{c^2}{(2-c)^2}\rho_1
\le c(1+\frac{c}{2-c})\rho_1+ \frac{c^2}{(2-c)^2}\rho_1 = \rho_1.
\]
Therefore, $G$ lies ``below'' $G_1$, or, to be more precise, $\pi(G) \subset \pi(G_1)$, and the proof is finished.
\end{proof}

\begin{lemma}\label{lem:keyStepGeo}
Let $x=[a_1,\ldots, a_n]$ be a continued fraction expansion of $x$ satisfying $a_\ell\in\mathbb{Z}[\omega_d]$ with $|a_\ell| \ge4.17209$ for each $\ell\in\{1,\ldots,n\}$. Let $x=[b_1,\ldots, b_m]$ be another continued fraction expansion of $x$. 
Then $m>n$, {\em i.e.}, $[a_1,\ldots, a_n]$ is the unique geodesic expansion of $x$.
\end{lemma}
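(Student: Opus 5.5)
Via Theorem~\ref{th:correspondencePathCF}, the expansion $[a_1,\ldots,a_n]$ is the walk $\infty\to 0\to R_1\to\cdots\to R_n=x$ with $R_k=\frac{p_k}{q_k}=[a_1,\ldots,a_k]$. Any other expansion $[b_1,\ldots,b_m]$ is another walk $\infty\to 0\to S_1\to\cdots\to S_m=x$. (If the expansion carries an $a_0$, we first translate by $-a_0$.) We must show that $m\le n$ forces the two walks to coincide. We argue in the language of Ford spheres: two vertices are adjacent in $\mathcal{E}_d$ exactly when their Ford spheres are tangent.

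The core of the argument is a nesting property for the projected Ford spheres of the convergents. Write $G_k=\mathcal{F}_d(R_k)$, with radius $\rho_k=\frac{1}{2|q_k|^2}$. Since $|a_\ell|$ is large, $|q_{k+1}|\ge(|a_{k+1}|-1)|q_k|$, so $\rho_{k+1}\le\rho_k/(|a_{k+1}|-1)^2$. We also have $|R_k-R_{k+1}|=\frac{1}{|q_kq_{k+1}|}=2\sqrt{\rho_k\rho_{k+1}}$.

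The first goal is
\[
\pi(G_{k+1})\subset\pi(G_k)
\]
together with the statement that every Ford sphere tangent to $G_{k+1}$, other than $G_k$ itself, projects into $\pi(G_k)$. For this we will apply Lemma~\ref{lem:circleGeometry} with $G_1=G_k$ and $G_2=G_{k+1}$. Its hypothesis $|R_k-R_{k+1}|\le(2-\sqrt2)\rho_k$ reduces to $2\sqrt{\rho_{k+1}/\rho_k}\le 2-\sqrt2$. This in turn needs only something like $|a_{k+1}|-1\ge 2/(2-\sqrt2)\approx3.414$.

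The constant $4.17209$ presumably comes from a sharper comparison of $|q_{k+1}|/|q_k|$. We will bound this ratio below by $|a_{k+1}|-|q_{k-1}/q_k|$. The tail $|q_{k-1}/q_k|=1/|a_k+q_{k-2}/q_{k-1}|$ is at most the root $r$ of $r=1/(A-r)$ with $A=4.17209$. We will check numerically that $A-r$ meets the required threshold $\ge 2/(2-\sqrt2)$, possibly after adjusting for the fact that the lemma needs a slightly stronger bound to also cover neighbours of neighbours.

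The second step is to iterate this nesting. Any walk of length $j$ that starts at a neighbour of $G_k$ other than $G_{k-1}$ stays with its Ford-sphere projections inside $\pi(G_{k-1})$. Conversely, $x$ lies in the nested disks $\pi(G_1)\supset\pi(G_2)\supset\cdots$.

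The third step is to compare the two walks. Suppose $S_1\neq R_1$. Both lie in $\mathbb{Z}[\omega_d]$, and their Ford spheres are horizontal-tangent spheres of radius $1/2$, which are disjoint in the interior. The walk $S_1\to\cdots\to S_m$ must reach $x\in\pi(G_n)$, which sits deep inside $\pi(G_1)$. By the nesting property, the only way to enter $\pi(G_1)$ with spheres of radius smaller than $\rho_1$ is through a tangency at the top-level sphere or by passing through $G_1$ itself. We will argue by induction on $k$ that any walk from $\infty$ to a point strictly inside $\pi(G_k)$ must use a vertex of a sphere that is at least as large as $G_k$ and meets $\pi(G_k)$, and hence must pass through $R_k$. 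The reason is that $G_k$ is the largest sphere over its disk, and a path cannot jump inside without a tangency. This gives an injective correspondence between steps, so $m\ge n$, with equality only if $S_k=R_k$ for all $k$. Hence $m>n$ for any different expansion.

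The main obstacle is this last step, making rigorous that a walk entering $\pi(G_k)$ must visit $R_k$. The issue is that tangency with $G_{k-1}$ from outside could land inside $\pi(G_k)$ via spheres not comparable to $G_k$. We would first try to handle it using Lemma~\ref{lem:circleGeometry}, which shows that every neighbour of $G_{k+1}$ lies under $G_k$, together with the disjointness of Ford spheres, i.e.\ Corollary~\ref{cor:geocor}.
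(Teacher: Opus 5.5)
Your proposal has a genuine gap, located where you suspected, and it cannot be closed as formulated. The claim in your third step, that every walk from $\infty$ to a point strictly inside $\pi(G_k)$ must visit $R_k$, is false. So is the trapping claim of your second step.

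To see this, let $A$ be a unit multiple of $\begin{pmatrix} p_k & p_{k-1}\\ q_k & q_{k-1}\end{pmatrix}$, so that $A(\infty)=R_k$, $A(0)=R_{k-1}$ and $A(a_{k+1})=R_{k+1}$.
\begin{itemize}
\item Since $0\to\pm1\to\pm2$ is a walk in $\mathcal{E}_d$, the walk $\infty\to\cdots\to R_{k-1}=A(0)\to A(\pm1)\to A(\pm2)$ never visits $R_k$. For a suitable sign, the base point of $A(\pm2)$ lies at distance $2\rho_k/|\pm2+q_{k-1}/q_k|<\rho_k$ from $R_k$, strictly inside $\pi(G_k)$, while its Ford sphere sticks out over the rim of $\pi(G_k)$.
\item More to the point, apply $A^{-1}$ to the convergent walk and replace its segment $0\to\infty\to a_{k+1}$ by any walk from $0$ to $a_{k+1}$ that avoids $\infty$. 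Such walks exist: the walks used earlier in the paper to connect $\mathbb{Z}[\omega_d]$, resp.\ $G_d$, never pass through $\infty$. Applying $A$ again gives walks from $\infty$ to $x$ itself that never visit $R_k$.
\end{itemize}
So the disks $\pi(G_k)$ are not traps, and no ``must pass through'' statement is available. Only a comparison of lengths can be true, and your argument has no mechanism that produces one. There is also a slip: $R_1=1/a_1\notin\mathbb{Z}[\omega_d]$; the integral vertex shared by both walks is $R_0=S_0=0$.

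The missing idea is to compare the two walks from the end rather than from the start. This is the paper's route. Let $S_j=\frac{p'_j}{q'_j}$ be the convergents of $[b_1,\ldots,b_m]$ and $\rho'_j=\frac{1}{2|q'_j|^2}$. Prove by induction on $k$ that $\pi(\mathcal{F}_d(S_{m-k}))\subset\pi(G_{n-k})$ for $0\le k\le n$; the case $k=0$ is trivial since $S_m=R_n=x$.

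For the step, do not apply Lemma~\ref{lem:circleGeometry} to the pair $(G_{n-k-1},G_{n-k})$, as in your first step. Apply it with $G_1=G_{n-k-1}$, $G_2=\mathcal{F}_d(S_{m-k})$ and $G=\mathcal{F}_d(S_{m-k-1})$; here $G_2$ is the competitor's sphere, which is only known to lie under $G_{n-k}$. Note that $\rho'_{m-k}\le\rho_{n-k}<\rho_{n-k-1}$, as the lemma requires. Its hypothesis must then be checked via $|R_{n-k-1}-S_{m-k}|\le 2\sqrt{\rho_{n-k-1}\rho_{n-k}}+\rho_{n-k}$. With $t=|q_{n-k-1}/q_{n-k}|\le\big(C-\frac{1}{C-1}\big)^{-1}$, the requirement becomes $2t+t^2<2-\sqrt2$, and this is exactly what pins down $C=4.17209$. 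The extra summand $\rho_{n-k}$ is the ``neighbours of neighbours'' correction you anticipated.

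Once the nesting is proved, for $k\le n-1$ the radius of $\mathcal{F}_d(S_{m-k})$ is at most $\rho_{n-k}\le\frac{1}{2|a_1|^2}<\frac12$. Hence $S_{m-k}\neq 0=S_0$, so $m-k\ge 1$, which gives $m\ge n$. Uniqueness follows because a strict inclusion at some level propagates down to $k=n$, which is impossible when $m=n$ since $S_0=R_0$. At no point does this argument require the competing walk to pass through any $R_k$.
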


\begin{proof}
For convenience, we set $C=4.17209$. Let $\frac{p_k}{q_k}$ be the convergents of $[a_1,\ldots, a_n]$ and let $\frac{p_k'}{q_k'}$ be the convergents of $[b_1,\ldots, b_m]$.  Define the Ford spheres $F_k=\mathcal{F}_d\big(\frac{p_k}{q_k}\big)$ of radius $\frac1{2|q_k|^2}$ and $F_k'=\mathcal{F}_d\big(\frac{p_k'}{q_k'}\big)$ of radius $\frac1{2|q'_k|^2}$. We claim that $\pi(F_{m-k}') \subset \pi(F_{n-k})$ holds for each $k\le n$. We use induction to prove this claim. The induction start $k=0$ is trivial because $\frac{p_m'}{q_m'}=x=\frac{p_n}{q_n}$ yields  $F_m'=F_n$. Our aim is to use Lemma~\ref{lem:circleGeometry} for the induction step. Suppose that $\pi(F_{m-k}') \subset \pi(F_{n-k})$ holds for some $k$. Then, applying Pythagoras' theorem, we gain  
\begin{equation}\label{eq:diffpkqkest}
\begin{split}
\bigg|\frac{p_{n-1-k}}{q_{n-1-k}} - \frac{p_{m-k}'}{q_{m-k}'}\bigg|
&\le 
\bigg|\frac{p_{n-1-k}}{q_{n-1-k}} - \frac{p_{n-k}}{q_{n-k}}\bigg|
+
\bigg|\frac{p_{n-k}}{q_{n-k}} - \frac{p_{m-k}'}{q_{m-k}'}\bigg| \\
&\le 
2\sqrt{\frac{1}{2|q_{{n-1}-k}|^2} \frac{1}{2|q_{n-k}|^2}}
+
\frac{1}{2|q_{n-k}|^2} - \frac{1}{2|q_{m-k}'|^2}\\
&=
\frac{1}{|q_{{n-1}-k}|} \frac{1}{|q_{n-k}|}
+
\frac{1}{2|q_{n-k}|^2} - \frac{1}{2|q_{m-k}'|^2} \\
&\le \frac{1}{|q_{{n-1}-k}|} \frac{1}{|q_{n-k}|} +
\frac{1}{2|q_{n-k}|^2}.
\end{split}
\end{equation}
Note that $q_{\ell+1}=a_\ell q_\ell + q_{\ell-1}$ and, because $|a_\ell| \ge C$, the sequence $(|q_\ell|)_{\ell \ge 0}$ is monotone and, hence, $|q_{\ell+1}| \ge (C-1) |q_\ell|$. Therefore, $|q_{\ell+1}| \ge C|q_{\ell}| - \frac{|q_\ell|}{C-1} = \big(C-\frac1{C-1}\big)|q_\ell|$.
By the choice of $C$ we now get from \eqref{eq:diffpkqkest} that
\begin{equation*}
\begin{split}
\bigg|\frac{p_{n-1-k}}{q_{n-1-k}} - \frac{p_{m-k}'}{q_{m-k}'}\bigg|
&\le 
\frac{1}{\big(C-\frac{1}{C-1}\big)|q_{{n-1}-k}|^2} + \frac{1}{\big(C-\frac{1}{C-1}\big)^22|q_{{n-1}-k}|^2}
\\&=
\bigg( \frac{2}{C-\frac{1}{C-1}} + \frac{1}{\big(C-\frac{1}{C-1}\big)^2}
\bigg)\frac{1}{2|q_{{n-1}-k}|^2}
 < \frac{2-\sqrt{2}}{2|q_{{n-1}-k}|^2}.
\end{split}
\end{equation*}
Thus we may apply Lemma~\ref{lem:circleGeometry} with $G_1=F_{n-1-k}$, $G_2=F'_{m-k}$ and $G=F'_{m-1-k}$ to obtain $\pi(F_{m-1-k}') \subset \pi(F_{n-1-k})$ and the induction proof is finished.

We proved that $\pi(F'_{m-k}) \subset \pi(F_{n-k})$ holds for each $k \le n$. Thus 
$|q'_{m-k}| \ge |q_{n-k}| > |q_0|=1$ holds for $k\in\{0,\ldots,n-1\}$. Therefore, we need to go $k\ge n$ steps back to reach a Ford circle $F'_{m-k}$ of radius $\frac12$. Hence, $m\ge n$, and thus $[a_1,\ldots, a_n]$ is a geodesic expansion for $x$.

To show uniqueness observe that, if, for some $k$, the inclusion $\pi(F'_{m-k}) \subsetneq \pi(F_{n-k})$ is strict, then $\pi(F'_{m-\ell}) \subsetneq \pi(F_{n-\ell})$ for each $\ell \ge k$ and, hence, $m>n$. This implies that $[a_1,\ldots, a_n]$ is the unique geodesic expansion for $x$.
\end{proof}

Theorem~\ref{th:largea_kgeodesic} follows immediately from Lemma~\ref{lem:keyStepGeo}.
\subsubsection*{Acknowledgments}
The first and second authors were partially supported by JSPS Grants 24K03611 and 25K07035, respectively. The third author was supported by the ANR-FWF grant ``SymDynAr'' (FWF I 6750).

We thank Andrei Zabolotskii for bringing the references \cite{hatcher1983hyperbolic,S-vS-Z} to our attention.

%%%%%%%%%%%%%%%%%%%%%%%% referenc.tex %%%%%%%%%%%%%%%%%%%%%%%%%%%%%%
% sample references
% %
% Use this file as a template for your own input.
%
%%%%%%%%%%%%%%%%%%%%%%%% Springer-Verlag %%%%%%%%%%%%%%%%%%%%%%%%%%
%
% BibTeX users please use
% \bibliographystyle{}
% \bibliography{}
%

\end{document}